\newtheorem{thm}{Theorem}[section]
\newtheorem{defn}[thm]{Definition}
\newtheorem{lem}[thm]{Lemma}
\newtheorem{cor}[thm]{Corollary}
\newtheorem{rem}[thm]{Remark}
\numberwithin{equation}{section}
\newcommand{\norme}[1]{\left\Vert #1\right\Vert}
\newcommand{\abs}[1]{\left\lvert #1\right\rvert}
\begin{document}
\title{Derivation of Yudovich solutions of Incompressible Euler from the Vlasov-Poisson system}
\author{
Immanuel Ben-Porat%
     \thanks{Mathematical Institute, University of Oxford, Oxford OX2 6GG, UK.  {Immanuel.BenPorat@maths.ox.ac.uk}} %
\and
    Mikaela Iacobelli%
     \thanks{ETH Z{\"u}rich, Department of Mathematics,
R{\"a}mistrasse 101, 8092 Z{\"u}rich, Switzerland.  {mikaela.iacobelli@math.ethz.ch}} %
     \and
   Alexandre Rege%
     \thanks{ETH Z{\"u}rich, Department of Mathematics,
R{\"a}mistrasse 101, 8092 Z{\"u}rich, Switzerland.  {alexandre.rege@math.ethz.ch}} %
}
\maketitle
\begin{abstract}
We derive the two dimensional incompressible Euler equation
as a quasineutral limit of the Vlasov-Poisson equation using a modulated energy approach. We propose a strategy which enables to treat
solutions where the gradient of the velocity is merely $\mathrm{BMO}$, in accordance to the celebrated Yudovich theorem.
 \end{abstract}

\section{Introduction }

\subsection{General overview}
In this paper we study the derivation of
the 2D incompressible Euler equation as a quasineutral limit from
the Vlasov--Poisson system. The Euler equation on $[0,T]\times\mathbb{T}^{2}$
in vorticity form reads 

\begin{equation}\tag{E}
\partial_{t}\omega+\mathrm{div}(\omega u)=0,\ \omega=\Delta\psi,\ u=(\nabla\psi)^{\bot}.\label{Euler Equation Intro}
\end{equation}
Here $u:[0,T]\times\mathbb{T}^{2}\rightarrow\mathbb{R}^{2}$ is a
vector field referred to as the \textit{velocity field} , $\omega$
is a scalar field on $[0,T]\times\mathbb{T}^{2}$ referred to as the
\textit{vorticity}, and $v^{\bot}$ designates the rotation by $\frac{\pi}{2}$
of a vector $v$, i.e. $v^{\bot}\coloneqq(-v_{2},v_{1})$. 
In the quasineutral scaling, the Vlasov--Poisson system, reads as follows:

\begin{equation}\tag{VP}
\left\{ \begin{array}{lc}
\partial_{t}f^{\varepsilon}+\xi\cdot\nabla_{x}f^{\varepsilon}-\nabla\Phi^{\varepsilon}\cdot\nabla_{\xi}f^{\varepsilon}=0,\vspace{0.25 cm}\\ 
\rho^{\varepsilon}=1-\varepsilon\Delta\Phi^{\varepsilon},\ \rho^{\varepsilon}(t,x)=\int_{\mathbb{R}^{2}}f(t,x,\xi)d\xi.
\end{array}\right.\label{eq:Vlasov Equation}
\end{equation}
Here, the unknown is a time dependent probability density function
$f^{\epsilon}(t,x,\xi)$ on $\mathbb{\mathbb{T}}^{2}\times\mathbb{R}^{2}$.
The variables $(x,\xi)\in\mathbb{\mathbb{T}}^{2}\times\mathbb{R}^{2}$
are referred to as the \textit{space variable} and \textit{velocity
variable} respectively. From a physical point of view, the parameter $\varepsilon$ represents the Debye length, and the quasineutral limits consists in the rigorous study of the above system when $\varepsilon$ tends to zero.

Typically, in the quasineutral limit one
seeks to prove uniform in time weak convergence of the density $\rho^{\varepsilon}(t,x)$
to $1$ and of the current $J^{\varepsilon}(t,x)\coloneqq \int_{\mathbb{R}^{2}}\xi f^{\varepsilon}(t,x,\xi)d\xi$ to the velocity field $u(t,x)$, in the limit as $\varepsilon$ goes to zero. Supplemented with monokinetic initial data, this type of limit offers a derivation of an equation of perfect fluids from a kinetic equation. The mathematical study of the quasineutral limit was initiated in the works of Brenier and Grenier \cite{Brenier Grenier}, \cite{Grenier}, which used an approach based on defect measures. Then in \cite{Grenier2} Grenier established the quasineutral limit of Vlasov--Poisson in the analytic regime. The quasineutral limit was also studied for magnetized plasmas in Golse and Saint-Raymond \cite{Golse Saint Raymond} by employing compactness arguments and in Puel-Saint-Raymond \cite{Puel-S.Raymond}. Brenier \cite{Brenier} used a modulated energy approach to
prove convergence of the density and current associated to the system
\eqref{eq:Vlasov Equation} to $1$ and the velocity field of Euler's equation respectively. 
The notion of solution considered by Brenier is that
of dissipative solutions which was introduced by Lions in \cite{Lions}. 
This result was then extended by Masmoudi in \cite{Masmoudi} to more general initial data. 
More recent works include  \cite{Han-Kwan Iacobelli} which studies a combined mean field quasineutral/gyrokinetic regime and \cite{Ben Porat,Rosenzweig Quantum} which are the semi-classical and quantum counterpart of the latter work. The combined quasineutral/gyrokinetic semi-classical limits have been also the subject of study of the works \cite{Puel}, \cite{Puel2}. We mention that the quasineutral limit can be studied for the Vlasov-Poisson system for ions, which mathematically means that the linear Poisson equation coupled with the Vlasov equation is replaced by a nonlinear version thereof - see  \cite{Han-Kwan},\cite{Griffin-Pickering Iacobelli 2023},\cite{GI20} for more details about this theme. Finally, we refer to \cite{GI22} for an inclusive overview of recent developments in the study of the quasineutral limit.


\subsection{Main contribution}
The focus in this work
is how the amount of regularity that is imposed for the solution
of the limit equation \eqref{Euler Equation Intro} affects the argument leading to the quasineutral limit. More specifically,
in view of the work of Yudovich \cite{Yudovich} about the existence and
uniqueness of weak solutions for the Euler equation \eqref{Euler Equation Intro},
it is natural to raise the question of the derivation of this class of solutions as a quasineutral limit. All the works mentioned so far
did not cover solutions of Yudovich type. To properly understand the
difficulty that emerges when applying the modulated energy
approach introduced by Brenier in \cite{Brenier}, let us briefly recall the strategy of the proof. The argument in \cite{Brenier} is based on a Gr\"onwall
estimate for the following time dependent quantity, known as the \textit{modulated
energy}:

\[
E_{\varepsilon}(t)\coloneqq\frac{1}{2}\int\left|\xi-u(t,x)\right|^{2}f^{\varepsilon}(t,x,\xi)dxd\xi+\frac{\varepsilon}{2}\int\left|\nabla\Phi^{\varepsilon}\right|^{2}(t,x)dx.
\]
After some classical manipulations involving the conservation of energy which can be found in the Appendix of \cite{Brenier}, one obtains the following
expression for the time derivative of $E_{\varepsilon}(t)$: 
\begin{align*}
\dot{E_{\varepsilon}}(t)=-&\int_{\mathbb{T}^{2}}\int_{\mathbb{R}^{2}}d(u)(t,x):(\xi-u(t,x))\otimes (\xi-u(t,x)) f^{\varepsilon}(t,x,\xi)dxd\xi  \\&+\varepsilon\int_{\mathbb{T}^{2}}d(u)(t,x):\nabla\Phi^{\varepsilon} \otimes \nabla\Phi^{\varepsilon} dx \\&+\int_{\mathbb{T}^{2}}A(u)(t,x)\cdot u(t,x)\left(\rho^{\varepsilon}(t,x)-1\right)dx\coloneqq\stackrel[k=1]{3}{\sum}I_{k}.  
\end{align*}
Here, $d(u)$ stands for the symmetric part of the Jacobian matrix
of $u$, i.e. $(d(u))_{ij}\coloneqq\partial_{i}u_{j}+\partial_{j}u_{i}$
and $A(u)\coloneqq\partial_{t}u+u\cdot\nabla u$. Let us examine the
integral $I_{1}$ for instance. When $\nabla u\in L^{\infty}$, it
is noticeable that $I_{1}$ is controlled by the kinetic part of the
energy $E_{\varepsilon}^{1}(t)$. However, when working with Yudovich
solutions, the boundedness of $\nabla u$ is no longer at our disposal.

The aim of this work is to 
show how to treat the more singular integrals that appear in the expression for $\dot{E_{\varepsilon}}(t)$.
To this aim we will introduce a more subtle argument, based on the combination of Fefferman's characterization of
the dual of the Hardy space $\mathscr{H}^{1}$ as the space $\mathrm{BMO}$, with some new estimates on the growth with respect to $\varepsilon$ of the characteristics in Vlasov-type system introduced in \cite{Griffin-Pickering Iacobelli 2023}.
 In particular, a large portion of our proof will be devoted to carefully study how the Hardy norm
of terms of the form 
\begin{equation}
\int_{\mathbb{R}^{2}}(\xi-u(t,x))\otimes (\xi-u(t,x))f^{\varepsilon}(t,x,\xi)d\xi\label{eq:typical integral}
\end{equation}
 grow with respect to $\varepsilon$. In view of the inequality $\left|\int fg\right|\leq\left\Vert f\right\Vert _{\mathrm{BMO}}\left\Vert g\right\Vert _{\mathscr{H}^{1}}$
(or slight variants thereof) this will ultimately allow us to conclude and prove the following:

\begin{thm}[Main result] \label{main thm}
Let the following assumptions on the initial data hold\\ 

$\mathbf{H1}$. $0\leq f_{0}^{\varepsilon}$$\in L^{\infty}\cap L^{1}(\mathbb{T}^{2}\times\mathbb{R}^{2})$
is a probability density.

$\mathbf{H2}.$ $\left\Vert (1+\left|\xi\right|^{k_{0}})f_{0}^{\varepsilon}\right\Vert _{L^{\infty}\cap L^{1}}\leq\overline{M}_{\varepsilon}$
where $k_{0}>4$ and $\overline{M}_{\varepsilon}=O(\varepsilon^{-\alpha})$ for some $\alpha>0$.

$\mathbf{H3}.$ There is some $\beta>0$ such that $E_{\varepsilon}(0)=O({\varepsilon}^{\beta})$.

$\mathbf{H4}.$ $\omega_{0}\in L^{\infty}(\mathbb{T}^{2})$ and $u_{0}$ is the velocity field associated with $\omega_{0}$, i.e. $u_{0}=-\nabla ^{\bot}V\star\omega_{0}$ 
where $V$ is the Coulomb potential on the torus.\\
Let $f^\varepsilon$ be the unique solution to \eqref{eq:Vlasov Equation} with initial data $f_{0}^{\varepsilon}$ provided by \Cref{thm:(Existence-for-Vlasov--Poisson)}, and let $\omega$ be the Yudovich solution to \eqref{Euler Equation Intro} with initial data $\omega_{0}$ provided by \Cref{Yudovich theorem }. Then, there is some
$T_{\ast}=T_{\ast}\left(\left\Vert \omega_{0}\right\Vert _{L^{\infty}(\mathbb{T}^{2})},\alpha, \beta\right)>0$
such that for all $t\in[0,T_{\ast}]$ it holds that 
\[\rho^{\varepsilon}(t,\cdot)\underset{\varepsilon\rightarrow0}{\rightharpoonup}1\]
and 
\[J^{\varepsilon}(t,\cdot)\underset{\varepsilon\rightarrow0}{\rightharpoonup}u(t,\cdot)\] 
weakly in the sense of measures. 
\end{thm}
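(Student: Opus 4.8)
The plan is to run a Grönwall argument on the modulated energy $E_\varepsilon(t)$, following the scheme of Brenier but replacing the $L^\infty$-bound on $\nabla u$ — which is unavailable for Yudovich solutions — by an $\mathscr H^1$–$\mathrm{BMO}$ duality estimate. Recall that for a Yudovich solution with $\omega_0\in L^\infty$, the velocity gradient $\nabla u$ lies in $\mathrm{BMO}$ uniformly in time (indeed $\|\nabla u(t)\|_{\mathrm{BMO}}\lesssim\|\omega(t)\|_{L^\infty}=\|\omega_0\|_{L^\infty}$ by transport), and similarly $d(u)$ and $A(u)$ (the latter equal to $-\nabla p$) are controlled in $\mathrm{BMO}$, with a possible mild (at most exponential) growth in $t$ coming from the log-Lipschitz flow; this is where $T_\ast$ and its dependence on $\|\omega_0\|_{L^\infty}$ enter. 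So the three terms $I_1,I_2,I_3$ in the formula for $\dot E_\varepsilon(t)$ must each be bounded by (a $\mathrm{BMO}$ norm of the relevant matrix field) times (an $\mathscr H^1$ norm of the accompanying kinetic or field quantity), and the latter must in turn be shown to be controlled by $E_\varepsilon(t)$ up to errors that vanish as $\varepsilon\to0$.

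First I would set up the functional framework: fix $t\in[0,T_\ast]$ and write $I_1=-\int_{\mathbb T^2}d(u):G^\varepsilon\,dx$ with $G^\varepsilon(t,x):=\int_{\mathbb R^2}(\xi-u)\otimes(\xi-u)f^\varepsilon\,d\xi$ the matrix field from \eqref{eq:typical integral}. The key lemma — and the technical heart of the paper — is a bound of the form $\|G^\varepsilon(t,\cdot)\|_{\mathscr H^1(\mathbb T^2)}\le C\big(E_\varepsilon^1(t)+r(\varepsilon)\big)$, where $E_\varepsilon^1$ is the kinetic part of the modulated energy and $r(\varepsilon)\to0$. Since $G^\varepsilon$ is pointwise a sum of rank-one nonnegative matrices, $\mathrm{tr}\,G^\varepsilon=2E_\varepsilon^1$-density $\ge0$, so $G^\varepsilon\in L^1$ with the right size; upgrading $L^1$ to $\mathscr H^1$ requires controlling the cancellation/oscillation, which is precisely where the characteristics estimates of \cite{Griffin-Pickering Iacobelli 2023} come in — one uses the moment bounds $\mathbf{H2}$ and the growth-in-$\varepsilon$ control of the Vlasov flow to localize the velocity support and verify the Hardy-space atomic (or maximal-function, or div–curl) criterion, paying powers of $\overline M_\varepsilon=O(\varepsilon^{-\alpha})$ against the smallness $E_\varepsilon(0)=O(\varepsilon^\beta)$ propagated by Grönwall. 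The same machinery handles $I_2$: the factor $\varepsilon\,\nabla\Phi^\varepsilon\otimes\nabla\Phi^\varepsilon$ is already of size $\varepsilon\cdot(\text{field energy})$, so after pairing with $d(u)\in\mathrm{BMO}$ one absorbs it into $\varepsilon$ times the field part $E_\varepsilon^2$ of the energy. For $I_3=\int A(u)\cdot u\,(\rho^\varepsilon-1)\,dx$, one writes $\rho^\varepsilon-1=-\varepsilon\Delta\Phi^\varepsilon$ and integrates by parts, or uses that $\rho^\varepsilon-1$ is controlled in a negative Sobolev / Hardy-type norm by $\sqrt{\varepsilon E_\varepsilon^2}$, so this term is also lower order.

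Collecting these, I would obtain a differential inequality $\dot E_\varepsilon(t)\le C(\|\omega_0\|_{L^\infty},t)\,E_\varepsilon(t)+C\,r(\varepsilon)$ valid on $[0,T_\ast]$, where the coefficient $C(\|\omega_0\|_{L^\infty},t)$ stays finite up to the Yudovich-type time $T_\ast$ and $r(\varepsilon)=\overline M_\varepsilon^{\,a}\,E_\varepsilon(t)^{b}+\varepsilon^c$ for suitable exponents; choosing $T_\ast$ (depending on $\|\omega_0\|_{L^\infty},\alpha,\beta$) small enough that the Grönwall factor does not overwhelm the initial smallness $\varepsilon^\beta$, Grönwall gives $E_\varepsilon(t)\to0$ uniformly on $[0,T_\ast]$. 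Finally I would convert the vanishing of the modulated energy into the stated weak convergences: $\varepsilon\int|\nabla\Phi^\varepsilon|^2\to0$ gives $\rho^\varepsilon-1=-\varepsilon\Delta\Phi^\varepsilon\rightharpoonup0$ in the sense of measures, and writing $J^\varepsilon-u\rho^\varepsilon=\int(\xi-u)f^\varepsilon\,d\xi$ and estimating it by Cauchy–Schwarz against $\sqrt{E_\varepsilon^1}$ (using the total mass $1$) together with $\rho^\varepsilon\rightharpoonup1$ yields $J^\varepsilon\rightharpoonup u$. The main obstacle is unquestionably the Hardy-norm estimate on $G^\varepsilon$: proving that the quadratic-in-$\xi$ velocity average genuinely lies in $\mathscr H^1$ (not merely $L^1$) with a quantitative, $\varepsilon$-tracked bound, and balancing the resulting powers of $\overline M_\varepsilon$ against $E_\varepsilon(0)$, is where all the new ideas — the characteristics growth bounds and the Fefferman duality — have to be combined carefully.
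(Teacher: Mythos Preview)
Your overall strategy --- modulated energy, $\mathscr H^1$--$\mathrm{BMO}$ duality in place of the unavailable $L^\infty$ bound on $\nabla u$, characteristics estimates to track $\varepsilon$-growth --- matches the paper. But the mechanism you propose for the Hardy estimate, and your explanation of why $T_\ast$ is small, are both off in ways that matter.

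For $I_1$ (and likewise $I_2$), the tensor $G^\varepsilon$ has nonnegative trace, so there is no cancellation to exploit: atomic or div--curl criteria are not the route. What the paper actually does is bound $\|\mathcal M G^\varepsilon\|_{L^1}$ via a Wiener-type $L\log L$ inequality, namely $\int\mathcal M g\lesssim\eta+\int|g|\log_+|g|+\log(1/\eta)\,\|g\|_1$ for free $\eta>0$, and then controls the $L\log L$ term by $\log(\|G^\varepsilon\|_\infty)\cdot\|G^\varepsilon\|_1$. The characteristics estimates enter precisely to give the pointwise bound $\|G^\varepsilon(t,\cdot)\|_\infty\lesssim\|m_2^\varepsilon(t)\|_\infty+\|\rho^\varepsilon(t)\|_\infty\lesssim\Gamma_\varepsilon^2(t)$ with $\Gamma_\varepsilon$ polynomial in $\varepsilon^{-1}$. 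Your handling of $I_2$ (``absorb into $\varepsilon E_\varepsilon^2$'') is too quick for the same reason: $d(u)\notin L^\infty$, so the identical $L\log L$ argument is required there, with the $L^\infty$ input $\|\nabla\Phi^\varepsilon\|_\infty^2\lesssim\Gamma_\varepsilon^2$. Consequently the Gr\"onwall coefficient is not a finite $C(\|\omega_0\|_\infty,t)$ as you write, but $\log\Gamma_\varepsilon+\log(1/\eta)\sim\log(1/\varepsilon)$; after Gr\"onwall this yields a factor $\varepsilon^{-CT}$ which must be beaten by $E_\varepsilon(0)+\eta+\varepsilon\sim\varepsilon^{\min(\beta,\delta,1)}$, forcing $T_\ast$ small. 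The $\mathrm{BMO}$ norm of $\nabla u$ does \emph{not} grow in time (it is bounded by $\|\omega(t)\|_\infty=\|\omega_0\|_\infty$), so the log-Lipschitz flow is not the source of the time restriction --- the $\varepsilon$-dependent Gr\"onwall coefficient is.
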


Let us observe that it is not difficult to construct initial data $f_{0}^{\varepsilon}$ which verify the assumptions $\mathbf{H1}$-$\mathbf{H3}$ - we shall elaborate on this in \Cref{Construction-of-initial data }. The motivation
for this work grew out of a question raised by Rosenzweig in \cite{Rosenzweig low reg},
which asks for a combined quasineutral mean field limit for Yudovich solutions, in the
spirit of \cite{Serfaty}. Such question already needs to be clarified for the quasineutral limit. Indeed, the  dissipative
solutions considered by Brenier \cite{Brenier} are not known to coincide with Yudovich solutions. In fact, the question of uniqueness for dissipative solutions with low regularity vorticity  is subtle, and is still not fully understood,  
see 4.4 in \cite{Lions} for the precise regularity assumptions needed in order to guarantee uniqueness of dissipative solutions. Finally, we stress that the main result (\Cref{main thm}) holds for a short time interval. In order to be able to extend this result for arbitrary times we expect that we will need to have at least a logarithmic (in $\varepsilon$) bound on $\left\Vert \rho^{\varepsilon}(t,\cdot)\right\Vert _{\infty}$ - which is open.

\textbf{Outline of the paper:} In \Cref{sec:Preliminaries} we
review some preliminaries, and in particular we include reminders
about the general theory for Yudovich solutions to \eqref{Euler Equation Intro} as well as general harmonic analysis estimates. In
\Cref{Sec: Harmonic analysis}, we continue with presenting
slight refinements of classical estimates for the maximal function and BMO norms.
Section \ref{sec:The-gyrokinetic-limit} reflects the main novelty
of the present work. In the first part of this section we follow closely
the approach taken in \cite{Griffin-Pickering Iacobelli 2023} in order to establish quantitative
estimates on $\left\Vert \rho^{\varepsilon}(t,\cdot)\right\Vert _{\infty}$ in the case of Vlasov-Poisson.
In the second part, we apply these estimates in order to quantify
the Hardy norm of integrals of the form \eqref{eq:typical integral}
by means of the modulated energy. These estimates eventually lead
to the proof of our  main result, which shows that, for short times, the modulated
energy approach can be adapted for Yudovich solutions, where a priori the gradient of the velocity is just BMO.
In \Cref{Construction-of-initial data } we explicitly construct well prepared initial data, i.e. initial data which simultaneously
realizes the asymptotic vanishing of the modulated energy as well
as growth conditions (with respect to $\varepsilon$) on the initial
$L^{\infty}$ norm of the density. Finally, in the Appendix we quickly recall the well-posedness theory for \eqref{eq:Vlasov Equation} on the two-dimensional torus.
\section{\label{sec:Preliminaries}Preliminaries }

The aim of this section is to recall basic properties of Yudovich solutions, following mainly \cite{Bahouri et al} and \cite{Majda Bertozzi}. 

\subsection{The Coulomb potential on $\mathbb{T}^{2}$}

Unlike in the planar settings, when working on the torus, the Coulomb
potential does not admit a simple closed formula and therefore we
start by briefly recalling how to properly define the Coulomb potential
on the torus. We fix some general notations. Let $\mathbb{T}^{2}\coloneqq\left[-\frac{1}{2},\frac{1}{2}\right]^{2},$
and set 

\[
\mathcal{\mathscr{P}}(\mathbb{T}^{2})\coloneqq\left\{ f:\mathbb{R}^{2}\rightarrow\mathbb{R}\left|\forall\alpha\in\mathbb{Z}^{2},\forall x\in\mathbb{\mathbb{R}}^{2}:f(x+\alpha)=f(x)\right.\right\}, 
\]

\[
\mathcal{\mathscr{P}}(\mathbb{T}^{2}\times\mathbb{R}^{2})\coloneqq\left\{ f:\mathbb{R}^{2}\times\mathbb{R}^{2}\rightarrow\mathbb{R}\left|\forall\alpha\in\mathbb{Z}^{2},\forall x,\xi\in\mathbb{\mathbb{R}}^{2}:f(x+\alpha,\xi)=f(x,\xi)\right.\right\} .
\]
For each non-negative integer $n\in\mathbb{N}_{0}$ define 

\[
C^{n,\alpha}(\mathbb{T}^{2})\coloneqq C^{n,\alpha}(\mathbb{R}^{2})\cap\mathcal{\mathscr{P}}(\mathbb{T}^{2}),\ C^{n,\alpha}(\mathbb{T}^{2}\times\mathbb{R}^{2})\coloneqq C^{n,\alpha}(\mathbb{R}^{2}\times\mathbb{R}^{2})\cap\mathcal{\mathscr{P}}(\mathbb{T}^{2}\times\mathbb{R}^{2}).
\]

\begin{lem}
\label{green function on torus -1} There exists a unique function
$V\in C^{\infty}(\mathbb{R}^{2}\setminus\mathbb{Z}^{2})\cap\mathcal{\mathscr{P}}(\mathbb{T}^{2})$
with the following properties. 

(a) For any $\varrho\in\mathcal{\mathscr{P}}(\mathbb{T}^{2})\cap L^{\infty}(\mathbb{T}^{2})$ with
$\int_{\mathbb{T}^{2}}\varrho(x)dx=0$ the function defined by 

\[
\Phi(x)\coloneqq\int_{\mathbb{T}^{2}}V(x-y)\varrho(y)dy
\]

is $C^{1}(\mathbb{T}^{2})$ and is the unique solution of 
\[
-\Delta\Phi=\varrho,\ \int_{\mathbb{T}^{2}}\Phi(x)dx=0.
\]
(b) There is a function $V_{0}\in C^{\infty}((\mathbb{R}^{2}\setminus\mathbb{Z}^{2})\cup\left\{ \mathbf{0}\right\} )$
such that for all $x\in\mathbb{R}^{2}\setminus\mathbb{Z}^{2}$ it
holds that 
\[
V(x)=V_{0}(x)+V_{1}(x),
\]
where $V_{1}(x)\coloneqq-\frac{1}{2\pi}\log(\left|x\right|)$. The
function $V$ is henceforth referred to as the Coulomb potential. 
\end{lem}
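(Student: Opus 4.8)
The plan is to construct $V$ explicitly via its Fourier series and then extract properties (a) and (b) from standard elliptic regularity. The only reasonable candidate is dictated by the equation one wants $V$ to solve, namely $-\Delta V=\delta_{\mathbf 0}-1$ on $\mathbb{T}^{2}$ (the constant $1$ being forced by the requirement that the right-hand side have zero average): since $\widehat{\delta_{\mathbf 0}}(k)=1$ and $\widehat 1(k)=\delta_{k,0}$, comparing Fourier coefficients gives
\[
V(x):=\sum_{k\in\mathbb{Z}^{2}\setminus\{0\}}\frac{1}{4\pi^{2}\abs{k}^{2}}\,e^{2\pi i k\cdot x}.
\]
Because $\sum_{k\neq 0}\abs{k}^{2s-4}<\infty$ for every $s<1$, this series converges in $H^{s}(\mathbb{T}^{2})$, in particular in $L^{2}(\mathbb{T}^{2})$; its coefficients are real and satisfy $\widehat V(-k)=\widehat V(k)$, so $V$ is a real periodic function, $V\in\mathscr{P}(\mathbb{T}^{2})$, and $-\Delta V=\delta_{\mathbf 0}-1$ holds in $\mathcal{D}'(\mathbb{T}^{2})$. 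On $\mathbb{R}^{2}\setminus\mathbb{Z}^{2}$ the right-hand side is the smooth function $-1$, so hypoellipticity of the Laplacian (Weyl's lemma) promotes $V$ to $C^{\infty}(\mathbb{R}^{2}\setminus\mathbb{Z}^{2})$.

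For (a): given $\varrho\in\mathscr{P}(\mathbb{T}^{2})\cap L^{\infty}$ with $\int_{\mathbb{T}^{2}}\varrho=0$, the convolution $\Phi=V\star\varrho$ is well defined, since the Fourier coefficients $\widehat V(k)\widehat\varrho(k)$ are absolutely summable by Cauchy--Schwarz ($V,\varrho\in L^{2}$); in Fourier variables one reads off $-\Delta\Phi=\varrho-\int_{\mathbb{T}^{2}}\varrho=\varrho$ and $\int_{\mathbb{T}^{2}}\Phi=0$. As $\varrho\in L^{p}(\mathbb{T}^{2})$ for every $p<\infty$, the elliptic $L^{p}$ estimate on the torus gives $\Phi\in W^{2,p}(\mathbb{T}^{2})$, hence $\Phi\in C^{1,\alpha}(\mathbb{T}^{2})\subset C^{1}(\mathbb{T}^{2})$ by Sobolev embedding with $p$ large. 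Uniqueness of such $\Phi$ is the usual energy argument: the difference of two mean-zero solutions is harmonic on $\mathbb{T}^{2}$, hence zero, since its Dirichlet integral vanishes.

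For (b): recall $-\Delta V_{1}=\delta_{\mathbf 0}$ in $\mathcal{D}'(\mathbb{R}^{2})$ for $V_{1}=-\frac{1}{2\pi}\log\abs{x}$. Hence on any ball centered at $\mathbf 0$ disjoint from $\mathbb{Z}^{2}\setminus\{\mathbf 0\}$ one has $-\Delta(V-V_{1})=(\delta_{\mathbf 0}-1)-\delta_{\mathbf 0}=-1$, a smooth right-hand side, so $V-V_{1}$ coincides near the origin with a smooth function, again by Weyl's lemma; away from $\mathbb{Z}^{2}$ both $V$ and $V_{1}$ are already smooth. Setting $V_{0}:=V-V_{1}$ then yields $V_{0}\in C^{\infty}\big((\mathbb{R}^{2}\setminus\mathbb{Z}^{2})\cup\{\mathbf 0\}\big)$ and $V=V_{0}+V_{1}$ on $\mathbb{R}^{2}\setminus\mathbb{Z}^{2}$.

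Finally, for uniqueness of $V$: if $\tilde V$ also satisfies (a) and (b) with the same $V_{1}$, then $\tilde V-V=\tilde V_{0}-V_{0}$ extends smoothly across $\mathbf 0$ and, being periodic, belongs to $C^{\infty}(\mathbb{T}^{2})$; property (a) then forces $(\tilde V-V)\star\varrho=0$ for every mean-zero $\varrho\in L^{\infty}$ (the difference of two solutions of the same well-posed Poisson problem), and testing against $\varrho=e^{2\pi i k\cdot x}$ with $k\neq 0$ shows that all nonzero Fourier coefficients of $\tilde V-V$ vanish, so $\tilde V-V$ is constant; imposing the normalization $\int_{\mathbb{T}^{2}}V=0$, which is built into the Fourier construction, makes that constant vanish. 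I expect the only point requiring genuine care is the bookkeeping in (b): justifying the distributional identities $-\Delta V=\delta_{\mathbf 0}-1$ and $-\Delta(V-V_{1})=-1$ near the origin and invoking interior elliptic regularity on the torus rather than on $\mathbb{R}^{2}$; everything else is routine.
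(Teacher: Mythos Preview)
The paper does not prove this lemma: it is stated in Section~2.1 as a preliminary recall of the Coulomb potential on $\mathbb{T}^{2}$, with no argument supplied. Your proof is therefore not competing with anything in the text; it simply fills in a result the authors take for granted.

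Your argument is correct and follows the standard route: define $V$ by the Fourier series $\sum_{k\neq 0}(4\pi^{2}|k|^{2})^{-1}e^{2\pi i k\cdot x}$, read off $-\Delta V=\delta_{\mathbf 0}-1$ in $\mathcal{D}'(\mathbb{T}^{2})$, use hypoellipticity of $\Delta$ for smoothness off the lattice, and subtract the free-space fundamental solution $V_{1}$ near the origin to isolate the smooth remainder $V_{0}$. The treatment of~(a) via Calder\'on--Zygmund $L^{p}$ estimates and Sobolev embedding is clean, and the uniqueness argument for $\Phi$ is the expected one.

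One small remark on the uniqueness of $V$ itself: as you observed, properties (a) and (b) alone only pin $V$ down up to an additive constant, since $\int_{\mathbb{T}^{2}}\varrho=0$ makes $(V+c)\star\varrho=V\star\varrho$. Your fix---imposing $\int_{\mathbb{T}^{2}}V=0$, which the Fourier construction delivers automatically---is the natural normalization, though the lemma as stated in the paper leaves this implicit. This is a quirk of the statement rather than of your proof.
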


\subsection{The Yudovich Theorem }

By a weak solution to \eqref{Euler Equation Intro} we mean
the following. 
\begin{defn}
\label{Definition of weak solution } We say that $\omega:[0,T]\times\mathbb{T}^{2}\rightarrow\mathbb{R}$
is a \textit{weak solution to \eqref{Euler Equation Intro}} if $\omega\in L^{\infty}([0,T];L^{1}(\mathbb{T}^{2})\cap L^{p}(\mathbb{T}^{2}))$
for some $p>2$ and for all $\varphi\in W^{1,\infty}([0,T];C_{0}^{1}(\mathbb{R}^{2}))$
and all $t\in[0,T]$ it holds that 
\begin{align*}
\int_{\mathbb{T}^{2}}\omega(t,x)\varphi(t,x)dx=\int_{\mathbb{T}^{2}}\omega_{0}(x)\varphi(0,x)dx+\int_{0}^{t}\int_{\mathbb{T}^{2}}\omega(s,x)\partial_{s}\varphi(s,x)dxds\\ +\int_{0}^{t}\int_{\mathbb{T}^{2}}\nabla\varphi(s,x)\cdot\nabla^{\bot}V\star\omega(s,x)\omega(s,x)dxds.  
\end{align*}
\end{defn}
In \cite{Yudovich}, Yudovich obtained existence
and uniqueness of such a weak solution to \eqref{Euler Equation Intro} under the assumption that the initial vorticity is bounded. The periodic version of Yudovich's theorem stated here is taken from \cite{Crippa  Stefani}.  
\begin{thm}
\label{Yudovich theorem } Let $\omega_{0}\in L^{\infty}(\mathbb{T}^{2})$.
Then there exists a unique solution $\omega\in L^{\infty}([0,\infty); L^{\infty}(\mathbb{T}^{2})),$
in the sense of \Cref{Definition of weak solution }. Moreover
the $L^{p}$ norms of this solution are conserved, i.e. for all $t\in[0,\infty)$ it holds that  

\[
\left\Vert \omega(t,\cdot)\right\Vert _{p}=\left\Vert \omega_0(\cdot)\right\Vert _{p}, \ 1\leq p\leq \infty.
\]
\end{thm}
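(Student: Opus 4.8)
This is the classical theorem of Yudovich, transplanted to the torus via the Coulomb potential $V$ of \Cref{green function on torus -1}; I would prove it by the standard Eulerian regularization/compactness scheme for existence, followed by a Lagrangian Osgood argument for uniqueness. Throughout, the Biot--Savart law on $\mathbb{T}^2$ reads $u = -\nabla^{\perp}V\star\omega$, and the splitting $V = V_0 + V_1$ from \Cref{green function on torus -1} shows that $\nabla^{2}V$ is a Calder\'on--Zygmund kernel plus a smooth bounded correction, whence the two quantitative facts used repeatedly: $\|\nabla u\|_{p}\lesssim p\,\|\omega\|_{p}$ for $2\le p<\infty$, and consequently the log-Lipschitz modulus $|u(x)-u(y)|\le C\|\omega\|_{\infty}\,|x-y|\,(1+\log_{-}|x-y|)$.

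\textbf{Existence.} First I would regularize the datum: mollifying $\omega_{0}$ on $\mathbb{T}^{2}$ gives $\omega_{0}^{n}\in C^{\infty}(\mathbb{T}^{2})$ with $\|\omega_{0}^{n}\|_{p}\le\|\omega_{0}\|_{p}$ for every $p\in[1,\infty]$ (Young's inequality, since the kernel is a probability density). By 2D global well-posedness for smooth data, \eqref{Euler Equation Intro} with datum $\omega_{0}^{n}$ has a smooth global solution $\omega^{n}$, transported by the divergence-free flow of $u^{n}=-\nabla^{\perp}V\star\omega^{n}$; hence $\|\omega^{n}(t)\|_{p}=\|\omega_{0}^{n}\|_{p}\le\|\omega_{0}\|_{p}$ for all $t$ and all $p$, and in particular $\|\omega^{n}\|_{L^{\infty}_{t,x}}\le\|\omega_{0}\|_{\infty}$. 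The bound on $\nabla^{2}V$ then gives $u^{n}$ bounded in $L^{\infty}_{t}W^{1,q}_{x}$ for every finite $q$, uniformly in $n$, while $\partial_{t}\omega^{n}=-\mathrm{div}(\omega^{n}u^{n})$ is bounded in $L^{\infty}_{t}W^{-1,q}_{x}$. An Aubin--Lions--Simon argument then extracts a subsequence with $\omega^{n}\rightharpoonup\omega$ weakly-$\ast$ in $L^{\infty}_{t}L^{p}_{x}$ and $u^{n}\to u=-\nabla^{\perp}V\star\omega$ strongly in $C_{t}L^{2}_{x}$ (Rellich in $x$, plus equicontinuity in $t$). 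Strong convergence of $u^{n}$ against weak-$\ast$ convergence of $\omega^{n}$ passes the quadratic term $\nabla\varphi\cdot u^{n}\,\omega^{n}$ to the limit, so $\omega$ is a weak solution in the sense of \Cref{Definition of weak solution }, and weak-$\ast$ lower semicontinuity yields $\|\omega(t)\|_{p}\le\|\omega_{0}\|_{p}$.

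\textbf{Uniqueness and conservation.} Given two bounded weak solutions $\omega_{1},\omega_{2}$ with the same datum, the log-Lipschitz bound (uniform in $t$ because $\|\omega_{i}(t)\|_{\infty}\le\|\omega_{0}\|_{\infty}$) makes each velocity field the generator of a unique measure-preserving flow $X_{i}$ by the Osgood uniqueness theorem, with $\omega_{i}(t)=\omega_{0}\circ X_{i}(t)^{-1}$. Setting $w=u_{1}-u_{2}$ and estimating $\tfrac{d}{dt}\|w(t)\|_{L^{2}}^{2}$ (equivalently $\tfrac{d}{dt}\|X_{1}(t)-X_{2}(t)\|_{L^{2}}^{2}$) using incompressibility and the log-Lipschitz modulus produces a differential inequality of the form $\tfrac{d}{dt}\|w(t)\|_{L^{2}}^{2}\le\Phi(\|w(t)\|_{L^{2}}^{2})$ with $\Phi(r)=Cr\log(e+r^{-1})$ satisfying $\int_{0^{+}}dr/\Phi(r)=\infty$; since $w(0)=0$, the Osgood lemma forces $w\equiv0$, hence $\omega_{1}=\omega_{2}$. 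Finally, knowing the solution is the unique, Lagrangian one, $\omega(t)=\omega_{0}\circ X(t)^{-1}$ with $X(t)$ measure-preserving (as $\mathrm{div}\,u=0$), so the lower-semicontinuity inequality upgrades to the stated equality $\|\omega(t)\|_{p}=\|\omega_{0}\|_{p}$ for all $p\in[1,\infty]$. The main obstacle is the uniqueness step: the velocity field is only log-Lipschitz, so Gr\"onwall must be replaced by an Osgood-type argument, and one must track the linear-in-$p$ growth of the Calder\'on--Zygmund constants on $\mathbb{T}^{2}$ to obtain the log-Lipschitz modulus in the first place.
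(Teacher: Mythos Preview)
The paper does not actually supply a proof of this theorem: it is stated as a classical result and attributed to Yudovich, with the periodic version taken from \cite{Crippa  Stefani} (and the surrounding discussion pointing to \cite{Bahouri et al}, \cite{Majda Bertozzi}). There is therefore nothing in the paper to compare your argument against; your sketch is the standard regularization/compactness scheme for existence followed by an Osgood-type argument for uniqueness, which is precisely the route taken in those references.

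One small caveat on your uniqueness paragraph: you invoke the Lagrangian representation $\omega_{i}(t)=\omega_{0}\circ X_{i}(t)^{-1}$ for an \emph{arbitrary} weak solution in the sense of \Cref{Definition of weak solution }, and then run the Osgood estimate on either $\|u_{1}-u_{2}\|_{L^{2}}$ or $\|X_{1}-X_{2}\|_{L^{2}}$ as if these were interchangeable. Strictly speaking, knowing that every bounded weak solution is Lagrangian (i.e.\ renormalized and transported by the unique Osgood flow) is itself a nontrivial step; the cleaner classical route---and the one in the references the paper cites---is to work purely at the Eulerian level, estimating $\tfrac{d}{dt}\|u_{1}-u_{2}\|_{L^{2}}^{2}$ directly from the velocity formulation and exploiting the linear-in-$p$ growth of $\|\nabla u_{i}\|_{p}$ to produce the Osgood modulus. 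Once uniqueness is established, the solution coincides with the one constructed by the approximation, and the Lagrangian representation (hence the exact $L^{p}$ conservation) follows for that specific solution. Your outline contains all the right ingredients; just be careful about the logical order in which Lagrangianity is used versus proved.
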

\begin{rem}
It is essential to keep in mind that the boundedness of the vorticity $\omega$
by no means guarantees the boundedness of $\nabla u$, due to the failure
of the Calderon-Zygmund inequality at the end-points exponents $p=1,\infty$
(see \cite[footnote 3, page 294]{Bahouri et al} and reference therein for details).
If the vorticity enjoys some Besov regularity than it can be shown
that the associated velocity field is Lipschitz (Proposition 7.7 in
\cite{Bahouri et al}), but the point is that we avoid to impose any
assumption of this sort. 
\end{rem}
Before going further we recall the definition of the function space
$\mathrm{BMO}(\Omega)$.
\begin{defn}
Let $\Omega=\mathbb{T}^{d}$ or $\Omega=\mathbb{R}^{d}$ and let $f\in L_{\mathrm{loc}}^{1}(\Omega)$.
We shall say that $f$ is of \textit{bounded mean oscillation} on
$\Omega$ or in brief $\mathrm{BMO}(\Omega)$, if 
\begin{equation}
\left\Vert f\right\Vert _{\mathrm{BMO}(\Omega)}\coloneqq\underset{Q\subset\Omega}{\sup}\frac{1}{\left|Q\right|}\int_{Q}\left|f(x)-\frac{1}{\left|Q\right|}\int_{Q}f\right|dx<\infty.\label{eq:-2-1}
\end{equation}
The supremum in Equation (\ref{eq:-2-1}) is taken over all cubes
$Q\subset\Omega$ with edges parallel to the axis, and $\left\Vert f\right\Vert _{\mathrm{\mathrm{BMO(}\Omega})}$
is called the $\mathrm{BMO}$ norm. 
\end{defn}
The failure of the boundedness of the Calderon-Zygmund type operators
from $L^{\infty}$ to itself is bypassed by the fact that operators
of the latter type are bounded from $L^{\infty}$ to $\mathrm{BMO}$, precisely
put we have the following:
\begin{thm}
\textup{\label{Endpoint CZ Theorem }}
Let $V$ be as in \Cref{green function on torus -1}. Let $T\omega=T_{kj}\omega\coloneqq\nabla_{k}\nabla_{j}^{\bot}V\star\omega$.
Then, there is a constant $A>0$ such that 

\[
\left\Vert T\omega\right\Vert _{\mathrm{BMO}(\mathbb{T}^{2})}\leq A\left\Vert \omega\right\Vert _{L^{\infty}(\mathbb{T}^{2})}
\]
for all $\omega\in L^{\infty}(\mathbb{T}^{2})$. In particular, if
$\omega$ is as in \Cref{Yudovich theorem } and $u$ is the
associated velocity field then 
\[
\underset{t\in[0,T]}{\sup}\left\Vert \nabla u(t,\cdot)\right\Vert _{\mathrm{\mathrm{BMO}(\mathbb{T}^{2})}}\lesssim\left\Vert \omega(0,\cdot)\right\Vert _{L^{\infty}(\mathbb{T}^{2})}.
\]
\end{thm}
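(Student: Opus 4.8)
The plan is to establish the operator bound $\|T\omega\|_{\mathrm{BMO}(\mathbb{T}^{2})} \lesssim \|\omega\|_{L^{\infty}(\mathbb{T}^{2})}$ first, and then derive the uniform-in-time estimate for the velocity gradient as an immediate consequence of the $L^\infty$ conservation property of Yudovich solutions from Theorem~\ref{Yudovich theorem }. For the operator bound, I would use the decomposition $V = V_0 + V_1$ from Lemma~\ref{green function on torus -1}, where $V_1(x) = -\frac{1}{2\pi}\log|x|$ is the planar Newtonian potential and $V_0$ is smooth near the origin. The kernel of $T_{kj}$ splits accordingly: the contribution of $V_0$ is convolution against a smooth (hence bounded) kernel on the compact torus, which maps $L^\infty(\mathbb{T}^2)$ into $L^\infty(\mathbb{T}^2) \hookrightarrow \mathrm{BMO}(\mathbb{T}^2)$ trivially with the right constant. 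So the entire difficulty is concentrated in the singular part $\nabla_k \nabla_j^\perp V_1 \star \omega$, which is (up to a harmless modification reflecting the periodization) a classical Calderón--Zygmund singular integral operator of convolution type: its kernel $K_{kj}(x) = \nabla_k\nabla_j^\perp V_1(x)$ is homogeneous of degree $-2$, smooth away from the origin, and has vanishing mean on spheres (equivalently, the associated Fourier multiplier $\widehat{K_{kj}}(\zeta) = \zeta_k \zeta_j^\perp/|\zeta|^2$ is bounded and smooth off the origin).

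For this singular part I would invoke the standard $L^\infty \to \mathrm{BMO}$ boundedness of Calderón--Zygmund operators: given a cube $Q$ with center $x_Q$ and side $\ell$, split $\omega = \omega \mathbf{1}_{2Q} + \omega \mathbf{1}_{(2Q)^c}$; the near part is controlled by the $L^2 \to L^2$ bound (Plancherel, since the multiplier is bounded) via $\frac{1}{|Q|}\int_Q |T(\omega\mathbf{1}_{2Q})| \le |Q|^{-1/2}\|T(\omega\mathbf{1}_{2Q})\|_{L^2} \lesssim |Q|^{-1/2}\|\omega\mathbf{1}_{2Q}\|_{L^2} \lesssim \|\omega\|_{L^\infty}$, and the far part, after subtracting the constant $T(\omega\mathbf{1}_{(2Q)^c})(x_Q)$, is estimated using the Hörmander kernel-smoothness condition $|K(x-y) - K(x_Q - y)| \lesssim \ell |x_Q - y|^{-3}$ together with the bound $|\omega| \le \|\omega\|_{L^\infty}$ and the convergent integral $\int_{|x_Q - y| > \ell} \ell |x_Q - y|^{-3}\,dy \lesssim 1$. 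Averaging the oscillation of $T\omega$ against this particular constant over $Q$ and taking the supremum over all $Q$ yields the claimed $\mathrm{BMO}$ bound; on the torus one works with cubes of side at most $1$ and absorbs the periodization tail into the smooth part $V_0$, so the argument is genuinely local and the classical estimate applies verbatim.

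Finally, for the velocity statement: since $u(t,\cdot) = \nabla^\perp V \star \omega(t,\cdot) = -\nabla^\perp \psi(t,\cdot)$, we have $\nabla u = (\nabla_k \nabla_j^\perp V \star \omega)_{kj} = T\omega$ componentwise (up to signs and index permutations irrelevant to the norm), so $\|\nabla u(t,\cdot)\|_{\mathrm{BMO}(\mathbb{T}^2)} = \|T\omega(t,\cdot)\|_{\mathrm{BMO}(\mathbb{T}^2)} \le A\|\omega(t,\cdot)\|_{L^\infty(\mathbb{T}^2)}$; taking the supremum over $t \in [0,T]$ and using the conservation $\|\omega(t,\cdot)\|_{L^\infty} = \|\omega_0\|_{L^\infty}$ from Theorem~\ref{Yudovich theorem } gives the desired uniform bound. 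The main obstacle is purely the bookkeeping of the torus-versus-plane distinction — ensuring that the periodized kernel still satisfies the size and Hörmander conditions after extracting the smooth correction $V_0$ — but this is a standard reduction and does not affect the structure of the argument; everything else is the textbook $L^\infty \to \mathrm{BMO}$ Calderón--Zygmund theorem (see, e.g., the references to \cite{Bahouri et al}).
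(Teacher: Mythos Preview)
Your proposal is correct and follows essentially the same approach as the paper: decompose $V=V_{0}+V_{1}$, dispose of the $V_{0}$ contribution via $L^{\infty}\hookrightarrow\mathrm{BMO}$, and appeal to the classical $L^{\infty}\to\mathrm{BMO}$ Calder\'on--Zygmund theorem for the singular part, then combine with the $L^{\infty}$ conservation of Yudovich vorticity. The only place where the paper is more explicit than you is in the torus-to-plane reduction: rather than leaving this as ``bookkeeping,'' the paper extends $\omega$ by zero to $\varpi$ on $\mathbb{R}^{2}$, writes the torus convolution as $\int_{\mathbb{R}^{2}}\nabla_{k}\nabla_{j}^{\bot}V_{1}(x-y)\mathbf{1}_{\mathbb{T}^{2}}(x-y)\varpi(y)\,dy$, and then uses a smooth cutoff $\chi$ (supported in $B_{1/4}(0)$, equal to $1$ on $B_{1/8}(0)$) to peel off a genuinely planar singular integral to which the $\mathbb{R}^{2}$ theory (citing \cite{Stein}) applies directly, while the remaining $(1-\chi)\mathbf{1}_{\mathbb{T}^{2}}$ piece has a bounded kernel and is estimated in $L^{\infty}$.
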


\begin{proof}
Write $V=V_{0}+V_{1}$. Denote by $\varpi$ the  extension
of $\omega$ by $0$ to $\mathbb{R}^{2}$. Then 
\begin{equation}
\nabla_{k}\nabla_{j}^{\bot}V\star\omega(x)=\int_{\mathbb{R}^{2}}\nabla_{k}\nabla_{j}^{\bot}V_{1}(x-y)\mathbf{1}_{\mathbb{T}^{2}}(x-y)\varpi(y)dy+\int_{\mathbb{R}^{2}}\nabla_{k}\nabla_{j}^{\bot}V_{0}(x-y)\mathbf{1}_{\mathbb{T}^{2}}(x-y)\varpi(y)dy.\label{eq:-8}
\end{equation}
Let $\chi$ be a smooth radial compactly supported function such that
$\mathrm{supp}(\chi)\subset B_{\frac{1}{4}}(0)$ and $\chi\equiv1$
on $B_{\frac{1}{8}}(0)$. The first integral is 
\begin{align*}
\int_{\mathbb{R}^{2}}\nabla_{k}\nabla_{j}^{\bot}V_{1}(x-y)\mathbf{1}_{\mathbb{T}^{2}}(x-y)\varpi(y)dy=&\int_{\mathbb{R}^{2}}\nabla_{k}\nabla_{j}^{\bot}V_{1}(x-y)\chi(x-y)\mathbf{1}_{\mathbb{T}^{2}}(x-y)\varpi(y)dy\\
&+\int_{\mathbb{R}^{2}}\nabla_{k}\nabla_{j}^{\bot}V_{1}(x-y)(1-\chi(x-y))\mathbf{1}_{\mathbb{T}^{2}}(x-y)\varpi(y)dy\\
=&\int_{\mathbb{R}^{2}}\nabla_{k}\nabla_{j}^{\bot}V_{1}(x-y)\chi(x-y)\varpi(y)dy
\end{align*}
\begin{equation}
+\int_{\mathbb{R}^{2}}\nabla_{k}\nabla_{j}^{\bot}V_{1}(x-y)(1-\chi(x-y))\mathbf{1}_{\mathbb{T}^{2}}(x-y)\varpi(y)dy. \hspace{-4.0 cm} 
\label{eq:-7}
\end{equation}
Thanks to the usual Calderon-Zygmund theory on $\mathbb{R}^{2}$ (see e.g. \cite{Stein}) we
can estimate the $\mathrm{BMO}(\mathbb{T}^2)$ norm of first integral in (\ref{eq:-7}) as 
\begin{align*}
\left\Vert \int_{\mathbb{R}^{2}}\nabla_{k}\nabla_{j}^{\bot}V_{1}(\cdot-y)\chi(\cdot-y)\varpi(y)dy\right\Vert _{\mathrm{BMO}(\mathbb{T}^{2})}
&\leq\left\Vert \int_{\mathbb{R}^{2}}\nabla_{k}\nabla_{j}^{\bot}V_{1}(\cdot-y)\chi(\cdot-y)\varpi(y)dy\right\Vert _{\mathrm{BMO}(\mathbb{R}^{2})}\\&\leq C\left\Vert \varpi\right\Vert _{L^{\infty}(\mathbb{R}^{2})}=C\left\Vert \omega\right\Vert _{L^{\infty}(\mathbb{T}^{2})}.
\end{align*}
Observe the inequality $\left\Vert \cdot\right\Vert _{\mathrm{BMO}(\mathbb{T}^{2})}\leq2\left\Vert \cdot\right\Vert _{L^{\infty}(\mathbb{T}^{2})}$.
Therefore the first integral is 
\begin{align*}
 & \left\Vert  \int_{\mathbb{R}^{2}}\nabla_{k}\nabla_{j}^{\bot}V_{1}(\cdot-y)(1-\chi(\cdot-y))\mathbf{1}_{\mathbb{T}^{2}}(\cdot-y)\varpi(y)dy\right\Vert _{\mathrm{BMO}(\mathbb{T}^{2})}\\
\leq&2\left\Vert \int_{\mathbb{R}^{2}}\nabla_{k}\nabla_{j}^{\bot}V_{1}(\cdot-y)(1-\chi(\cdot-y))\mathbf{1}_{\mathbb{T}^{2}}(\cdot-y)\varpi(y)dy\right\Vert _{L^{\infty}(\mathbb{T}^{2})}\\
\leq&2\log(8)\left\Vert \varpi\right\Vert _{L^{\infty}(\mathbb{R}^{2})}=2\log(8)\left\Vert \omega\right\Vert _{L^{\infty}(\mathbb{T}^{2})}.
\end{align*}
As for the second integral in (\ref{eq:-8}), recall that by Lemma (\ref{green function on torus -1}) we have
$\nabla_{k}\nabla_{j}^{\bot}V_{0}\in L^{\infty}(\mathbb{T}^{2})$.
It follows that 
\begin{align*}
\left\Vert \int_{\mathbb{R}^{2}}\mathbf{1}_{\mathbb{T}^{2}}(\cdot-y)\nabla_{k}\nabla_{j}^{\bot}V_{0}(\cdot-y)\varpi(y)dy\right\Vert _{\mathrm{BMO}(\mathbb{T}^{2})}
&\leq2\left\Vert \int_{\mathbb{R}^{2}}\mathbf{1}_{\mathbb{T}^{2}}(\cdot-y)\nabla_{k}\nabla_{j}^{\bot}V_{0}(\cdot-y)\varpi(y)dy\right\Vert _{L^{\infty}(\mathbb{T}^{2})}\\&\leq2\left\Vert \nabla_{k}\nabla_{j}^{\bot}V_{0}\right\Vert _{L^{\infty}(\mathbb{T}^{2})}\left\Vert \omega\right\Vert _{L^{\infty}(\mathbb{T}^{2})}.
\end{align*}
\end{proof}

The following control on several time dependent quantities in terms
of the initial data are classical (and are also recalled in \cite{Rosenzweig low reg}
and references therein) 
\begin{lem}
\label{Estimate in terms of initial data } It holds that $\left\Vert u\right\Vert _{L^{\infty}([0,T];L^{\infty}(\mathbb{T}^{2}))}\lesssim\left\Vert \omega_{0}\right\Vert _{L^{\infty}(\mathbb{T}^{2})}$.
\end{lem}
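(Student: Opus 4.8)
The plan is to exploit the explicit structure of the Biot--Savart kernel on the torus together with the conservation of the $L^{p}$ norms of the Yudovich solution. Write $u(t,\cdot)=K\star\omega(t,\cdot)$ with $K\coloneqq-\nabla^{\bot}V$, where $V$ is the Coulomb potential provided by \Cref{green function on torus -1} (at $t=0$ this is exactly assumption $\mathbf{H4}$, and for $t>0$ it follows from $\omega=\Delta\psi$, $u=\nabla^{\bot}\psi$ and $\psi=-V\star\omega$; in any case the precise sign is immaterial for the estimate). Using the splitting $V=V_{0}+V_{1}$ from part (b) of that lemma, $K=-\nabla^{\bot}V_{0}-\nabla^{\bot}V_{1}$: here $\nabla^{\bot}V_{0}$ extends to a smooth, hence bounded, function on $\mathbb{T}^{2}$, while $\nabla^{\bot}V_{1}(x)=-\tfrac{1}{2\pi}\tfrac{x^{\bot}}{|x|^{2}}$ satisfies $|\nabla^{\bot}V_{1}(x)|\le\tfrac{1}{2\pi|x|}$. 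Since $|x|^{-1}$ is integrable near the origin in dimension two and $\mathbb{T}^{2}$ is bounded, $K\in L^{1}(\mathbb{T}^{2})$, with $\|K\|_{L^{1}(\mathbb{T}^{2})}$ depending only on $\|\nabla^{\bot}V_{0}\|_{L^{\infty}(\mathbb{T}^{2})}$ and a universal constant.

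Consequently, Young's convolution inequality on $\mathbb{T}^{2}$ gives, for every $t\in[0,T]$,
\[
\|u(t,\cdot)\|_{L^{\infty}(\mathbb{T}^{2})}\le\|K\|_{L^{1}(\mathbb{T}^{2})}\,\|\omega(t,\cdot)\|_{L^{\infty}(\mathbb{T}^{2})}=\|K\|_{L^{1}(\mathbb{T}^{2})}\,\|\omega_{0}\|_{L^{\infty}(\mathbb{T}^{2})},
\]
where the last equality is the conservation of the $L^{\infty}$ norm furnished by \Cref{Yudovich theorem }. Taking the supremum over $t\in[0,T]$ yields the claim. As a variant, in the whole-space setting, where $K\notin L^{1}$, one would instead split $\int K(x-y)\omega(t,y)\,dy$ over $\{|x-y|<R\}$ and $\{|x-y|\ge R\}$, bound the two pieces by $CR\|\omega(t,\cdot)\|_{\infty}$ and $CR^{-1}\|\omega(t,\cdot)\|_{1}$ respectively, and optimize in $R$ to obtain $\|u(t,\cdot)\|_{\infty}\lesssim\|\omega(t,\cdot)\|_{1}^{1/2}\|\omega(t,\cdot)\|_{\infty}^{1/2}$, then use $\|\omega(t,\cdot)\|_{1}\le\|\omega(t,\cdot)\|_{\infty}$ (as $|\mathbb{T}^{2}|=1$) together with conservation of norms; on $\mathbb{T}^{2}$ this refinement is not needed.

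There is essentially no obstacle in this argument: the only point needing (minimal) care is that the singular part of the periodic Coulomb kernel coincides with the planar logarithm — which is exactly \Cref{green function on torus -1}(b) — and that the bounded geometry of $\mathbb{T}^{2}$ removes any difficulty stemming from the slow decay of $|x|^{-1}$ at infinity.
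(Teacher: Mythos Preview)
Your proof is correct and follows essentially the same route as the paper: both use the splitting $V=V_{0}+V_{1}$ from \Cref{green function on torus -1}, the boundedness of $\nabla^{\bot}V_{0}$, the $|x|^{-1}$ bound on $\nabla^{\bot}V_{1}$, and the conservation of $L^{p}$ norms of $\omega$. The only cosmetic difference is that you package the singular part as ``$K\in L^{1}(\mathbb{T}^{2})$, apply Young,'' whereas the paper writes out a near/far split $\frac{\mathbf{1}_{|\cdot|\le 1}}{|\cdot|}+\frac{\mathbf{1}_{|\cdot|\ge 1}}{|\cdot|}$ and ends up with $\|\omega(t,\cdot)\|_{L^{\infty}\cap L^{1}}$; on the bounded domain $\mathbb{T}^{2}$ these are equivalent and your formulation is slightly cleaner.
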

\textit{Proof}. We have
\begin{align*}
\left\Vert \nabla^{\bot}V\star\omega(t,\cdot)\right\Vert _{L^{\infty}(\mathbb{T}^{2})}\leq\left\Vert \nabla^{\bot}V_{1}\star\omega(t,\cdot)\right\Vert _{L^{\infty}(\mathbb{T}^{2})}+\left\Vert \nabla^{\bot}V_{0}\star\omega(t,\cdot)\right\Vert _{L^{\infty}(\mathbb{T}^{2})}.    
\end{align*}
First 
\begin{align*}
\left\Vert \nabla^{\bot}V_{1}\star\omega(t,\cdot)\right\Vert _{L^{\infty}(\mathbb{T}^{2})}&\lesssim\left\Vert \frac{\boldsymbol{\mathbf{1}}_{\left|\cdot\right|\leq1}}{\left|\cdot\right|}\star\omega(t,\cdot)\right\Vert _{L^{\infty}(\mathbb{T}^{2})}+\left\Vert \frac{\boldsymbol{\mathbf{1}}_{\left|\cdot\right|\geq1}}{\left|\cdot\right|}\star\omega(t,\cdot)\right\Vert _{L^{\infty}(\mathbb{T}^{2})}\\&\lesssim\left\Vert \omega(t,\cdot)\right\Vert _{L^{\infty}\cap L^{1}(\mathbb{T}^{2})}=\left\Vert \omega_{0}\right\Vert _{L^{\infty}\cap L^{1}(\mathbb{T}^{2})}.    
\end{align*}
Second 
\[
\left\Vert \nabla^{\bot}V_{0}\star\omega(t,\cdot)\right\Vert _{L^{\infty}(\mathbb{T}^{2})}\leq\left\Vert \nabla^{\bot}V_{0}\right\Vert _{L^{\infty}(\mathbb{T}^{2})}\left\Vert \omega_{0}\right\Vert _{ L^1(\mathbb{T}^{2})}.
\]

\begin{flushright}
$\square$
\par\end{flushright}

\section{Harmonic Analysis} \label{Sec: Harmonic analysis}

\subsection{The maximal function and local BMO spaces}

Since we are working on a compact subset of $\mathbb{R}^{d}$,
we need to introduce a local variant of the $\mathrm{BMO}$ space. These spaces were studied
in \cite{Jonsson}, following the pioneering work of Fefferman-Stein on the topic. 

\textbf{Assumption $\mathbf{D}$}. We assume the following on the
pair $(\Omega,\mu)$. 

1. $\Omega\subset\mathbb{R}^{d}$ is non-empty and compact. 

2. $\mu$ is a positive Borel measure, finite on compact sets and
$\mathrm{supp}(\mu)\subset\Omega$. 

3. There are some constants $c,C$ such that for all $x\in\Omega$
and $0<r\leq1$ it holds that 

\[
cr^{d}\leq\mu(B(x,r))\leq Cr^{d}.
\]

\begin{defn}
Let assumption D hold. We say that $f\in L^{1}(\Omega,d\mu)$ is of
local bounded mean oscillation on $\Omega$ if 
\[
\left\Vert f\right\Vert _{\mathrm{bmo}(\Omega,\mu)}\coloneqq\underset{B=B(x,r),0<r\leq1,x\in\Omega}{\sup}\underset{c}{\inf}\frac{1}{\mu(B)}\int_{B}\left|f-c\right|d\mu+\underset{B=B(x,1),x\in\Omega}{\sup}\frac{1}{\mu(B)}\int_{B}\left|f\right|d\mu<\infty.
\]
The space $\mathrm{bmo}(\Omega,\mu)$ consists of all $f\in L^{1}(\Omega,d\mu)$
such that $\left\Vert f\right\Vert _{\mathrm{bmo}(\Omega,\mu)}<\infty$ and $\left\Vert f\right\Vert _{\mathrm{\mathrm{bmo(}\Omega})}$ is called the $\mathrm{bmo}$ norm. 
\end{defn}
We now define the local maximal function. 
\begin{defn}
Let 
\[
S\coloneqq\left\{ \varphi\in C_{0}^{\infty}(\mathbb{R}^{d})\left|\mathrm{supp}(\varphi)\subset B(0,1),\max\left\{ \left\Vert \varphi\right\Vert _{\infty},\left\Vert \nabla\varphi\right\Vert _{\infty}\right\} \leq1\right.\right\} .
\]
Given $f\in L^{1}(\mathbb{R}^{d},d\mu)$ we define the local maximal
function by 

\[
\mathcal{M}f(x)\coloneqq\underset{\varphi\in S} 
{\sup}\underset{0<r\leq1}{\sup}\left|\frac{1}{\left|B(x,r)\right|}\int_{\mathbb{R}^{d}}\varphi\left(\frac{x-y}{r}\right)f(y)d\mu(y)\right|,\ x\in\Omega.
\]
\end{defn}
Equipped with the above definitions we can now formulate the local
duality inequality 
\begin{thm} [\cite{Jonsson}, Theorem 4.2]
\label{local duality } Let assumption D hold. Then, it holds that 

\[
\left|\int_{\Omega}f(x)g(x)d\mu(x)\right|\leq C\left\Vert \mathcal{M}f\right\Vert _{L^{1}(\mu,\Omega)}\left\Vert g\right\Vert _{\mathrm{bmo}(\Omega,\mu)}.
\]
\end{thm}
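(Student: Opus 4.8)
The plan is to read the stated inequality as the local, measure-adapted incarnation of the Fefferman--Stein duality between the Hardy space $\mathscr{H}^{1}$ and $\mathrm{BMO}$, and to prove it through an atomic decomposition of the associated \emph{local} Hardy space. Define $h^{1}(\Omega,\mu)$ to be the set of $f\in L^{1}(\Omega,d\mu)$ for which $\|f\|_{h^{1}}\coloneqq\|\mathcal{M}f\|_{L^{1}(\mu,\Omega)}<\infty$ (this is genuinely a subspace of $L^{1}(\mu)$, since $\mathcal{M}f$ pointwise dominates a fixed multiple of $|f|$ by a Lebesgue-differentiation argument against the mollifiers in $S$). I would call $a\in L^{\infty}(\Omega,d\mu)$ a \emph{local atom} if it is supported in a ball $B=B(x_{0},r)\cap\Omega$ with $0<r\leq1$, if $\|a\|_{L^{\infty}}\leq\mu(B)^{-1}$, and if moreover $\int_{\Omega}a\,d\mu=0$ whenever $r<1$, no cancellation being imposed at the top scale $r=1$. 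The technical core of the proof is the claim that every $f\in h^{1}(\Omega,\mu)$ can be written $f=\sum_{j}\lambda_{j}a_{j}$ with $a_{j}$ local atoms and $\sum_{j}|\lambda_{j}|\lesssim\|\mathcal{M}f\|_{L^{1}(\mu,\Omega)}$.

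Granting the decomposition, the inequality follows from a per-atom estimate. For a single local atom $a$ supported in $B=B(x_{0},r)\cap\Omega$ with $r=1$ one has, directly from the second term of the $\mathrm{bmo}$ norm,
\[
\Big|\int_{\Omega}a\,g\,d\mu\Big|\leq\|a\|_{L^{\infty}}\int_{B}|g|\,d\mu\leq\frac{1}{\mu(B)}\int_{B}|g|\,d\mu\leq\|g\|_{\mathrm{bmo}(\Omega,\mu)};
\]
while if $r<1$, the cancellation $\int_{\Omega}a\,d\mu=0$ lets one subtract an arbitrary constant $c$,
\[
\Big|\int_{\Omega}a\,g\,d\mu\Big|=\Big|\int_{B}a\,(g-c)\,d\mu\Big|\leq\frac{1}{\mu(B)}\int_{B}|g-c|\,d\mu,
\]
and taking the infimum over $c$ bounds the right-hand side by $\|g\|_{\mathrm{bmo}(\Omega,\mu)}$. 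Summing over $j$ gives $|\int_{\Omega}fg\,d\mu|\leq\big(\sum_{j}|\lambda_{j}|\big)\|g\|_{\mathrm{bmo}(\Omega,\mu)}\lesssim\|\mathcal{M}f\|_{L^{1}(\mu,\Omega)}\|g\|_{\mathrm{bmo}(\Omega,\mu)}$. The pairing $\int_{\Omega}fg\,d\mu$ is first read off on finite atomic sums — which are dense in $h^{1}$ because atoms have uniformly bounded $h^{1}$ norm — and then extended by the continuity just obtained; it is well defined there since $g\in\mathrm{bmo}(\Omega,\mu)\subset L^{1}(\Omega,d\mu)$ as $\Omega$ is compact and $\mu$ finite.

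To produce the atomic decomposition I would run the Calderón--Zygmund stopping-time construction on $(\Omega,\mu)$. Set $\mathcal{O}_{k}\coloneqq\{x\in\Omega:\mathcal{M}f(x)>2^{k}\}$ for $k\in\mathbb{Z}$; these are open (by lower semicontinuity of $\mathcal{M}f$), satisfy $\mu(\mathcal{O}_{k})\leq2^{-k}\|\mathcal{M}f\|_{L^{1}}$ by Chebyshev, and $\sum_{k}2^{k}\mu(\mathcal{O}_{k})\lesssim\|\mathcal{M}f\|_{L^{1}}$ by a layer-cake comparison. Cover each $\mathcal{O}_{k}$ by a Whitney family of balls of radius comparable to — but capped at $1$ by — the distance to $\Omega\setminus\mathcal{O}_{k}$, choose a subordinate partition of unity, and form the standard differences of the resulting smoothed localizations of $f$ at consecutive levels $2^{k}$ and $2^{k+1}$. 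The Ahlfors-type regularity $cr^{d}\leq\mu(B(x,r))\leq Cr^{d}$ of point 3 of Assumption $\mathbf{D}$ — which in particular makes $\mu(B(x,r))$ comparable to the Lebesgue volume $|B(x,r)|$ used in the definition of $\mathcal{M}$ — is what supplies the doubling needed to control bounded overlap of the Whitney balls, to convert the pointwise bound $|f|\lesssim2^{k}$ on $\mathcal{O}_{k}\setminus\mathcal{O}_{k+1}$ into the $L^{\infty}$ normalization of the atoms, and to sum the $\lambda_{j}$'s against $\sum_{k}2^{k}\mu(\mathcal{O}_{k})$; the cap at scale $1$ is precisely what forces some building blocks to be top-scale atoms admitting no cancellation.

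This verification that the pieces are honest local atoms with $\sum_{j}|\lambda_{j}|\lesssim\|\mathcal{M}f\|_{L^{1}}$ — which is the substance of Theorem 4.2 in \cite{Jonsson} — is the step I expect to be the main obstacle. The only remaining point, namely checking that $\mathcal{M}$ is, up to the uniform constants of Assumption $\mathbf{D}$, equivalent to the ``grand'' maximal function used in this machinery, is routine: every kernel $|B(x,r)|^{-1}\varphi((x-\cdot)/r)$ with $\varphi\in S$ and $0<r\leq1$ is an admissible smooth approximate identity, and conversely the normalization $\max\{\|\varphi\|_{\infty},\|\nabla\varphi\|_{\infty}\}\leq1$ together with $\mathrm{supp}(\varphi)\subset B(0,1)$ places the class $S$ inside the family of test functions governing the grand maximal function.
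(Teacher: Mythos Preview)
The paper does not prove this theorem; it is quoted verbatim as Theorem 4.2 of \cite{Jonsson} and used as a black box. Your outline --- define local atoms (no cancellation at the top scale $r=1$), establish an atomic decomposition of $h^{1}(\Omega,\mu)$ via a Calder\'on--Zygmund/Whitney construction driven by the level sets of $\mathcal{M}f$, and pair each atom against $g$ using the two pieces of the $\mathrm{bmo}$ norm --- is exactly the route taken in \cite{Jonsson}, so there is nothing substantive to compare.

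One small caution: your passage ``read off on finite atomic sums \ldots\ and then extended by continuity'' glosses over the usual subtlety that the abstract continuous extension must be shown to agree with the concrete integral $\int_{\Omega}fg\,d\mu$ for general $f\in h^{1}$. In the local setting this is harmless --- John--Nirenberg on $(\Omega,\mu)$ puts $g$ in every $L^{p}(\mu)$, $p<\infty$, and the atomic series converges in $L^{1}(\mu)$ --- but it is worth stating rather than leaving implicit.
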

In what follows, we take $\Omega=\left[-\frac{1}{2},\frac{1}{2}\right]^{2}$ and $\mu$ is defined
for any Borel set $B\subset\mathbb{R}^{2}$ by 

\[
\mu(B)\coloneqq\left|\Omega\cap B\right|.
\]
In order to apply the above result freely for the pair $(\Omega,\mu)$,
we verify that this pair satisfies assumption D. 
\begin{lem}
\label{assumption D is satisfied } The pair $(\Omega,\mu)$
satisfies assumption D. 
\end{lem}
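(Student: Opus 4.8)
The plan is to verify the three requirements of Assumption $\mathbf{D}$ in turn, with $d=2$; all of them are elementary, and the only one calling for an actual (short) argument is the two-sided density estimate in item~3. First I would dispatch items 1 and 2, for which there is essentially nothing to prove: $\Omega=\left[-\frac12,\frac12\right]^{2}$ is closed and bounded, hence non-empty and compact; and $\mu$, being by definition the restriction of two-dimensional Lebesgue measure to $\Omega$ (i.e. $\mu(B)=\abs{\Omega\cap B}$), is a positive Borel measure which is finite on compact sets since $\mu(K)\leq\mu(\mathbb{R}^{2})=\abs{\Omega}=1$, and which satisfies $\mu(\mathbb{R}^{2}\setminus\Omega)=\abs{\Omega\cap(\mathbb{R}^{2}\setminus\Omega)}=0$, so that $\mathrm{supp}(\mu)\subseteq\Omega$.

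The substance is item~3. The upper bound is immediate: for all $x$ and $r>0$ one has $\mu(B(x,r))=\abs{\Omega\cap B(x,r)}\leq\abs{B(x,r)}=\pi r^{2}$, so one may take $C=\pi$. For the lower bound I would fix $x\in\Omega$ and $0<r\leq1$ and compare $B(x,r)$ with the axis-parallel square $Q(x,r)$ of side length $r$ centred at $x$: since $Q(x,r)$ has half-diagonal $r/\sqrt{2}<r$, it is contained in $B(x,r)$, and therefore $\mu(B(x,r))\geq\abs{\Omega\cap Q(x,r)}$. Now $\Omega\cap Q(x,r)$ is the Cartesian product over $i=1,2$ of the intervals $\left[-\frac12,\frac12\right]\cap\left[x_{i}-\frac r2,\,x_{i}+\frac r2\right]$; using $\abs{x_{i}}\leq\frac12$ together with $\frac r2\leq\frac12$, the half of $\left[x_{i}-\frac r2,\,x_{i}+\frac r2\right]$ lying on the side of $x_{i}$ towards the origin is entirely contained in $\left[-\frac12,\frac12\right]$, so each factor has length at least $\frac r2$. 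Hence $\abs{\Omega\cap Q(x,r)}\geq(r/2)^{2}=r^{2}/4$, and the lower bound holds with $c=\frac14$. Collecting the three items proves the lemma.

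I do not expect any real obstacle: the only genuine content is the local lower density bound, and its mechanism is simply that $\Omega$, being a cube, is uniformly ``thick'' at each of its points at every scale $r\leq1$ (convexity would already suffice), so that a ball centred inside $\Omega$ always captures a fixed proportion of its own volume; the corners are the worst case and still retain at least a quarter of the inscribed square. The constants $c=\frac14$ and $C=\pi$ obtained this way are of course far from optimal, but Assumption~$\mathbf{D}$ and hence \Cref{local duality } only require their existence.
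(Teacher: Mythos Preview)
Your proof is correct. The only point worth a brief comparison is that your route for the lower density bound differs from the paper's. The paper passes to the \emph{circumscribing} cube of $B(x,r)$ (edge length $2r$), then performs a geometric case analysis on how $\partial Q$ meets $\partial\Omega$ (same edge, adjacent edges, opposite edges) to extract a constant fraction of $|Q|$ inside $\Omega$. You instead pass to the \emph{inscribed} cube (edge length $r$), exploit the product structure $\Omega\cap Q=\prod_{i}\bigl([-\tfrac12,\tfrac12]\cap[x_i-\tfrac r2,x_i+\tfrac r2]\bigr)$, and reduce to the one-dimensional observation that each factor retains at least half its length. Your argument is shorter and avoids the somewhat informal enumeration of intersection configurations in the paper; the trade-off is a slightly worse (but irrelevant) constant.
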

\begin{proof}
Points 1 and 2 are trivial. The right hand side of the inequality
in point 3 is also clear. Let us prove the left hand side inequality.
Fix $x\in \Omega$ and $0<r\leq1$. Given a ball $B(x,r)$
with $x\in \Omega$ and $0<r\leq1$ let $Q(x,r)$ be the cube
prescribing $B(x,r)$, i.e. the cube with center $x$ and edge length
$2r$. Note that the ratio $\frac{\left|Q\right|}{\left|B\right|}$
is independent of $x,r$, and therefore it suffices to prove the inequality
for cubes with center $x$ and edge length $2r$. Given such a cube
$Q(x,r)$, consider the case where $Q\cap\mathbb{R}^{2}\setminus\Omega\neq\emptyset$
(otherwise there is nothing to prove). Note that in this case $\partial Q$
intersects $\partial\Omega$ in exactly 2 points, say $z,z'$
. If $z,z'$ are both on the same line then the upper\textbackslash lower
or right\textbackslash left half of $Q$ is contained in $\Omega$,
so that 
\[
\left|\Omega\cap Q\right|\geq\frac{1}{2}\left|Q\right|=2r^{2}.
\]
If $z'$ is on the line orthogonal to the line of $z$ then a quarter
of $Q$ is contained in $\Omega$ so that 
\[
\left|\Omega \cap Q\right|\geq\frac{1}{4}\left|Q\right|=\frac{r^{2}}{2}.
\]
Finally, consider the case where $z'$ is on the line parallel to
the line of $z$. In this case it must be that $\Omega\cap Q$
contains a rectangle with edges of lengths $r$ and $1$. Thus
\[
\left|\Omega\cap Q\right|\geq r\cdot1\geq r^{2}.
\]
 
\end{proof}
Next, we notice that $\left\Vert \cdot\right\Vert _{\mathrm{bmo}(\Omega,\mu)}$
is dominated by $\left\Vert \cdot\right\Vert _{\mathrm{BMO}(\mathbb{T}^{2})}$
as observed in the following Lemma. 
\begin{lem}
\label{ bmo vs BMO} It holds that 
\[
\left\Vert f\right\Vert _{\mathrm{bmo}(\Omega,\mu)}\leq\left\Vert f\right\Vert _{\mathrm{BMO}(\mathbb{T}^{2})}.
\]
\end{lem}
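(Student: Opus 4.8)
\textbf{Proof plan for Lemma \ref{ bmo vs BMO}.}

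The plan is to bound each of the two terms in the definition of $\left\Vert f\right\Vert _{\mathrm{bmo}(\Omega,\mu)}$ separately by $\left\Vert f\right\Vert _{\mathrm{BMO}(\mathbb{T}^{2})}$. The conceptual point is that a ball $B(x,r)$ with $x\in\Omega$ and $0<r\le 1$, intersected with $\Omega$, behaves like a fixed fraction of a cube; but in fact it is cleaner to observe that $\left\Vert f\right\Vert_{\mathrm{BMO}(\mathbb{T}^2)}$ in the paper's convention is a supremum over \emph{all} axis-parallel cubes $Q\subset\Omega$ (equivalently, by periodicity, over all axis-parallel cubes in $\mathbb R^2$ of side $\le 1$, since $f$ is $\mathbb Z^2$-periodic and any such cube can be translated to lie in a fundamental domain's worth of comparison). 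So the whole game is to dominate averages over the sets $B(x,r)\cap\Omega$ by averages over cubes.

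First I would treat the oscillation term. Fix $B=B(x,r)$ with $x\in\Omega$, $0<r\le1$. Let $Q=Q(x,r)$ be the circumscribed axis-parallel cube of side $2r$ centered at $x$, so $B\subset Q$ and, as computed in the proof of Lemma \ref{assumption D is satisfied }, $\mu(B\cap\Omega)=|B\cap\Omega|\gtrsim r^2 \sim |Q|$ — more precisely $|Q\cap\Omega|\le |Q|=4r^2$ while $\mu(B)=|B\cap\Omega|\ge c r^2$, so the ratio $|Q\cap\Omega|/\mu(B)$ is bounded by an absolute constant. Taking $c_Q$ to be the average of $f$ over the cube-set $Q\cap\Omega$ (or over $Q$ itself, using periodicity to view $Q$ as a cube in the torus on which the $\mathrm{BMO}(\mathbb{T}^2)$ average is controlled), I estimate
\[
\inf_{c}\frac{1}{\mu(B)}\int_{B\cap\Omega}|f-c|\,d\mu \le \frac{1}{\mu(B)}\int_{B\cap\Omega}|f-c_Q|\,d\mu \le \frac{1}{\mu(B)}\int_{Q\cap\Omega}|f-c_Q|\,dx \le \frac{|Q|}{\mu(B)}\cdot\frac{1}{|Q|}\int_{Q}|f-c_Q| \lesssim \left\Vert f\right\Vert_{\mathrm{BMO}(\mathbb{T}^2)}.
\]
Here I am being slightly cavalier about whether the comparison cube should be intersected with $\Omega$; the honest route is to note that since $r\le1$, $Q$ has side $2r\le 2$, and any axis-parallel cube of side $\le 2$ in $\mathbb R^2$ can be handled by the torus $\mathrm{BMO}$ norm via periodicity and subadditivity of the oscillation under splitting a side-$2$ cube into finitely many side-$\le1$ pieces — or, more simply, one restricts attention at the outset to $r\le 1/2$ and notes the $r\in(1/2,1]$ range changes constants only. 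I would also double-check the intended normalization: as stated the lemma claims constant exactly $1$, which suggests that the paper really wants the comparison to be term-by-term with matching normalizations, i.e.\ that for each admissible ball $B$ one uses the cube $Q$ with $|Q\cap\Omega|=\mu(B)$-comparable and that the $\mathrm{BMO}(\mathbb T^2)$ supremum, being over all subcubes of $\Omega$, already dominates $\frac{1}{|Q\cap\Omega|}\int_{Q\cap\Omega}|f-c|$ when $Q\cap\Omega$ is itself a cube (the easy geometric cases in Lemma \ref{assumption D is satisfied }) and dominates it up to the stated inclusion otherwise.

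Second, the term $\sup_{B=B(x,1),x\in\Omega}\frac{1}{\mu(B)}\int_{B}|f|\,d\mu$: here $B(x,1)\supset\Omega$ for every $x\in\Omega$ since $\mathrm{diam}(\Omega)=\sqrt2<2$... wait, not quite, but $B(x,1)\cap\Omega$ is a fixed large fraction of $\Omega$, and $\mu(B(x,1))\ge c$, so this is just $\lesssim \frac{1}{c}\int_\Omega |f|\,dx$, which by John–Nirenberg / the standard inequality $\frac{1}{|\Omega|}\int_\Omega|f|\le \frac{1}{|\Omega|}\int_\Omega|f-\bar f|+|\bar f| \le \left\Vert f\right\Vert_{\mathrm{BMO}(\mathbb T^2)}+ \frac{1}{|\Omega|}|\int_\Omega f|$ needs a little care — but since $\Omega=\mathbb T^2$ here is itself the whole cube $[-1/2,1/2]^2$, this term is literally $\frac{1}{|\Omega|}\int_\Omega|f|\,dx$ and $\Omega$ is an admissible cube $Q$ in the $\mathrm{BMO}(\mathbb T^2)$ supremum only through its oscillation, not its raw average — so one must observe that $\mathrm{BMO}$ is a seminorm and the lemma is presumably understood modulo constants, or the ambient $\mathrm{BMO}(\mathbb T^2)$ norm in this paper tacitly includes the full average over the torus. \textbf{The main obstacle} I anticipate is precisely this bookkeeping about constants and the raw-average term: getting the clean constant $1$ (rather than merely $\lesssim$) forces one to match definitions carefully and possibly to interpret both seminorms modulo additive constants; the geometric comparison ball-to-cube is routine given Lemma \ref{assumption D is satisfied }. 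I would resolve it by first reducing to mean-zero $f$ (harmless for both sides if both are seminorms), for which the second term vanishes in the relevant sense, and then running the cube-comparison above with the circumscribed cube, choosing the comparison constant to be the cube-average so that no loss beyond the geometric ratio $|Q|/\mu(B)$ — which the careful version of the argument keeps at $1$ by using the \emph{inscribed} rather than circumscribed cube when $B\cap\Omega$ is a genuine cube — appears.
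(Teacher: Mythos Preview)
Your approach is essentially the paper's: bound the oscillation term by comparing $B(x,r)\cap\Omega$ to an axis-parallel cube of comparable volume, then handle the $r=1$ raw-average term by $\int_{\mathbb T^2}|f|$. The one place where the paper is cleaner than your sketch is exactly where you flag being ``slightly cavalier'': rather than taking the circumscribed cube $Q$ of side $2r$ (which need not lie in $\Omega$) and then invoking periodicity or a splitting to force it back into the $\mathrm{BMO}(\mathbb T^2)$ supremum, the paper reuses the case analysis of Lemma~\ref{assumption D is satisfied } to build a genuine cube $Q'\subset\Omega$ with $B\cap\Omega\subset Q'$ and $|Q'|\sim r^2$ (enlarging the rectangle $Q\cap\Omega$ to a cube when the intersection points lie on the same or orthogonal edges, and taking $Q'=\Omega$ when they lie on parallel edges, in which case $r>\tfrac12$). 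With that $Q'$ in hand the oscillation bound is a one-line inclusion, with no periodicity gymnastics.

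Your worries about the constant are well founded: despite the displayed statement with ``$\le$'', the paper's own proof concludes only $\|f\|_{\mathrm{bmo}(\Omega,\mu)}\lesssim\|f\|_{\mathrm{BMO}(\mathbb T^2)}$, and for the second term it simply asserts $\int_{\mathbb T^2}|f|\,dx\lesssim\|f\|_{\mathrm{BMO}(\mathbb T^2)}$ without the mean-zero caveat you correctly identify. So you are not missing a trick there; the lemma is really an inequality up to an absolute constant, and in the paper's applications the relevant $f$ (namely $\nabla u$) has zero mean on $\mathbb T^2$, which is the implicit justification for that last step.
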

\begin{proof}
Given a ball $B(x,r)$ with $x\in \Omega$ and $0<r\leq1$ let  $Q(x,r)\subset \Omega$ be the cube prescribing
$B(x,r)$, i.e. the cube with center $x$ and edge length $2r$. As
before, consider the where case $\partial Q$ intersects $\partial \Omega$
in exactly 2 points, say $z,z'$ . If $z,z'$ are both on the same
line, or orthogonal lines, then enlarge the rectangle $Q\cap \Omega$
to a cube $Q'(x,r)$ with edge length $\leq2r$ and $Q\cap \Omega\subset Q'(x,r)$.
If $z,z'$ are on parallel lines then $r>\frac{1}{2}$ in which case
take $Q'(x,r)=\Omega$. With this choice of $Q'(x,r)$ we
see that 
\[
cr^{2}\leq\left|Q'(x,r)\right|\leq Cr^{2},\ B(x,r)\cap\Omega\subset Q'(x,r).
\]
Therefore 
\begin{align*}
\underset{c}{\inf}\frac{1}{\mu(B)}\int_{B}\left|f-c\right|d\mu&\leq\frac{1}{\mu(B)}\int_{B}\left|f-\frac{1}{\left|Q'\right|}\int_{Q'}f\right|d\mu\\ 
&\leq\frac{1}{\mu(B)}\int_{Q'}\left|f-\frac{1}{\left|Q'\right|}\int_{Q'}f\right|dx\lesssim\frac{1}{\left|Q'\right|}\int_{Q'}\left|f-\frac{1}{\left|Q'\right|}\int_{Q'}f\right|dx,
\end{align*}
which shows that 

\[
\underset{B(x,r),0<r\leq1,x\in\Omega}{\sup}\underset{c}{\inf}\frac{1}{\mu(B)}\int_{B}\left|f-c\right|d\mu\lesssim\underset{Q\subset \Omega}{\sup}\frac{1}{\left|Q\right|}\int_{Q}\left|f-\frac{1}{\left|Q\right|}\int_{Q}f\right|dx\leq\left\Vert f\right\Vert _{\mathrm{BMO}(\mathbb{T}^{2})}.
\]
Since 
\[
\frac{1}{\mu(B(x,1))}\int_{B(x,1)}\left|f\right|d\mu\lesssim\int_{\mathbb{T}^{2}}\left|f\right|dx\lesssim\left\Vert f\right\Vert _{\mathrm{BMO}(\mathbb{T}^{2})},
\]
we have thus proved 
\[
\left\Vert f\right\Vert _{\mathrm{bmo}(\Omega,\mu)}\lesssim_{d}\left\Vert f\right\Vert _{\mathrm{BMO}(\mathbb{T}^{2})}.
\]
\end{proof}
The combination of Theorem \ref{local duality }, Lemma \ref{assumption D is satisfied }
and Lemma \ref{ bmo vs BMO} yields 
\begin{cor} \label{Duality inequality on torus}
It holds that 
\end{cor}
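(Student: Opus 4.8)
The plan is to combine the three results just established into a single clean duality estimate on the torus, stated for the pair $(\Omega,\mu)$ introduced above. The corollary should read: for all $f\in\mathrm{BMO}(\mathbb{T}^{2})$ and all $g\in L^{1}(\Omega,d\mu)$ (equivalently $g\in L^{1}(\mathbb{T}^{2})$),
\[
\left|\int_{\mathbb{T}^{2}}f(x)g(x)\,dx\right|\leq C\left\Vert \mathcal{M}g\right\Vert _{L^{1}(\mu,\Omega)}\left\Vert f\right\Vert _{\mathrm{BMO}(\mathbb{T}^{2})}.
\]

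First I would invoke \Cref{assumption D is satisfied } to guarantee that the pair $(\Omega,\mu)$ with $\Omega=[-\tfrac12,\tfrac12]^2$ satisfies Assumption $\mathbf{D}$, so that \Cref{local duality } is applicable. Applying that theorem with the roles chosen so that the $\mathrm{bmo}$-factor falls on $f$ and the maximal-function factor on $g$ gives
\[
\left|\int_{\Omega}f(x)g(x)\,d\mu(x)\right|\leq C\left\Vert \mathcal{M}g\right\Vert _{L^{1}(\mu,\Omega)}\left\Vert f\right\Vert _{\mathrm{bmo}(\Omega,\mu)}.
\]
Then I would substitute the bound $\left\Vert f\right\Vert _{\mathrm{bmo}(\Omega,\mu)}\leq\left\Vert f\right\Vert _{\mathrm{BMO}(\mathbb{T}^{2})}$ from \Cref{ bmo vs BMO}. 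Finally, since $\mathrm{supp}(\mu)\subset\Omega$ and $\mu$ coincides with Lebesgue measure restricted to $\Omega=[-\tfrac12,\tfrac12]^2$, the integral $\int_{\Omega}fg\,d\mu$ is literally $\int_{\mathbb{T}^{2}}fg\,dx$, which identifies the left-hand side with the quantity we want.

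There is essentially no obstacle here: the corollary is a formal concatenation of three already-proved facts. The only point deserving a word of care is bookkeeping of which argument of the bilinear pairing carries which norm in \Cref{local duality } — one wants $\mathcal{M}$ applied to the ``mass-type'' factor $g$ (which in the intended application will be an integral of the form \eqref{eq:typical integral}) and the $\mathrm{bmo}$ norm applied to $f=\nabla u$ — and noting that $\mathcal{M}$ as defined acts on functions in $L^1(\mathbb{R}^d,d\mu)$, so one should either view $g$ as supported in $\Omega$ or extend it by zero; since $\mu$ is supported in $\Omega$ this is harmless. One should also state explicitly that the constant $C$ depends only on the dimension $d=2$ and the structural constants $c,C$ from Assumption $\mathbf{D}$ for $(\Omega,\mu)$, all of which are absolute here.
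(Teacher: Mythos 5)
Your proposal is correct and is essentially identical to the paper's own argument, which obtains the corollary precisely as the concatenation of \Cref{local duality }, \Cref{assumption D is satisfied } and \Cref{ bmo vs BMO}, using that $\mu$ is Lebesgue measure restricted to $\Omega=[-\tfrac12,\tfrac12]^2$. The only (immaterial) difference is notational: the paper writes the estimate as $\left|\int_{\mathbb{T}^{2}}fg\,dx\right|\leq C\left\Vert \mathcal{M}f\right\Vert_{L^{1}(\mathbb{T}^{2})}\left\Vert g\right\Vert_{\mathrm{BMO}(\mathbb{T}^{2})}$, i.e.\ with the roles of $f$ and $g$ in the bilinear pairing swapped relative to your statement.
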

\[
\left|\int_{\mathbb{T}^{2}}f(x)g(x)dx\right|\leq C\left\Vert \mathcal{M}f\right\Vert _{L^{1}(\mathbb{T}^{2})}\left\Vert g\right\Vert _{\mathrm{BMO}(\mathbb{T}^{2})}.
\]

\subsection{A Wiener type inequality}
The next Lemma is a Wiener type inequality, and its proof follows
closely \cite{Zygmund}. 
\begin{lem}
\label{Orlicz space type estimate -1-1} Suppose that $g\in L^{1}(\mathbb{T}^{d})$.
Then, for each $\eta>0$ it holds that 

\[
\int_{\mathbb{T}^{d}}\mathcal{M}g(x)dx\lesssim_{d}\eta+\int_{\mathbb{T}^{d}}\left|g\right|(x)\log_{+}\left|g\right|(x)dx+\log(\frac{1}{\eta})\left\Vert g\right\Vert _{L^{1}(\mathbb{T}^{d})}.
\]
\end{lem}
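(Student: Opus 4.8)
The plan is to establish the weak-type $(1,1)$ bound for the local maximal function $\mathcal{M}$ and then convert it into the stated $L^1$-with-logarithmic-loss estimate via the standard layer-cake / distribution-function argument that underlies Wiener's inequality, exactly in the spirit of Zygmund.

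First I would recall (or quickly verify) that $\mathcal{M}$ is of weak type $(1,1)$ on $\mathbb{T}^d$: there is a constant $c_d$ so that for all $\lambda>0$,
\[
\left|\left\{x\in\mathbb{T}^d:\mathcal{M}g(x)>\lambda\right\}\right|\leq\frac{c_d}{\lambda}\int_{\mathbb{T}^d}|g|(x)\,dx.
\]
This follows from the definition of $\mathcal{M}$ (the kernels $\varphi\in S$ are dominated by a fixed multiple of $\mathbf{1}_{B(0,1)}$, up to the $|B(x,r)|$ normalization, so $\mathcal{M}g$ is pointwise controlled by a constant times the ordinary uncentered Hardy--Littlewood maximal function restricted to radii $\leq 1$), together with the Vitali covering lemma; on the compact torus the restriction to radii $r\leq1$ causes no trouble. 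I would also note the trivial bound $\mathcal{M}g(x)\leq\|g\|_{L^\infty}$ is \emph{not} what we want here — instead we only use weak $(1,1)$ plus integrability of $|g|\log_+|g|$.

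Next, the main computation: write $\int_{\mathbb{T}^d}\mathcal{M}g\,dx=\int_0^\infty|\{\mathcal{M}g>\lambda\}|\,d\lambda$ and split the integral at the threshold $\lambda=\eta$ (this is where the free parameter $\eta$ enters). For $\lambda\leq\eta$ we bound $|\{\mathcal{M}g>\lambda\}|\leq|\mathbb{T}^d|=1$, contributing $\lesssim\eta$. For $\lambda>\eta$ we use a truncation: split $g=g\mathbf{1}_{\{|g|\leq\lambda/2\}}+g\mathbf{1}_{\{|g|>\lambda/2\}}$, observe that the maximal function of the bounded piece is $\leq\lambda/2$ pointwise (again up to the universal constant coming from $S$; one should be slightly careful and instead split at $\lambda/(2c_d)$ or similar so that the small piece genuinely cannot exceed $\lambda$), hence $\{\mathcal{M}g>\lambda\}\subset\{\mathcal{M}(g\mathbf{1}_{\{|g|>\lambda/2\}})>\lambda/2\}$, and apply weak $(1,1)$ to the large piece:
\[
|\{\mathcal{M}g>\lambda\}|\leq\frac{2c_d}{\lambda}\int_{\{|g|>\lambda/2\}}|g|\,dx.
\]
Then $\int_\eta^\infty|\{\mathcal{M}g>\lambda\}|\,d\lambda\leq 2c_d\int_\eta^\infty\frac{1}{\lambda}\int_{\{|g|>\lambda/2\}}|g|(x)\,dx\,d\lambda$, and by Tonelli one swaps the order of integration: the inner $\lambda$-integral becomes $\int_{\eta}^{2|g(x)|}\frac{d\lambda}{\lambda}=\log\!\big(2|g(x)|/\eta\big)$ when $2|g(x)|>\eta$ and is empty otherwise. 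This yields
\[
\int_{\mathbb{T}^d}\mathcal{M}g\,dx\lesssim_d\eta+\int_{\mathbb{T}^d}|g|(x)\,\log_+\!\Big(\tfrac{2|g(x)|}{\eta}\Big)dx,
\]
and finally $\log_+(2t/\eta)\leq\log_+(t)+\log(2/\eta)+\log_+(1)\lesssim\log_+(t)+\log(1/\eta)$ for $\eta$ small (and one absorbs the bounded-$\eta$ regime into the $\eta$ term), giving precisely the claimed inequality with the $\log(1/\eta)\|g\|_{L^1}$ term.

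The only genuinely delicate point is bookkeeping the universal constant $c_d$ from the definition of $S$ so that the truncation argument ($\mathcal{M}$ of the bounded part stays below the relevant level) is valid — this is routine but must be done consistently, choosing the truncation level as a fixed dimensional multiple of $\lambda$ rather than $\lambda/2$. Everything else (weak $(1,1)$, layer cake, Tonelli, the elementary $\log_+$ inequality) is standard; no step presents a real obstacle.
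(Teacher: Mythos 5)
Your proof is correct and follows essentially the same route as the paper: layer-cake decomposition split at the threshold $\eta$, truncation of $g$ at the level of integration so that the bounded piece cannot contribute, the weak $(1,1)$ maximal inequality applied to the large piece, and Tonelli to produce the $\log_+$ and $\log(1/\eta)$ terms. The constant bookkeeping you flag as delicate is actually moot: every $\varphi\in S$ satisfies $\left\Vert \varphi\right\Vert_{\infty}\leq1$ and is supported in $B(0,1)$, so the normalization by $\left|B(x,r)\right|$ gives $\mathcal{M}h\leq\left\Vert h\right\Vert_{L^{\infty}}$ with constant exactly $1$, and the inclusion $\left\{ \mathcal{M}g>2t\right\} \subset\left\{ \mathcal{M}(g\mathbf{1}_{\left\{ \left|g\right|>t\right\} })>t\right\}$ holds as stated.
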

\begin{proof}
We have
\begin{align*}
\int_{\mathbb{T}^{d}}\mathcal{M}g(x)dx&=\int_{0}^{\infty}\left|\left\{ x\in\mathbb{T}^{d}:\mathcal{M}g>t\right\} \right|dt\\
&=\frac{1}{2}\int_{0}^{\eta}\left|\left\{ x\in\mathbb{T}^{d}:\mathcal{M}g>2t\right\} \right|dt+\frac{1}{2}\int_{\eta}^{\infty}\left|\left\{ x\in\mathbb{T}^{d}:\mathcal{M}g>2t\right\} \right|dt.
\end{align*}

Evidently $\left|\left\{ x\in\mathbb{T}^{d}:\mathcal{M}g>2t\right\} \right|\leq1$,
so that 
\[
\frac{1}{2}\int_{0}^{\eta}\left|\left\{ x\in\mathbb{T}^{d}:\mathcal{M}g>t\right\} \right|dt\leq\eta.
\]
We move to handle the second integral.\textbf{ }Let us estimate the
second integral. We split $g$ to the parts in which it is smaller
and larger than $g$, i.e. $g=g_{1}+g_{2}$ where $g_{1}\coloneqq g\mathbf{1}_{\left\{ \left|g\right|>t\right\} }$
and $g_{2}=g\mathbf{1}_{\left\{ \left|g\right|\leq t\right\} }$.
Evidently $\mathcal{M}g_{2}\leq t$ so that 
\[
\left\{ x\in\mathbb{T}^{d}\left|\mathcal{M}g>2t\right.\right\} \subset\left\{ x\in\mathbb{T}^{d}\left|\mathcal{M}g_{1}>t\right.\right\} .
\]
Thus, by the Hardy-Littlewood maximal Inequality we have (Theorem
7.4 in \cite{Rudin}) 

\begin{align*}
\int_{\eta}^{\infty}\left|\left\{ x\in\mathbb{T}^{d}:\mathcal{M}g(x)>2t\right\} \right|dt&\leq\int_{\eta}^{\infty}\left|\left\{ x\in\mathbb{T}^{d}:\mathcal{M}g_{1}(x)>t\right\} \right|dt\lesssim\int_{\eta}^{\infty}\frac{1}{t}\left(\int_{\left|g\right|>t}\left|g\right|(x)dx\right)dt\\
&=\int_{\eta}^{\infty}\int_{\mathbb{T}^{d}}\frac{1}{t}\mathbf{1}_{\left\{ \left|g\right|>t\right\} }\left|g\right|(x)dxdt=\int_{\mathbb{T}^{d}}\left|g\right|(x)\int_{\eta}^{\infty}\frac{1}{t}\mathbf{1}_{\left\{ \left|g\right|>t\right\} }dtdx\\
&\leq\int_{\mathbb{T}^{d}}\left|g\right|\int_{\eta}^{\max\left\{ \left|g\right|,1\right\} }\frac{dt}{t}dx=\int_{\mathbb{T}^{d}}\left|g\right|\log_{+}(\left|g\right|)dx-\log(\eta)\left\Vert g\right\Vert _{L^{1}(\mathbb{T}^{d})}.
\end{align*}
\end{proof}

\section{\label{sec:The-gyrokinetic-limit} The quasineutral limit for Yudovich solutions }

\subsection{
Propagation in time of $\left\Vert \rho^{\varepsilon}(t,\cdot)\right\Vert _{\infty}$ and partial velocity moments}

We first recall a classical velocity moment inequality for bounded distribution functions and the conservation of energy for the Vlasov--Poisson system, which we take from \cite{Griffin-Pickering Iacobelli 2023}.
\begin{lem}
\textup{\label{interpolation inequality } (Lemma 2.3 in \cite{Griffin-Pickering Iacobelli 2023})}
Let $d\geq1$. Let $0\leq f\in L^{\infty}(\mathbb{T}^{d}\times\mathbb{R}^{d})$
satisfy for some $k>1$ 

\[
M_{k}\coloneqq\int_{\mathbb{T}^{d}\times\mathbb{R}^{d}}\left|\xi\right|^{k}f(x,\xi)dxd\xi<\infty.
\]
Then the spatial density 

\[
\rho(x)\coloneqq\int_{\mathbb{R}^{d}}f(x,\xi)d\xi
\]
belongs to $L^{1+\frac{k}{d}}(\mathbb{T}^{d})$ with the estimate
\[
\left\Vert \rho\right\Vert _{1+\frac{k}{d}}\leq C_{k,d}\left\Vert f\right\Vert _{\infty}^{\frac{k}{d+k}}M_{k}^{\frac{d}{d+k}}.
\]
\end{lem}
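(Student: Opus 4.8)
The statement to prove is the classical interpolation inequality (Lemma 2.3 from Griffin-Pickering–Iacobelli): for $0 \le f \in L^\infty(\mathbb{T}^d \times \mathbb{R}^d)$ with finite $k$-th velocity moment $M_k$, the spatial density $\rho(x) = \int_{\mathbb{R}^d} f(x,\xi)\,d\xi$ lies in $L^{1+k/d}$ with $\|\rho\|_{1+k/d} \le C_{k,d} \|f\|_\infty^{k/(d+k)} M_k^{d/(d+k)}$.

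The plan is to carry out the standard "splitting in velocity at radius $R$" argument, pointwise in $x$, then optimize in $R$. First I would fix $x$ and write, for any $R > 0$,
\[
\rho(x) = \int_{|\xi| \le R} f(x,\xi)\,d\xi + \int_{|\xi| > R} f(x,\xi)\,d\xi.
\]
The first term is bounded by $\|f\|_\infty \cdot |B_R| = c_d \|f\|_\infty R^d$. For the second term, on the region $|\xi| > R$ we have $1 < |\xi|^k / R^k$, so
\[
\int_{|\xi| > R} f(x,\xi)\,d\xi \le R^{-k} \int_{\mathbb{R}^d} |\xi|^k f(x,\xi)\,d\xi = R^{-k} m_k(x),
\]
where $m_k(x) := \int_{\mathbb{R}^d} |\xi|^k f(x,\xi)\,d\xi$, so that $\int_{\mathbb{T}^d} m_k(x)\,dx = M_k$. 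Thus $\rho(x) \le c_d \|f\|_\infty R^d + R^{-k} m_k(x)$ for every $R>0$.

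Next I would optimize the right-hand side over $R$: the minimizing choice balances the two terms, $R^{d+k} \sim m_k(x)/(\|f\|_\infty)$, i.e. $R = \big(m_k(x)/\|f\|_\infty\big)^{1/(d+k)}$ (one can ignore the constant $c_d$ or absorb it), yielding the pointwise bound
\[
\rho(x) \le C_{k,d} \, \|f\|_\infty^{k/(d+k)} \, m_k(x)^{d/(d+k)}.
\]
Then I raise this to the power $p := 1 + k/d = (d+k)/d$ and integrate in $x$: since $p \cdot \frac{d}{d+k} = 1$, we get
\[
\int_{\mathbb{T}^d} \rho(x)^{1+k/d}\,dx \le C_{k,d}' \, \|f\|_\infty^{\,\frac{k}{d+k}\cdot\frac{d+k}{d}} \int_{\mathbb{T}^d} m_k(x)\,dx = C_{k,d}' \, \|f\|_\infty^{k/d} M_k.
\]
Taking the $(d/(d+k))$-th root gives exactly $\|\rho\|_{1+k/d} \le C_{k,d}'' \|f\|_\infty^{k/(d+k)} M_k^{d/(d+k)}$, as claimed. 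A minor technical point: if $m_k(x) = 0$ for some $x$ (equivalently $\rho(x)=0$ a.e. on that slice up to the atom at $\xi=0$, which has measure zero) the pointwise bound is trivially $\rho(x)=0$, and $m_k \in L^1$ guarantees $m_k(x) < \infty$ for a.e. $x$, so the optimization is legitimate almost everywhere. There is no real obstacle here — the only "care" needed is bookkeeping the exponents $\frac{k}{d+k}$ and $\frac{d}{d+k}$ and checking that the power $p$ is precisely the one that turns $m_k^{pd/(d+k)}$ into the integrable $m_k$; this is exactly why the exponent $1+k/d$ appears and not some other number.
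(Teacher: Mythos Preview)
Your proof is correct and is exactly the standard argument for this classical velocity-moment interpolation inequality. Note that the paper does not actually supply its own proof of this lemma: it is stated with a citation to Lemma~2.3 in \cite{Griffin-Pickering Iacobelli 2023} and then used as a black box, so there is nothing further to compare.
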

\begin{lem}
\label{Conservation of energy } Let the assumptions of \Cref{thm:(Existence-for-Vlasov--Poisson)} hold
and let $f^{\varepsilon}(t,x,\xi)$ be the solution to the Cauchy
problem
\[
\left\{ \begin{array}{lc}
\partial_{t}f^{\varepsilon}+\xi\cdot\nabla_{x}f^{\varepsilon}-\nabla\Phi^{\varepsilon}\cdot\nabla_{\xi}f^{\varepsilon}=0,\ f^{\varepsilon}(0,x,\xi)=f_{0}^{\varepsilon}\\
\rho^{\varepsilon}-1=-\varepsilon\Delta\Phi^{\varepsilon},\ \rho^{\varepsilon}(t,x)=\int_{\mathbb{R}^{2}}f^{\varepsilon}(t,x,\xi)d\xi.
\end{array}\right.
\]
Then the energy 
\[
\mathcal{E}_{\varepsilon}^{0}(t)\coloneqq\frac{1}{2}\int_{\mathbb{T}^{2}\times\mathbb{R}^{2}}\left|\xi\right|^{2}f^{\varepsilon}(t,x,\xi)dxd\xi+\frac{\varepsilon}{2}\int_{\mathbb{T}^{2}}\left|\nabla\Phi^{\varepsilon}\right|^{2}(t,x)dx
\]
is conserved, i.e. $\mathcal{E}_{\varepsilon}^{0}(t)=\mathcal{E}_{\varepsilon}^{0}(0)$. 
\end{lem}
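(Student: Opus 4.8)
The plan is to differentiate $\mathcal{E}_{\varepsilon}^{0}(t)$ in time and verify that the kinetic contribution and the electrostatic contribution cancel exactly. First I would treat the kinetic term $\tfrac12\int|\xi|^{2}f^{\varepsilon}$: substituting $\partial_{t}f^{\varepsilon}=-\xi\cdot\nabla_{x}f^{\varepsilon}+\nabla\Phi^{\varepsilon}\cdot\nabla_{\xi}f^{\varepsilon}$ from the Vlasov equation, the transport contribution $-\tfrac12\int|\xi|^{2}\xi\cdot\nabla_{x}f^{\varepsilon}$ integrates to zero on $\mathbb{T}^{2}$ because $|\xi|^{2}\xi$ does not depend on $x$, while an integration by parts in $\xi$ in the force contribution, using $\nabla_{\xi}(|\xi|^{2})=2\xi$, turns it into $-\int_{\mathbb{T}^{2}}\nabla\Phi^{\varepsilon}\cdot J^{\varepsilon}\,dx$, where $J^{\varepsilon}(t,x)=\int_{\mathbb{R}^{2}}\xi f^{\varepsilon}\,d\xi$.

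Next I would treat the field term $\tfrac{\varepsilon}{2}\int|\nabla\Phi^{\varepsilon}|^{2}$. Integrating by parts in $x$ and using the Poisson relation $\rho^{\varepsilon}-1=-\varepsilon\Delta\Phi^{\varepsilon}$ gives $\tfrac{d}{dt}\tfrac{\varepsilon}{2}\int|\nabla\Phi^{\varepsilon}|^{2}=\varepsilon\int\nabla\Phi^{\varepsilon}\cdot\nabla\partial_{t}\Phi^{\varepsilon}=-\varepsilon\int\Delta\Phi^{\varepsilon}\,\partial_{t}\Phi^{\varepsilon}=\int_{\mathbb{T}^{2}}(\rho^{\varepsilon}-1)\,\partial_{t}\Phi^{\varepsilon}\,dx$. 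To evaluate the last integral I would combine the continuity equation $\partial_{t}\rho^{\varepsilon}+\mathrm{div}_{x}J^{\varepsilon}=0$ (obtained by integrating the Vlasov equation in $\xi$) with the time-differentiated Poisson relation $\partial_{t}\rho^{\varepsilon}=-\varepsilon\Delta\partial_{t}\Phi^{\varepsilon}$, which together give $\varepsilon\Delta\partial_{t}\Phi^{\varepsilon}=\mathrm{div}_{x}J^{\varepsilon}$; then, writing $\rho^{\varepsilon}-1=-\varepsilon\Delta\Phi^{\varepsilon}$ once more and integrating by parts twice in $x$, $\int(\rho^{\varepsilon}-1)\partial_{t}\Phi^{\varepsilon}=-\varepsilon\int\Phi^{\varepsilon}\Delta\partial_{t}\Phi^{\varepsilon}=-\int\Phi^{\varepsilon}\,\mathrm{div}_{x}J^{\varepsilon}=+\int_{\mathbb{T}^{2}}\nabla\Phi^{\varepsilon}\cdot J^{\varepsilon}\,dx$. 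Adding the kinetic and field contributions, $\dot{\mathcal{E}}_{\varepsilon}^{0}(t)=-\int\nabla\Phi^{\varepsilon}\cdot J^{\varepsilon}+\int\nabla\Phi^{\varepsilon}\cdot J^{\varepsilon}=0$, hence $\mathcal{E}_{\varepsilon}^{0}(t)=\mathcal{E}_{\varepsilon}^{0}(0)$.

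I expect the only genuine obstacle to be the rigorous justification of these formal manipulations rather than the algebra: differentiation under the integral sign and, above all, the integration by parts in $\xi$ with no boundary term at $|\xi|\to\infty$. For this I would invoke the regularity and the propagation of velocity moments supplied by \Cref{thm:(Existence-for-Vlasov--Poisson)} — in particular that $f^{\varepsilon}$ remains bounded and retains a finite moment of order $k_{0}>2$, so that $|\xi|^{2}f^{\varepsilon}$ and $\xi f^{\varepsilon}$ are integrable and the flux terms decay suitably in $\xi$ — and otherwise refer to the by-now standard computation carried out, e.g., in the Appendix of \cite{Brenier}. The cancellation itself is structural (it is exactly the usual energy identity for \eqref{eq:Vlasov Equation} in the quasineutral scaling) and robust; everything comes down to this integrability bookkeeping.
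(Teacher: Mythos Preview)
Your argument is correct and is exactly the standard derivation of the energy identity for \eqref{eq:Vlasov Equation}. Note, however, that the paper does not actually supply a proof of this lemma: it is stated as a classical fact ``which we take from \cite{Griffin-Pickering Iacobelli 2023}'', so there is no proof in the paper to compare against. Your computation---differentiating in time, using the Vlasov equation to rewrite the kinetic contribution as $-\int\nabla\Phi^{\varepsilon}\cdot J^{\varepsilon}$, and combining the time-differentiated Poisson relation with the continuity equation to rewrite the field contribution as $+\int\nabla\Phi^{\varepsilon}\cdot J^{\varepsilon}$---is precisely the textbook proof, and your remark that the only subtlety lies in justifying the integrations by parts (via the moment propagation provided by \Cref{thm:(Existence-for-Vlasov--Poisson)}) is also on point.
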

The combination of \Cref{interpolation inequality } and \Cref{Conservation of energy } gives a uniform estimate in time and
$\varepsilon$ on $\left\Vert \rho^{\varepsilon}(t,\cdot)\right\Vert _{2}$,
i.e. 
\begin{lem}
\label{Estimate on L2 norm of density } Let the assumptions $\mathbf{H1}$-$\mathbf{H2}$ hold. Suppose that $\int_{\mathbb{R}^{2}}\left|\xi\right|^{2}f_{0}^{\varepsilon}(x,\xi)d\xi\leq M_{2}$.
Then 
\[
\left\Vert \rho^{\varepsilon}(t,\cdot)\right\Vert _{2}\leq C\sqrt{M_{2}}\sqrt{\overline{M}_{\varepsilon}},
\]
where $\overline{M}_{\varepsilon}$ is from $\mathbf{H2}$.
\end{lem}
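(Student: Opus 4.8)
The plan is to combine the two lemmas just cited with the fact that the spatial $L^{\infty}$ norm of the solution is merely transported, so that the proof is essentially bookkeeping.

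\textit{Step 1: interpolation.} Apply \Cref{interpolation inequality } at each fixed time $t$ with $d=2$ and $k=2$. Since $1+\frac{k}{d}=2$, this places $\rho^{\varepsilon}(t,\cdot)$ in $L^{2}(\mathbb{T}^{2})$ and gives
\[
\left\Vert \rho^{\varepsilon}(t,\cdot)\right\Vert _{2}\leq C_{2,2}\left\Vert f^{\varepsilon}(t,\cdot,\cdot)\right\Vert _{\infty}^{1/2}M_{2}(t)^{1/2},\qquad M_{2}(t)\coloneqq\int_{\mathbb{T}^{2}\times\mathbb{R}^{2}}\left|\xi\right|^{2}f^{\varepsilon}(t,x,\xi)\,dx\,d\xi.
\]
It then remains to bound the two factors on the right uniformly in $t$ and $\varepsilon$.

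\textit{Step 2: the $L^\infty$ factor.} The Vlasov equation transports $f^{\varepsilon}$ along the characteristic flow of the phase-space field $(\xi,-\nabla\Phi^{\varepsilon})$, which is divergence-free in $(x,\xi)$ and hence measure-preserving; consequently all $L^{p}$ norms of $f^{\varepsilon}(t,\cdot,\cdot)$ are conserved (this is part of \Cref{thm:(Existence-for-Vlasov--Poisson)}). In particular $\left\Vert f^{\varepsilon}(t)\right\Vert _{\infty}=\left\Vert f_{0}^{\varepsilon}\right\Vert _{\infty}\leq\overline{M}_{\varepsilon}$ by $\mathbf{H2}$.

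\textit{Step 3: the second-moment factor.} By \Cref{Conservation of energy }, the energy $\mathcal{E}_{\varepsilon}^{0}(t)=\tfrac12 M_{2}(t)+\tfrac{\varepsilon}{2}\int_{\mathbb{T}^{2}}\left|\nabla\Phi^{\varepsilon}(t)\right|^{2}$ is constant in time, and since its field part is nonnegative,
\[
M_{2}(t)\leq 2\mathcal{E}_{\varepsilon}^{0}(t)=2\mathcal{E}_{\varepsilon}^{0}(0)=\int_{\mathbb{T}^{2}\times\mathbb{R}^{2}}\left|\xi\right|^{2}f_{0}^{\varepsilon}+\varepsilon\int_{\mathbb{T}^{2}}\left|\nabla\Phi^{\varepsilon}(0)\right|^{2}\leq M_{2}+2E_{\varepsilon}(0),
\]
where in the last inequality we used the hypothesis on $\int\left|\xi\right|^{2}f_{0}^{\varepsilon}$ and discarded the nonnegative modulated-kinetic part from the definition of the modulated energy $E_{\varepsilon}(0)$. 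Under $\mathbf{H3}$ (indeed, already under the mere boundedness of $E_{\varepsilon}(0)$ it entails) the initial field energy $\varepsilon\int\left|\nabla\Phi^{\varepsilon}(0)\right|^{2}\lesssim E_{\varepsilon}(0)=O(1)$, so $M_{2}(t)\lesssim M_{2}$. Inserting the bounds of Steps 2 and 3 into the display of Step 1 yields $\left\Vert \rho^{\varepsilon}(t,\cdot)\right\Vert _{2}\leq C\sqrt{M_{2}}\sqrt{\overline{M}_{\varepsilon}}$, uniformly in $t\geq 0$ and $\varepsilon$.

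The argument has no real obstacle: it is a corollary of \Cref{interpolation inequality } and \Cref{Conservation of energy }. The only point requiring a moment's care is that one cannot control $M_{2}(t)$ by propagating the second velocity moment along the characteristics directly — the force term $-\nabla\Phi^{\varepsilon}$ can pump in kinetic energy — so one genuinely uses the exact conservation in \Cref{Conservation of energy } rather than a Grönwall estimate, and one must keep the initial field energy $\varepsilon\int\left|\nabla\Phi^{\varepsilon}(0)\right|^{2}$ under control, which is exactly where $\mathbf{H3}$ enters.
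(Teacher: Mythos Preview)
Your proof is correct and follows exactly the route the paper indicates: the lemma is stated there simply as ``the combination of \Cref{interpolation inequality } and \Cref{Conservation of energy }'', and your three steps carry this out. You are in fact more careful than the paper, which lists only $\mathbf{H1}$--$\mathbf{H2}$ in the hypotheses but, as you observe, genuinely needs a bound on the initial field energy $\varepsilon\int|\nabla\Phi^{\varepsilon}(0)|^{2}$; your invocation of $\mathbf{H3}$ (or merely $E_{\varepsilon}(0)=O(1)$) to absorb this term is the right fix.
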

The main purpose of this section is to study the propagation in time
of the $L^{\infty}$ norm of the density and partial velocity moments of a given a solution to the Vlasov-Poisson equation \eqref{eq:Vlasov Equation}. The
fact that we are able to get \emph{polynomial} explicit bounds in terms of $\varepsilon$
will prove to be crucial for closing the estimate for the modulated
energy. Our approach follows closely that of \cite[section 5]{Griffin-Pickering Iacobelli 2023}, adapted to the case of electrons.
Define the time dependent quantity 
\[
Q_{\ast}(t)\coloneqq\underset{(x,\xi)}{\sup}\left|\xi-\Xi_{\left(t,x,\xi\right)}(0)\right|,
\]
where $\Xi_{\left(t,x,\xi\right)}$ is the velocity characteristic of the Vlasov--Poisson system defined just below at \eqref{eq:-3-1-1-1}.
The first step is to obtain a bound on $\left\Vert \rho^{\varepsilon}(t,\cdot)\right\Vert _{\infty}$ and partial velocity moments 
by means of $Q_{\ast}(t)$, which is the content of the following
lemma. 

\begin{lem}
\label{bound on density } Let assumptions $\mathbf{H1}$-$\mathbf{H2}$ hold. Let $f^{\varepsilon}(t,x,\xi)$ be a solution to
the Cauchy problem 

\[
\left\{ \begin{array}{lc}
\partial_{t}f^{\varepsilon}+\xi\cdot\nabla_{x}f^{\varepsilon}-\nabla\Phi^{\varepsilon}\cdot\nabla_{\xi}f^{\varepsilon}=0,\ f^{\varepsilon}(0,x,\xi)=f_{0}^{\varepsilon} \vspace{0.25 cm} \\
\rho^{\varepsilon}-1=-\varepsilon\Delta\Phi^{\varepsilon},\ \rho^{\varepsilon}(t,x)=\int_{\mathbb{R}^{2}}f^{\varepsilon}(t,x,\xi)d\xi.
\end{array}\right.
\]
Given $0\leq k\leq 2$ let $m_{k}^{\varepsilon}$ designate the $k$-th partial velocity moment, i.e.  
\begin{align*}
m_{k}^{\varepsilon}(t,x)\coloneqq \int_{\mathbb{R}^{2}} \left\vert \xi \right\vert^{k}f^{\varepsilon}(t,x,\xi)d\xi.    
\end{align*} Then 
\[
\left\Vert m_{k}^{\varepsilon}(t,\cdot)\right\Vert _{\infty}\leq C(1+Q_{\ast}^{k+2}(t))
\]
where 
\[
C=C(\overline{M}_\varepsilon,k_{0},k)=\overline{M}_{\varepsilon}c_{k_{0},k}.
\]
In particular, 
\begin{align}
\left\Vert \rho^{\varepsilon}(t,\cdot) \right\Vert_{\infty}\leq    C\left(1+Q_{\ast}^{2}(t)\right) \label{density est}
\end{align}
and 
\begin{align}
\left\Vert m^{\varepsilon}_{2}(t,\cdot) \right\Vert_{\infty}\leq    C\left(1+Q_{\ast}^{4}(t)\right) \label{partial velocity est}
\end{align}
where $C=C(\overline{M}_\varepsilon,k_{0})=\overline{M}_\varepsilon c_{k_{0}}$. 
\end{lem}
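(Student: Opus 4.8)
The plan is to combine the method of characteristics with the moment decay of the initial datum, following the electron adaptation of \cite{Griffin-Pickering Iacobelli 2023}. Let $s\mapsto\big(X_{(t,x,\xi)}(s),\Xi_{(t,x,\xi)}(s)\big)$ denote the characteristic flow of \eqref{eq:Vlasov Equation} issued from $(x,\xi)$ at time $t$, i.e. the solution of $\dot X=\Xi$, $\dot\Xi=-\nabla\Phi^\varepsilon(\cdot,X)$ with $(X(t),\Xi(t))=(x,\xi)$. Since \eqref{eq:Vlasov Equation} transports $f^\varepsilon$ along this measure-preserving flow, I will use the representation $f^\varepsilon(t,x,\xi)=f_0^\varepsilon\big(X_{(t,x,\xi)}(0),\Xi_{(t,x,\xi)}(0)\big)$, which in particular gives $\|f^\varepsilon(t,\cdot,\cdot)\|_\infty=\|f_0^\varepsilon\|_\infty\le\overline M_\varepsilon$ by $\mathbf{H2}$. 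Fixing $(t,x)$, the idea is to split the $\xi$-integral defining $m_k^\varepsilon(t,x)$ at the threshold $|\xi|=2Q_\ast(t)$.

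On the inner region $\{|\xi|\le 2Q_\ast(t)\}$ I would only use conservation of the sup norm together with the two-dimensional identity $\int_{|\xi|\le R}|\xi|^k\,d\xi=c_k R^{k+2}$, which yields
\[
\int_{|\xi|\le 2Q_\ast(t)}|\xi|^k f^\varepsilon(t,x,\xi)\,d\xi\le\|f_0^\varepsilon\|_\infty\int_{|\xi|\le 2Q_\ast(t)}|\xi|^k\,d\xi=c_k\,2^{k+2}\,\|f_0^\varepsilon\|_\infty\,Q_\ast^{k+2}(t).
\]
On the outer region $\{|\xi|>2Q_\ast(t)\}$ I would use the definition of $Q_\ast$: from $|\xi-\Xi_{(t,x,\xi)}(0)|\le Q_\ast(t)$ it follows that $|\Xi_{(t,x,\xi)}(0)|\ge|\xi|-Q_\ast(t)\ge\tfrac12|\xi|$, so that, via the pointwise consequence $f_0^\varepsilon(y,\zeta)\le\overline M_\varepsilon(1+|\zeta|^{k_0})^{-1}$ of $\mathbf{H2}$,
\[
f^\varepsilon(t,x,\xi)=f_0^\varepsilon\big(X_{(t,x,\xi)}(0),\Xi_{(t,x,\xi)}(0)\big)\le\frac{\overline M_\varepsilon}{1+(|\xi|/2)^{k_0}}\le\frac{2^{k_0}\,\overline M_\varepsilon}{1+|\xi|^{k_0}},
\]
and therefore
\[
\int_{|\xi|>2Q_\ast(t)}|\xi|^k f^\varepsilon(t,x,\xi)\,d\xi\le 2^{k_0}\overline M_\varepsilon\int_{\mathbb{R}^2}\frac{|\xi|^k}{1+|\xi|^{k_0}}\,d\xi=:c_{k_0,k}\,\overline M_\varepsilon.
\]
Summing the two contributions and bounding $\|f_0^\varepsilon\|_\infty\le\overline M_\varepsilon$ gives $\|m_k^\varepsilon(t,\cdot)\|_\infty\le\overline M_\varepsilon c_{k_0,k}\big(1+Q_\ast^{k+2}(t)\big)$, and \eqref{density est}, \eqref{partial velocity est} are the cases $k=0$ and $k=2$.

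The proof is essentially soft and I do not expect a serious obstacle at this stage: the only genuine constraint is that $\int_{\mathbb{R}^2}|\xi|^k(1+|\xi|^{k_0})^{-1}\,d\xi$ be finite, which requires $k_0>k+2$ and is precisely why $\mathbf{H2}$ asks for $k_0>4$ (so that every $k\in[0,2]$ is allowed). The remaining care is purely bookkeeping — checking that the constant depends only on $\overline M_\varepsilon$, $k_0$, $k$, and noticing that no estimate on the force $\nabla\Phi^\varepsilon$ enters here, since all the $t$- and $\varepsilon$-dependence is funneled into $Q_\ast(t)$, whose control is the subject of the next part of the subsection.
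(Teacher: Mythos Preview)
Your proof is correct and follows essentially the same approach as the paper: both use the Lagrangian representation $f^\varepsilon(t,x,\xi)=f_0^\varepsilon(X(0),\Xi(0))$, the pointwise decay $f_0^\varepsilon\le\overline M_\varepsilon(1+|\zeta|^{k_0})^{-1}$ from $\mathbf{H2}$, and the inequality $|\Xi_{(t,x,\xi)}(0)|\ge(|\xi|-Q_\ast(t))_+$ to split the velocity integral into an inner part contributing $Q_\ast^{k+2}$ and an outer tail whose finiteness forces $k_0>k+2$. The only cosmetic difference is that the paper first establishes the uniform pointwise bound $|\xi|^k f^\varepsilon\le\overline M_\varepsilon|\xi|^k/(1+(|\xi|-Q_\ast)_+^{k_0})$ and splits the resulting radial integral at $r=Q_\ast$, whereas you split the $\xi$-integral directly at $|\xi|=2Q_\ast$ and use the cruder $|\Xi(0)|\ge|\xi|/2$ on the tail; the constants differ but the argument is the same.
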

\textit{Proof}. Consider the trajectories 
\begin{equation}
\left\{ \begin{array}{lc}
\frac{d}{ds}X_{\left(t,x,\xi\right)}(s)=\Xi_{\left(t,x,\xi\right)}(s),\ X_{\left(t,x,\xi\right)}(t)=x \vspace{0.25 cm}\\
\frac{d}{ds}\Xi_{\left(t,x,\xi\right)}(s)=\nabla\Phi_{\left(t,x,\xi\right)}^{\varepsilon}(s,X(s)),\ \Xi_{\left(t,x,\xi\right)}(t)=\xi.
\end{array}\right.\label{eq:-3-1-1-1}
\end{equation}
We can write the solution $f^{\varepsilon}(t,x,\xi)$ in a Lagrangian
manner, namely 

\[
f^{\varepsilon}(t,x,\xi)=f_{0}^{\varepsilon}(X_{\left(t,x,\xi\right)}(0),\Xi_{\left(t,x,\xi\right)}(0))
\]
for a.e. $(x,\xi)\in\mathbb{T}^{2}\times\mathbb{R}^{2}$. Therefore,
by assumption we have the estimate 

\[
\left\vert \xi \right\vert^{k}f^{\varepsilon}(t,x,\xi)\leq\frac{\left\vert \xi \right\vert^{k}\overline{M}_{\varepsilon}}{1+\left|\Xi_{\left(t,x,\xi\right)}(0)\right|^{k_{0}}},\ (t,x,\xi)\in[0,T]\times\mathbb{T}^{2}\times\mathbb{R}^{2}.
\]
Furthermore 
\[
\left|\Xi_{\left(t,x,\xi\right)}(0)\right|\geq\left|\left|\xi\right|-\left|\xi-\Xi_{\left(t,x,\xi\right)}(0)\right|\right|\geq\left(\left|\xi\right|-\left|\xi-\Xi_{\left(t,x,\xi\right)}(0)\right|\right)_{+}\geq\left(\left|\xi\right|-Q_{\ast}(t)\right)_{+},
\]
and as a result we get 

\[
\left\vert \xi \right\vert^{k}f^{\varepsilon}(t,x,\xi)\leq\frac{\left\vert \xi \right\vert^{k}\overline{M}_{\varepsilon}}{1+\left(\left|\xi\right|-Q_{\ast}(t)\right)_{+}^{k_{0}}}.
\]
Integration in $\xi$ of the last inequality yields the estimate 
\begin{align}
m_{k}^{\varepsilon}(t,x)\leq\overline{M}_{\varepsilon}\int_{\mathbb{R}^{2}}\frac{\left|\xi\right|^{k}}{1+\left(\left|\xi\right|-Q_{\ast}(t)\right)_{+}^{k_{0}}}d\xi.
\label{part moment ine}
\end{align}
Noticing that the integrand in the last integral is radial in $\xi$,
we can write 

\[
\int_{\mathbb{R}^{2}}\frac{\left|\xi\right|^{k}}{1+\left(\left|\xi\right|-Q_{\ast}(t)\right)_{+}^{k_{0}}}d\xi=\int_{0}^{\infty}\frac{r^{k+1}}{1+\left(r-Q_{\ast}(t)\right)_{+}^{k_{0}}}dr.
\]
The estimate for the last integral proceeds as follows. First we split
it as 

\begin{equation}
\int_{0}^{Q_{\ast}(t)}r^{k+1}dr+\int_{Q_{\ast}(t)}^{\infty}\frac{r^{k+1}}{1+\left(r-Q_{\ast}(t)\right)_{+}^{k_{0}}}dr=\frac{1}{k+2}Q_{\ast}^{k+2}(t)+\int_{0}^{\infty}\frac{\left(r+Q_{\ast}(t)\right)^{k+1}}{1+r^{k_{0}}}dr.\label{eq:-6-1}
\end{equation}
Since $k_{0}>4$ and $0\leq k\leq 2$, the second integral in the right hand side of Equation
\eqref{eq:-6-1} is bounded by
\[
\int_{0}^{\infty}\frac{(r+Q_{\ast}(t))^{k+1}} {1+r^{k_{0}}}dr\leq
2^{k+1}\int_{0}^{\infty}\frac{r^{k+1}} {1+r^{k_{0}}}dr+ 2^{k+1}Q_{\ast}^{k+1}(t)\int_{0}^{\infty} \frac{1}{1+r^{k_{0}}}dr\leq
c'_{k_{0},k}(1+Q_{\ast}^{k+1}(t)).
\]
To conclude, \eqref{part moment ine} and the last inequality shows that 
\[
m^{\varepsilon}_{k}(t,x)\leq\frac{\overline{M}_{\varepsilon}}{k+2}Q_{\ast}^{k+2}(t)+\overline{M}_{\varepsilon}c'_{k_{0},k}(1+Q_{\ast}^{k+1}(t))\leq C(\overline{M}_{\varepsilon},k_{0},k)(1+Q_{\ast}^{k+2}(t)).
\]
Invoking the last inequality with $k=0$ and $k=2$ yields \eqref{density est} and \eqref{partial velocity est} respectively. 
\qed
\\

The second step is the following functional inequality which ultimately
leads to an estimate on $\left\Vert \nabla\Phi^{\varepsilon}\right\Vert _{L^{\infty}(\mathbb{T}^{2})}$. 

\begin{lem}
\label{ESTIMATE ON FORCE FIELD } Suppose that $\rho^\varepsilon\in L^{\infty}(\mathbb{T}^{2})$ and $\left\Vert \rho^\varepsilon\right\Vert _{L^{1}(\mathbb{T}^{2})}=1$.
Denote
\[
\Phi^{\varepsilon}(x)\coloneqq\frac{1}{\varepsilon}(-\Delta)^{-1}\left(\rho^\varepsilon-1\right).
\]
Then, for all $l\in [0,1[$ it holds that 
\[
\left\Vert \nabla\Phi^{\varepsilon}\right\Vert _{L^{\infty}(\mathbb{T}^{2})}\leq\frac{2}{\varepsilon}\left\Vert \nabla V_{0}\right\Vert _{L^{\infty}(\mathbb{T}^{2})}+\frac{1}{\varepsilon}l\left(\left\Vert \rho^\varepsilon\right\Vert _{L^{\infty}(\mathbb{T}^{2})}+1\right)+\frac{1}{\varepsilon}\sqrt{\left|\log\left(l\right)\right|}\left(\left\Vert \rho^\varepsilon\right\Vert _{L^{2}(\mathbb{T}^{2})}+1\right).
\]
\end{lem}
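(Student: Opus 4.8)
The idea is to split the potential $\Phi^{\varepsilon}$ into a ``near'' piece and a ``far'' piece using the decomposition $V=V_{0}+V_{1}$ from \Cref{green function on torus -1}, together with a truncation at scale $l$ of the singular part $V_{1}(x)=-\frac{1}{2\pi}\log|x|$. Writing $\varpi\coloneqq\rho^{\varepsilon}-1$, which has zero mean and $L^{1}$ norm at most $\|\rho^{\varepsilon}\|_{L^{1}}+1=2$, we have
\[
\varepsilon\nabla\Phi^{\varepsilon}(x)=\int_{\mathbb{T}^{2}}\nabla V_{0}(x-y)\varpi(y)\,dy+\int_{\mathbb{T}^{2}}\nabla V_{1}(x-y)\chi_{l}(x-y)\varpi(y)\,dy+\int_{\mathbb{T}^{2}}\nabla V_{1}(x-y)(1-\chi_{l}(x-y))\varpi(y)\,dy,
\]
where $\chi_{l}$ is (the periodization of) a cutoff supported in $B_{l}$. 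The first term is immediately bounded by $\|\nabla V_{0}\|_{L^{\infty}}(\|\rho^{\varepsilon}\|_{L^{1}}+1)=2\|\nabla V_{0}\|_{L^{\infty}}$, which accounts for the first term in the claimed bound.

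For the short-range term, one uses $|\nabla V_{1}(z)|\lesssim |z|^{-1}$ and estimates the convolution kernel $|z|^{-1}\mathbf{1}_{|z|\le l}$ in $L^{1}$: its $L^{1}$ norm over $\mathbb{R}^{2}$ is $\lesssim l$. Pairing this against $\varpi\in L^{\infty}$ (via Young's inequality, $\|K*\varpi\|_{L^{\infty}}\le \|K\|_{L^{1}}\|\varpi\|_{L^{\infty}}$) yields a bound $\lesssim l\,(\|\rho^{\varepsilon}\|_{L^{\infty}}+1)$, which is the second term (here I would be slightly careful that the implicit constants and the cutoff can be chosen so the constant is exactly as stated, or absorb them). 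For the long-range term, I would instead pair against $\varpi\in L^{2}$ by Cauchy--Schwarz: $\big|\int \nabla V_{1}(x-y)(1-\chi_{l}(x-y))\varpi(y)\,dy\big|\le \|\,|\nabla V_{1}|\,\mathbf{1}_{l\le|\cdot|\le C}\,\|_{L^{2}}\|\varpi\|_{L^{2}}$, and the kernel's $L^{2}$ norm on the annulus $\{l\le |z|\lesssim 1\}$ is $\big(\int_{l}^{C} r^{-2}\cdot r\,dr\big)^{1/2}\lesssim \sqrt{|\log l|}$. This produces the third term $\frac{1}{\varepsilon}\sqrt{|\log l|}(\|\rho^{\varepsilon}\|_{L^{2}}+1)$. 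Summing the three contributions and dividing by $\varepsilon$ gives the stated inequality.

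The main obstacle is bookkeeping rather than conceptual: one must set up the periodic cutoff and the splitting of $V_{1}$ so that the three numerical constants come out as written (the factors $2$, $1$, $1$), which requires choosing the cutoff appropriately and using that on $\mathbb{T}^{2}$ all relevant distances are $\le \tfrac{1}{\sqrt 2}$, and that $l<1$ so that the annulus $\{l\le|z|\le C\}$ is nonempty and the logarithm has the right sign. One should also make sure that the smooth remainder $\nabla V_{0}$ is genuinely bounded (this is \Cref{green function on torus -1}(b)) and that the truncated singular kernel lands in the right Lebesgue spaces on the torus. I would present the estimate for general $l\in[0,1[$ and remark that it is later optimized in $l$ (as a function of $\|\rho^{\varepsilon}\|_{L^{\infty}}$) when this lemma is fed into the bound on $Q_{\ast}(t)$.
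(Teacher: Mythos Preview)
Your proposal is correct and follows essentially the same approach as the paper's proof: decompose $V=V_{0}+V_{1}$, bound the smooth $V_{0}$ contribution by $\|\nabla V_{0}\|_{L^{\infty}}\|\rho^{\varepsilon}-1\|_{L^{1}}\le 2\|\nabla V_{0}\|_{L^{\infty}}$, then split the singular $V_{1}$ contribution at scale $l$, using an $L^{1}$--$L^{\infty}$ (Young) estimate on the near part and a Cauchy--Schwarz ($L^{2}$--$L^{2}$) estimate on the far part. The only cosmetic difference is that the paper uses the sharp indicator $\mathbf{1}_{|\cdot|\le l}$ rather than a smooth cutoff $\chi_{l}$, which in fact makes the constant tracking slightly cleaner and removes the bookkeeping worry you flag.
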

\textit{Proof}.
\[
\nabla\Phi^{\varepsilon}=\frac{1}{\varepsilon}\int_{}\frac{x-y}{\left|x-y\right|^{2}}\left(\rho^\varepsilon-1\right)(t,y)dy+\frac{1}{\varepsilon}\nabla V_{0}\star\left(\rho^\varepsilon-1\right).
\]
By \Cref{green function on torus -1}, (b) we have $\nabla V_{0}\in C^{\infty}(\mathbb{T}^{2})$
and therefore we can estimate 
\begin{align*}
\left\Vert \nabla V_{0}\star\left(\rho^\varepsilon-1\right)\right\Vert _{L^{\infty}(\mathbb{T}^{2})}
\leq\left\Vert \nabla V_{0}\right\Vert _{L^{\infty}(\mathbb{T}^{2})}\left\Vert \rho^\varepsilon-1\right\Vert _{_{L^{1}(\mathbb{T}^{2})}}\leq2\left\Vert \nabla V_{0}\right\Vert _{L^{\infty}(\mathbb{T}^{2})}.
\end{align*}
As for the first integral we split it as 

\begin{equation}
=\frac{1}{\varepsilon}\int_{\left|x-y\right|\leq l}\frac{x-y}{\left|x-y\right|^{2}}\left(\rho^\varepsilon-1\right)(t,y)dy+\frac{1}{\varepsilon}\int_{\left|x-y\right|\geq l}\frac{x-y}{\left|x-y\right|^{2}}\left(\rho^\varepsilon-1\right)(t,y)dy.\label{eq:}
\end{equation}
For the first integral in \eqref{eq:}), note that 

\begin{equation}
\left|\int_{\left|x-y\right|\leq l}\frac{x-y}{\left|x-y\right|^{2}}\left(\rho^\varepsilon-1\right)(t,y)dy\right|\leq\left\Vert \frac{\mathbf{1}_{\left|\cdot\right|\leq l}}{\left|\cdot\right|}\star(\rho^\varepsilon-1)\right\Vert _{L^{\infty}(\mathbb{T}^{2})}\leq l\left\Vert \rho^\varepsilon-1\right\Vert _{L^{\infty}(\mathbb{T}^{2})}\leq l\left(\left\Vert \rho^\varepsilon\right\Vert _{L^{\infty}(\mathbb{T}^{2})}+1\right).\label{eq:-7-2}
\end{equation}
For the second integral, observe that 

\begin{align*}
\left\Vert \frac{\mathbf{1}_{\left|\cdot\right|\geq l}}{\left|\cdot\right|}\star\left(\rho^\varepsilon-1\right)\right\Vert _{L^{\infty}(\mathbb{T}^{2})}&\leq\left\Vert \frac{\mathbf{1}_{\left|\cdot\right|\geq l}}{\left|\cdot\right|}\right\Vert _{L^{2}(\mathbb{T}^{2})}\left\Vert \rho^\varepsilon\right\Vert _{L^{2}(\mathbb{T}^{2})}\\
&\leq\sqrt{\left|\log\left(l\right)\right|}\left(\left\Vert \rho^\varepsilon\right\Vert _{L^{2}(\mathbb{T}^{2})}+1\right).
\end{align*}
Gathering the inequalities gives 
\[
\left\Vert \nabla\Phi^{\varepsilon}\right\Vert _{L^{\infty}(\mathbb{T}^{2})}\leq\frac{2}{\varepsilon}\left\Vert \nabla V_{0}\right\Vert _{L^{\infty}(\mathbb{T}^{2})}+\frac{1}{\varepsilon}l\left(\left\Vert \rho^\varepsilon\right\Vert _{L^{\infty}(\mathbb{T}^{2})}+1\right)+\frac{1}{\varepsilon}\sqrt{\left|\log\left(l\right)\right|}\left(\left\Vert \rho^\varepsilon\right\Vert _{L^{2}(\mathbb{T}^{2})}+1\right).
\]

\begin{flushright}
$\square$
\par\end{flushright}
We are now in a good position to study quantitatively the propagation
in time of $\left\Vert \rho^{\varepsilon}(t,\cdot)\right\Vert _{\infty}$ and $\left\Vert m_{2}^{\varepsilon}(t,\cdot) \right\Vert_{\infty}$.  
\begin{lem}
\label{propagation of the density }
Let assumptions $\mathbf{H1}$-$\mathbf{H2}$ hold. Let $f^{\varepsilon}(t,x,\xi)$ be the solution to the Cauchy problem
\eqref{eq:Vlasov Equation} with initial data $f_{0}^{\varepsilon}$. Then, for
all $t\in[0,\infty)$ it holds that 

\[
\left\Vert \rho^{\varepsilon}(t,\cdot)\right\Vert _{\infty}\lesssim \Gamma_{\varepsilon}(t),
\]
and 
\[\left\Vert m_{2}^{\varepsilon}(t,\cdot)\right\Vert_{\infty}\lesssim \Gamma_{\varepsilon}^{2}(t)\]
where 
\begin{align*}
\Gamma_{\varepsilon}(t)\coloneqq \mathbf{M}_{\varepsilon}+\mathbf{M}_{\varepsilon}^{3}t^{2}\left(1+\log\left(1+\mathbf{M}_{\varepsilon}t\right)\right)   
\end{align*}
and where 
$\mathbf{M}_{\varepsilon}\coloneqq\frac{1+(\overline{M_{\varepsilon}})^{\frac{1}{2}}}{\varepsilon}$.

\end{lem}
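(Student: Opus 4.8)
The plan is to derive a closed integral inequality for the maximal velocity deviation $Q_{\ast}(t)$, resolve it by an Osgood-type argument, and substitute the outcome into \Cref{bound on density }. The starting point is Duhamel's formula for the velocity characteristic in \eqref{eq:-3-1-1-1}: integrating between $0$ and $t$ we get, for every $(x,\xi)\in\mathbb{T}^{2}\times\mathbb{R}^{2}$,
\[
\xi-\Xi_{(t,x,\xi)}(0)=\int_{0}^{t}\nabla\Phi^{\varepsilon}\bigl(s,X_{(t,x,\xi)}(s)\bigr)\,ds ,
\]
and taking absolute values followed by the supremum over $(x,\xi)$ yields
\[
Q_{\ast}(t)\leq\int_{0}^{t}\left\Vert \nabla\Phi^{\varepsilon}(s,\cdot)\right\Vert _{L^{\infty}(\mathbb{T}^{2})}\,ds .
\]
Everything therefore reduces to bounding $\left\Vert \nabla\Phi^{\varepsilon}(s,\cdot)\right\Vert _{\infty}$ by a slowly growing function of $Q_{\ast}(s)$.

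For this I would invoke \Cref{ESTIMATE ON FORCE FIELD }, feeding into it the density estimate \eqref{density est}, i.e.\ $\left\Vert \rho^{\varepsilon}(s,\cdot)\right\Vert _{\infty}\lesssim\overline{M}_{\varepsilon}(1+Q_{\ast}^{2}(s))$, together with the uniform bound $\left\Vert \rho^{\varepsilon}(s,\cdot)\right\Vert _{2}\lesssim\sqrt{\overline{M}_{\varepsilon}}$ of \Cref{Estimate on L2 norm of density }, and then optimizing the free parameter by the choice $l=l(s):=\min\{\tfrac12,(1+\left\Vert \rho^{\varepsilon}(s,\cdot)\right\Vert _{\infty})^{-1}\}$. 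This makes the term of \Cref{ESTIMATE ON FORCE FIELD } linear in $l$ of size $O(1/\varepsilon)$, while turning $\sqrt{|\log l|}$ into $\sqrt{1+\log(1+\left\Vert \rho^{\varepsilon}(s,\cdot)\right\Vert _{\infty})}$; absorbing $\log\overline{M}_{\varepsilon}\lesssim_{\alpha}\log(1/\varepsilon)\leq\log\mathbf{M}_{\varepsilon}$ into the logarithm, one arrives at
\[
\left\Vert \nabla\Phi^{\varepsilon}(s,\cdot)\right\Vert _{L^{\infty}(\mathbb{T}^{2})}\lesssim\mathbf{M}_{\varepsilon}\sqrt{1+\log\bigl(e+Q_{\ast}^{2}(s)\bigr)} .
\]

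Combining this with $Q_{\ast}(t)\leq\int_{0}^{t}\left\Vert \nabla\Phi^{\varepsilon}(s,\cdot)\right\Vert _{\infty}\,ds$ and passing to the nondecreasing majorant $\widetilde{Q}(t):=\sup_{0\leq s\leq t}Q_{\ast}(s)$ gives the scalar relation
\[
\widetilde{Q}(t)\leq C\mathbf{M}_{\varepsilon}\int_{0}^{t}\sqrt{1+\log\bigl(e+\widetilde{Q}^{2}(s)\bigr)}\,ds\leq C\mathbf{M}_{\varepsilon}\,t\,\sqrt{1+\log\bigl(e+\widetilde{Q}^{2}(t)\bigr)} .
\]
Since the right-hand side grows only like $\widetilde{Q}\sqrt{\log\widetilde{Q}}$, this is of Osgood type, so there is no finite-time blow-up; resolving it, e.g.\ by a direct bootstrap on the scalar inequality $y\leq a\sqrt{1+\log(e+y^{2})}$ with $a=C\mathbf{M}_{\varepsilon}t$, gives
\[
Q_{\ast}(t)\leq\widetilde{Q}(t)\lesssim\mathbf{M}_{\varepsilon}\,t\,\sqrt{1+\log(1+\mathbf{M}_{\varepsilon}t)},\qquad Q_{\ast}^{2}(t)\lesssim\mathbf{M}_{\varepsilon}^{2}t^{2}\bigl(1+\log(1+\mathbf{M}_{\varepsilon}t)\bigr) .
\]
Inserting this into \eqref{density est} and \eqref{partial velocity est}, and using $\overline{M}_{\varepsilon}\leq\varepsilon^{2}\mathbf{M}_{\varepsilon}^{2}$ (immediate from $\mathbf{M}_{\varepsilon}\geq\overline{M}_{\varepsilon}^{1/2}/\varepsilon$) together with $\overline{M}_{\varepsilon}\lesssim\mathbf{M}_{\varepsilon}$ and $\varepsilon\leq1$, one obtains
\[
\left\Vert \rho^{\varepsilon}(t,\cdot)\right\Vert _{\infty}\lesssim\overline{M}_{\varepsilon}+\overline{M}_{\varepsilon}Q_{\ast}^{2}(t)\lesssim\mathbf{M}_{\varepsilon}+\mathbf{M}_{\varepsilon}^{3}t^{2}\bigl(1+\log(1+\mathbf{M}_{\varepsilon}t)\bigr)=\Gamma_{\varepsilon}(t),
\]
and, in exactly the same way, $\left\Vert m_{2}^{\varepsilon}(t,\cdot)\right\Vert _{\infty}\lesssim\overline{M}_{\varepsilon}(1+Q_{\ast}^{4}(t))\lesssim\Gamma_{\varepsilon}^{2}(t)$.

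The heart of the argument, and the step I expect to require the most care, is the nonlinear feedback loop $Q_{\ast}\rightsquigarrow\left\Vert \rho^{\varepsilon}\right\Vert _{\infty}\rightsquigarrow\left\Vert \nabla\Phi^{\varepsilon}\right\Vert _{\infty}\rightsquigarrow Q_{\ast}$: one must ensure that each passage loses only a sublogarithmic factor, which is precisely what the optimization of $l$ in \Cref{ESTIMATE ON FORCE FIELD } provides, trading the logarithmic divergence of the truncated Riesz kernel for the harmless $\sqrt{\log(1+\left\Vert \rho^{\varepsilon}\right\Vert _{\infty})}$. A secondary, essentially bookkeeping, issue is to check that all the $\varepsilon$-dependence accumulated along the way (through $\overline{M}_{\varepsilon}$ and $\log\overline{M}_{\varepsilon}$) collapses precisely into the asserted quantity $\Gamma_{\varepsilon}$, which is what pins down the exponent $3$ in $\mathbf{M}_{\varepsilon}^{3}$ and the square in $\Gamma_{\varepsilon}^{2}$. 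Finally, global-in-time existence of $f^{\varepsilon}$ being already available by \Cref{thm:(Existence-for-Vlasov--Poisson)}, the quantity $Q_{\ast}(t)$ is a priori finite for every $t$, so a standard continuation argument legitimizes running the estimate over all of $[0,\infty)$.
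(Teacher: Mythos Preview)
Your proposal is correct and follows essentially the same route as the paper: derive $Q_{\ast}(t)\le\int_0^t\|\nabla\Phi^{\varepsilon}(s,\cdot)\|_{\infty}\,ds$ from the characteristics, plug \Cref{bound on density } and \Cref{Estimate on L2 norm of density } into \Cref{ESTIMATE ON FORCE FIELD } with $l\sim(1+\|\rho^{\varepsilon}\|_\infty)^{-1}$ to obtain $\|\nabla\Phi^{\varepsilon}\|_{\infty}\lesssim\mathbf{M}_{\varepsilon}\sqrt{1+\log(\cdots)}$, resolve the resulting Osgood-type inequality (the paper outsources this step to an external lemma, you sketch the bootstrap directly), and finish by inserting $Q_{\ast}(t)\lesssim\mathbf{M}_{\varepsilon}t\sqrt{1+\log(1+\mathbf{M}_{\varepsilon}t)}$ back into \eqref{density est}--\eqref{partial velocity est}. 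Your bookkeeping with $\overline{M}_{\varepsilon}$ versus $\mathbf{M}_{\varepsilon}$ is slightly more explicit than the paper's, but the argument is the same.
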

\begin{proof}
We can write the solution $f^{\varepsilon}(t,x,\xi)$ in a Lagrangian
manner, namely 

\[
f^{\varepsilon}(t,x,\xi)=f_{0}^{\varepsilon}(X_{\left(t,x,\xi\right)}(0),\Xi_{\left(t,x,\xi\right)}(0))
\]
for a.e. $(x,\xi)\in\mathbb{T}^{2}\times\mathbb{R}^{2}$ where $(X_{\left(t,x,\xi\right)},\Xi_{\left(t,x,\xi\right)})$
are given by \eqref{eq:-3-1-1-1}. By \Cref{ESTIMATE ON FORCE FIELD }
applied with $ l=\frac{1}{1+\left\Vert \rho^{\varepsilon}(t,\cdot)\right\Vert _{\infty}}$, \Cref{Estimate on L2 norm of density }, and \Cref{bound on density } we have 
\begin{equation}
\left\Vert \nabla\Phi_{\varepsilon}(t,\cdot)\right\Vert _{L^{\infty}(\mathbb{T}^{2})}\lesssim\frac{1}{\varepsilon}+\frac{1+(\overline{M_{\varepsilon}})^{\frac{1}{2}}}{\varepsilon}\sqrt{\left|\log\left(\overline{M}_{\varepsilon}(1+Q_{\ast}^{2}(t))\right)\right|}.\label{eq:-3}
\end{equation}
Setting $\mathbf{M}_{\varepsilon}\coloneqq\frac{1+(\overline{M_{\varepsilon}})^{\frac{1}{2}}}{\varepsilon}$  we see that 
\[
\left|\Xi_{\left(s,x,\xi\right)}(0)-\xi\right|\leq\left|\int_{0}^{s}\nabla\Phi_{\left(t,x,\xi\right)}^{\varepsilon}(\tau,X(\tau))d\tau\right|
\]
so that in view of \eqref{eq:-3} we obtain 
\begin{align*}
\underset{s\in[0,t]}{\sup}Q_{\ast}(s)\leq\underset{s\in[0,t]}{\sup}\int_{0}^{s}\mathbf{M}_{\varepsilon}\left(1+\sqrt{\log(\mathbf{M}_{\varepsilon}(1+Q_{\ast}^{2}(\tau)))}\right)d\tau
\\\leq t\mathbf{M}_{\varepsilon}\left(1+\sqrt{\log\left(\mathbf{M}_{\varepsilon}\left(1+\underset{s\in[0,t]}{\sup}Q_{\ast}^{2}(s)\right)\right)}\right).    
\end{align*}
Solving the above inequality (see Lemma A.1 in \cite{Griffin-Pickering Iacobelli 2023} for
more details) yields 

\[
\underset{s\in[0,t]}{\sup}Q_{\ast}(s)\lesssim \mathbf{M}_{\varepsilon}t\sqrt{1+\log\left(1+\mathbf{M}_{\varepsilon}t\right)}.
\]
Thanks to \Cref{bound on density }

\[
\left\Vert \rho^{\varepsilon}(t,\cdot)\right\Vert _{\infty}\lesssim \overline{M}_{\varepsilon}(1+Q^{2}_{\ast}(t))\lesssim \mathbf{M}_{\varepsilon}+\mathbf{M}_{\varepsilon}^{3}t^{2}\left(1+\log\left(1+\mathbf{M}_{\varepsilon}t\right)\right)\coloneqq\Gamma_{\varepsilon}(t)
\]
and 
\begin{align*}
\left\Vert m_{2}^{\varepsilon}(t,\cdot)\right\Vert_{\infty}\lesssim \overline{M}_{\varepsilon}(1+Q^{4}_{\ast}(t))\lesssim \Gamma_{\varepsilon}^{2}(t).   \end{align*}
\end{proof}

\subsection{The Gr\"onwall estimate for the modulated energy}

We consider the modulated energy, which we recall is given by 
\begin{equation}
E_{\varepsilon}(t)\coloneqq\frac{1}{2}\int\left|\xi-u(t,x)\right|^{2}f^{\varepsilon}(t,x,\xi)dxd\xi+\frac{\varepsilon}{2}\int\left|\nabla\Phi^{\varepsilon}\right|^{2}(t,x)dx.\label{modulated energy}
\end{equation}
Here $f^{\varepsilon}(t,x,\xi)$ is a solution of the Vlasov--Poisson
system 

\begin{equation}
\left\{ \begin{array}{c}
\partial_{t}f^{\varepsilon}+\xi\cdot\nabla_{x}f^{\varepsilon}-\nabla\Phi^{\varepsilon}\cdot\nabla_{\xi}f^{\varepsilon}=0,\ f^{\varepsilon}(0,x,\xi)=f_{0}^{\varepsilon}\\
\rho^{\varepsilon}-1=-\varepsilon\Delta\Phi^{\varepsilon},\ \rho^{\varepsilon}(t,x)=\int_{\mathbb{R}^{2}}f^{\varepsilon}(t,x,\xi)d\xi,
\end{array}\right.\label{VP section 4}
\end{equation}
and $(\omega,u)$ is a solution to the 2D incompressible Euler equation 
\begin{equation}
\partial_{t}\omega+\mathrm{div}(\omega u)=0,\ \omega=\Delta\psi,\ u=-\nabla^{\bot}\psi.\label{Euler section 4}
\end{equation}
We introduce the notation 

\[
d(u)\coloneqq\frac{1}{2}\left(\nabla u+\nabla^{T}u\right)
\]
and 

\[
A(u)\coloneqq\partial_{t}u+u\cdot\nabla u.
\]
The following calculation can be found in Appendix 7 of \cite{Brenier}. 
\begin{thm}
Let assumptions \textbf{H1-H4} and let the assumption of \Cref{Yudovich theorem }
hold. Then, it holds that 
\begin{align*}
\dot{E_{\varepsilon}}(t)=-&\int_{\mathbb{T}^{2}}\int_{\mathbb{R}^{2}}d(u)(t,x):(\xi-u(t,x))\otimes (\xi-u(t,x))f^{\varepsilon}(t,x,\xi)dxd\xi \\
& +\varepsilon\int_{\mathbb{T}^{2}}d(u)(t,x):\nabla\Phi^{\varepsilon} \otimes \nabla\Phi^{\varepsilon}dx
+\int_{\mathbb{T}^{2}}A(u)(t,x)\cdot u(t,x)\left(\rho^{\varepsilon}(t,x)-1\right)dx\coloneqq\stackrel[k=1]{3}{\sum}I_{k}.
\end{align*}
\end{thm}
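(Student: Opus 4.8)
The identity is Brenier's modulated-energy computation (Appendix 7 of \cite{Brenier}), so the plan is to reproduce that computation while checking that the regularity available in the Yudovich class makes each step legitimate. The relevant bounds are: $u\in L^{\infty}([0,T];L^{\infty})$ (\Cref{Estimate in terms of initial data }); $\nabla u\in L^{\infty}([0,T];L^{p})$ for every finite $p$ (Calderon--Zygmund applied to the bounded vorticity of \Cref{Yudovich theorem }) together with $\nabla u\in L^{\infty}([0,T];\mathrm{BMO})$ (\Cref{Endpoint CZ Theorem }); and, for $f^{\varepsilon}$, the velocity moments of order $k_{0}>4$ furnished by $\mathbf{H1}$--$\mathbf{H2}$, which give $\int_{\mathbb{R}^{2}}\xi\otimes\xi\,f^{\varepsilon}\,d\xi\in L^{\infty}_{x}$ by \Cref{bound on density } and $\nabla\Phi^{\varepsilon}\in L^{\infty}_{x}$ by \Cref{ESTIMATE ON FORCE FIELD } and \Cref{propagation of the density }.

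I would first reexpress the modulated energy through the physical energy of \Cref{Conservation of energy }: expanding the square,
\[
E_{\varepsilon}(t)=\mathcal{E}_{\varepsilon}^{0}(t)-\int_{\mathbb{T}^{2}}u\cdot J^{\varepsilon}\,dx+\frac{1}{2}\int_{\mathbb{T}^{2}}|u|^{2}\rho^{\varepsilon}\,dx,
\]
so that, $\mathcal{E}_{\varepsilon}^{0}$ being conserved, only the two cross terms need to be differentiated. For that I would use the local conservation laws coming from the zeroth and first velocity moments of the Vlasov equation, $\partial_{t}\rho^{\varepsilon}+\mathrm{div}\,J^{\varepsilon}=0$ and $\partial_{t}J^{\varepsilon}+\mathrm{div}\!\left(\int_{\mathbb{R}^{2}}\xi\otimes\xi\,f^{\varepsilon}\,d\xi\right)=-\rho^{\varepsilon}\nabla\Phi^{\varepsilon}$, integrate by parts in $x$, and insert the splitting $\int\xi\otimes\xi\,f^{\varepsilon}\,d\xi=\int(\xi-u)\otimes(\xi-u)f^{\varepsilon}\,d\xi+u\otimes G+G\otimes u+\rho^{\varepsilon}\,u\otimes u$ with $G\coloneqq\int(\xi-u)f^{\varepsilon}\,d\xi=J^{\varepsilon}-\rho^{\varepsilon}u$. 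The $(\xi-u)\otimes(\xi-u)$ block produces $I_{1}$ (only the symmetric part $d(u)$ of $\nabla u$ survives the contraction against a symmetric tensor), while the mixed blocks and $\rho^{\varepsilon}u\otimes u$ are either exact $x$-gradients, annihilated by $\mathrm{div}\,u=0$, or cancel the term coming from $\tfrac12\int|u|^{2}\partial_{t}\rho^{\varepsilon}$. The Poisson force term $\int\rho^{\varepsilon}\nabla\Phi^{\varepsilon}\cdot u$, rewritten via $\rho^{\varepsilon}-1=-\varepsilon\Delta\Phi^{\varepsilon}$ and the Maxwell-stress identity $\varepsilon(\Delta\Phi^{\varepsilon})\nabla\Phi^{\varepsilon}=\varepsilon\,\mathrm{div}(\nabla\Phi^{\varepsilon}\otimes\nabla\Phi^{\varepsilon})-\tfrac{\varepsilon}{2}\nabla|\nabla\Phi^{\varepsilon}|^{2}$, together with $\int u\cdot\nabla\Phi^{\varepsilon}=-\int(\mathrm{div}\,u)\Phi^{\varepsilon}=0$ and $\mathrm{div}\,u=0$, collapses to $I_{2}=\varepsilon\int d(u):\nabla\Phi^{\varepsilon}\otimes\nabla\Phi^{\varepsilon}$. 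The contributions still carrying $\partial_{t}u$ and $u\cdot\nabla u$ assemble into $-\int A(u)\cdot G$, and using the Euler equation in velocity form $A(u)=-\nabla p$ with $\mathrm{div}\,u=0$ and one final integration by parts this is brought to the form $I_{3}=\int A(u)\cdot u(\rho^{\varepsilon}-1)$.

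Each of $I_{1},I_{2},I_{3}$ is finite by the bounds above (for instance $I_{1}$, because $\nabla u\in L^{1}_{x}$ and $\int_{\mathbb{R}^{2}}|\xi-u|^{2}f^{\varepsilon}\,d\xi\in L^{\infty}_{x}$), so the only genuine difficulty is to justify the differentiation in $t$, the integrations by parts, and the moment form of the conservation laws when $\nabla u$ is not Lipschitz and $f^{\varepsilon}$ has only finitely many moments. I would settle this by the usual approximation: mollify the initial vorticity, $\omega_{0}^{n}\to\omega_{0}$; the identity holds classically for the smooth velocity fields $u^{n}$ paired with the fixed solution $f^{\varepsilon}$, and one passes to the limit $n\to\infty$ using the conservation of $L^{p}$ norms and the stability of Yudovich solutions (\Cref{Yudovich theorem }) and the uniform-in-$n$ kinetic bounds of \Cref{bound on density } and \Cref{propagation of the density }. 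This limiting passage is the main — though essentially routine — obstacle; granting it, the statement is precisely Brenier's computation.
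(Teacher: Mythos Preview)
Your approach is exactly the paper's: the paper gives no proof of its own but refers to Appendix~7 of \cite{Brenier}, and you propose to reproduce that computation while checking the regularity justifications in the Yudovich framework. Your outline of the computation is correct up to and including the identification of $I_{1}$, $I_{2}$, and the acceleration contribution $-\int A(u)\cdot G$ with $G=J^{\varepsilon}-\rho^{\varepsilon}u$; the regularity bookkeeping and the mollification argument you sketch are also the right way to make the formal manipulations rigorous.

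There is, however, a genuine gap in your last step. Writing $-\int A(u)\cdot G = -\int A(u)\cdot(J^{\varepsilon}-u) + \int A(u)\cdot u\,(\rho^{\varepsilon}-1)$ and then using $A(u)=-\nabla p$, $\mathrm{div}\,u=0$ and one integration by parts, the first piece becomes $-\int p\,\mathrm{div}J^{\varepsilon}=\int p\,\partial_{t}\rho^{\varepsilon}$, which has no reason to vanish. So the passage from $-\int A(u)\cdot G$ to the exact form $I_{3}=\int A(u)\cdot u\,(\rho^{\varepsilon}-1)$ stated in the theorem is not achieved by the manipulation you describe. Either an additional cancellation must be exhibited, or the acceleration term should be left as $-\int A(u)\cdot G$ (which is the form one usually finds in Brenier's computation and which is equally amenable to the $H^{-1}$ estimate used later, since $A(u)=-\nabla p$ with $p\in W^{1,q}$ for all $q<\infty$). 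You should revisit this point and reconcile your output with the precise expression for $I_{3}$.
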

In order to conclude the Gr\"onwall estimate, we now aim to estimate
each of the summands $I_{k}$ by means of the modulated energy $E_{\varepsilon}(t)$.
The main theorem of this section, from which our main result \Cref{main thm} follows directly, is 
\begin{thm}
Let assumptions \textbf{H1-H4} hold. Let $f^{\varepsilon}(t,x,\xi)$ and $(\omega(t,x),u(t,x))$
be the solutions to \eqref{VP section 4} and \eqref{Euler section 4}
respectively (ensured by \Cref{thm:(Existence-for-Vlasov--Poisson)}
and \Cref{Yudovich theorem } respectively). Let $E_{\varepsilon}(t)$
be given by formula \eqref{modulated energy}. Then, there is some
$T_{\ast}=T_{\ast}(\left\Vert \omega_{0}\right\Vert _{L^{\infty}(\mathbb{T}^2)})$
such that $\underset{t\in[0,T_{\ast}]}{\sup}E_{\varepsilon}(t)\underset{\varepsilon\rightarrow0}{\rightarrow}0$. 
\end{thm}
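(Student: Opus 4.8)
The plan is to run a Gr\"onwall argument on $E_\varepsilon(t)$ starting from the identity $\dot E_\varepsilon=I_1+I_2+I_3$ of the previous theorem. The target is the differential inequality
\[
\dot E_\varepsilon(t)\le C\bigl(\varepsilon+(1+\log(1/\varepsilon))\,E_\varepsilon(t)\bigr),\qquad t\in[0,1],\ \ C=C(\|\omega_0\|_{L^\infty},\alpha),
\]
i.e.\ each $I_k$ controlled by $E_\varepsilon(t)$ up to a loss which is only \emph{logarithmic} in $\varepsilon$ (plus a harmless $O(\varepsilon)$ source); since $\mathbf{H3}$ makes $E_\varepsilon(0)$ polynomially small, such a loss is tolerable on a short interval. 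For the ``kinetic'' and ``field'' terms $I_1$ and $I_2$ the tools are the Hardy--$\mathrm{BMO}$ duality \Cref{Duality inequality on torus} and the Wiener/$L\log L$ estimate \Cref{Orlicz space type estimate -1-1}, together with $\|d(u)\|_{\mathrm{BMO}(\mathbb{T}^2)}\le\|\nabla u\|_{\mathrm{BMO}(\mathbb{T}^2)}\lesssim\|\omega_0\|_{L^\infty}$ from \Cref{Endpoint CZ Theorem }; for $I_3$ I would integrate by parts and use the regularity of Yudovich solutions.

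For $I_1$, write $I_1=-\sum_{i,j}\int_{\mathbb{T}^2}d(u)_{ij}(t,x)\,F_{ij}(t,x)\,dx$ with $F_{ij}(t,x):=\int_{\mathbb{R}^2}(\xi_i-u_i)(\xi_j-u_j)f^\varepsilon\,d\xi$, and observe the pointwise bound $|F_{ij}(t,x)|\le\zeta_\varepsilon(t,x):=\int_{\mathbb{R}^2}|\xi-u(t,x)|^2 f^\varepsilon(t,x,\xi)\,d\xi$. By \Cref{Duality inequality on torus} and monotonicity of the maximal function, $|I_1|\lesssim_{\|\omega_0\|_{L^\infty}}\|\mathcal{M}\zeta_\varepsilon(t,\cdot)\|_{L^1(\mathbb{T}^2)}$. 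Applying \Cref{Orlicz space type estimate -1-1} with $\eta=\varepsilon$, using $\|\zeta_\varepsilon(t,\cdot)\|_{L^1}=\int|\xi-u|^2 f^\varepsilon\,dxd\xi\le 2E_\varepsilon(t)$ and, from \Cref{propagation of the density } and \Cref{Estimate in terms of initial data }, $\|\zeta_\varepsilon(t,\cdot)\|_{L^\infty}\lesssim\|m_2^\varepsilon(t,\cdot)\|_{L^\infty}+\|u(t,\cdot)\|_{L^\infty}^2\|\rho^\varepsilon(t,\cdot)\|_{L^\infty}\lesssim_{\|\omega_0\|_{L^\infty}}\Gamma_\varepsilon^2(t)$, yields
\[
|I_1|\lesssim_{\|\omega_0\|_{L^\infty}}\varepsilon+\bigl(1+\log\Gamma_\varepsilon(t)+\log(1/\varepsilon)\bigr)\,E_\varepsilon(t).
\]
Finally $\mathbf{H2}$ gives $\overline{M}_\varepsilon=O(\varepsilon^{-\alpha})$, hence $\mathbf{M}_\varepsilon=O(\varepsilon^{-1-\alpha/2})$ and $\Gamma_\varepsilon(t)$ is bounded, uniformly for $t\in[0,1]$, by a fixed power of $1/\varepsilon$ times logarithms, so $\log\Gamma_\varepsilon(t)\lesssim_\alpha\log(1/\varepsilon)$ and $|I_1|\lesssim_{\|\omega_0\|_{L^\infty},\alpha}\varepsilon+(1+\log(1/\varepsilon))E_\varepsilon(t)$.

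The term $I_2=\varepsilon\int_{\mathbb{T}^2}d(u):\nabla\Phi^\varepsilon\otimes\nabla\Phi^\varepsilon\,dx$ I would treat identically with $\zeta_\varepsilon$ replaced by $|\nabla\Phi^\varepsilon|^2$ and the factor $\varepsilon$ kept next to $\|\nabla\Phi^\varepsilon(t,\cdot)\|_{L^2}^2$ (so $\varepsilon\|\nabla\Phi^\varepsilon(t,\cdot)\|_{L^2}^2\le 2E_\varepsilon(t)$): since $\|\nabla\Phi^\varepsilon(t,\cdot)\|_{L^\infty}$ is again a power of $1/\varepsilon$ (times logarithms) for $t\in[0,1]$ by \Cref{ESTIMATE ON FORCE FIELD } and \Cref{propagation of the density }, the same two inputs give $|I_2|\lesssim_{\|\omega_0\|_{L^\infty},\alpha}\varepsilon^2+(1+\log(1/\varepsilon))E_\varepsilon(t)$. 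For $I_3=\int_{\mathbb{T}^2}A(u)\cdot u\,(\rho^\varepsilon-1)\,dx$, use $\rho^\varepsilon-1=-\varepsilon\Delta\Phi^\varepsilon$ and integrate by parts, $I_3=\varepsilon\int_{\mathbb{T}^2}\nabla(A(u)\cdot u)\cdot\nabla\Phi^\varepsilon\,dx$; since $A(u)=-\nabla p$ with $-\Delta p=\mathrm{tr}((\nabla u)^2)$ and, for a Yudovich solution, $\nabla u\in L^q(\mathbb{T}^2)$ for every $q<\infty$ with norm controlled by $\|\omega_0\|_{L^\infty}$ (using $\|\omega(t,\cdot)\|_{L^\infty}=\|\omega_0\|_{L^\infty}$, \Cref{Yudovich theorem }), elliptic regularity and Sobolev embedding give $\nabla p\in L^\infty$ and $\nabla^2 p\in L^q$ for all $q<\infty$, so $\nabla(A(u)\cdot u)\in L^2(\mathbb{T}^2)$ with norm $\lesssim_{\|\omega_0\|_{L^\infty}}1$; Cauchy--Schwarz then gives $|I_3|\le\varepsilon\|\nabla(A(u)\cdot u)\|_{L^2}\|\nabla\Phi^\varepsilon(t,\cdot)\|_{L^2}\lesssim_{\|\omega_0\|_{L^\infty}}\sqrt{\varepsilon}\sqrt{E_\varepsilon(t)}\le\varepsilon+E_\varepsilon(t)$.

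Summing the three estimates produces the target differential inequality on $[0,1]$; Gr\"onwall's lemma together with $\mathbf{H3}$ then gives $E_\varepsilon(t)\le e^{C(1+\log(1/\varepsilon))t}(E_\varepsilon(0)+C\varepsilon t)\le C'e^{Ct}\,\varepsilon^{\min(\beta,1)-Ct}$ for $t\in[0,1]$, which tends to $0$ as $\varepsilon\to 0$, uniformly on $[0,T_\ast]$ as soon as $T_\ast<\min(\beta,1)/C$. Hence I would take $T_\ast:=\min\{1,\tfrac12\min(\beta,1)/C\}$, depending only on $\|\omega_0\|_{L^\infty}$, $\alpha$ and $\beta$. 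I expect $I_1$ to be the main obstacle: one must use \emph{at the same time} the endpoint bound $\|d(u)\|_{\mathrm{BMO}}\lesssim\|\omega_0\|_{L^\infty}$, the $L\log L$ control of $\mathcal{M}\zeta_\varepsilon$, and the explicit polynomial-in-$\varepsilon$ bounds on $\|\rho^\varepsilon\|_{L^\infty}$ and $\|m_2^\varepsilon\|_{L^\infty}$ from \Cref{propagation of the density }, so that the unavoidable loss in the Gr\"onwall estimate is only a $\log(1/\varepsilon)$ factor --- exactly what the polynomial smallness of $E_\varepsilon(0)$ absorbs on a short interval. (Covering arbitrary times would instead require an at most logarithmic-in-$\varepsilon$ bound on $\|\rho^\varepsilon(t,\cdot)\|_{L^\infty}$, which is open.)
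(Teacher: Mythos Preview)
Your proposal is correct and follows essentially the same route as the paper: for $I_1$ and $I_2$ you apply the $\mathrm{BMO}$--Hardy duality (\Cref{Duality inequality on torus}) together with the $L\log L$/Wiener bound (\Cref{Orlicz space type estimate -1-1}) and feed in the polynomial-in-$\varepsilon$ bounds on $\|\rho^\varepsilon\|_\infty$, $\|m_2^\varepsilon\|_\infty$, $\|\nabla\Phi^\varepsilon\|_\infty$ from \Cref{propagation of the density } and \Cref{ESTIMATE ON FORCE FIELD }, while $I_3$ is handled by integration by parts against $\nabla\Phi^\varepsilon$ --- exactly the paper's $H^{-1}$ argument. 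The only cosmetic differences are that you fix $\eta=\varepsilon$ from the outset (the paper keeps $\eta\sim\varepsilon^\delta$ free until the end) and that you spell out the regularity of $A(u)\cdot u$ via the pressure equation, which the paper invokes implicitly through $\sup_t\|\nabla(A(u)\cdot u)\|_{L^2}\lesssim_{\|\omega_0\|_\infty}1$.
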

\textit{Proof}. 
\textbf{Step 1. Estimate on $I_{1}$.} Let $F^{\varepsilon}:\mathbb{T}^{2}\rightarrow\mathbb{R}$
be defined as follows 
\[
F^{\varepsilon}(t,x)\coloneqq\int_{\mathbb{R}^{2}}(\xi-u(t,x))^{i}(\xi-u(t,x))^{j}f^{\varepsilon}(t,x,\xi)d\xi.
\]
First, using Einstein's summation 
\begin{align*}
\left|I_{1}\right|=&\left|\int_{\mathbb{T}^{2}\times\mathbb{R}^{2}}\partial_{x_{i}}u^{j}(t,\cdot)(\xi-u(t,\cdot))^{i}(\xi-u(t,\cdot))^{j}f^{\varepsilon}(t,x,\xi)dxd\xi\right|\\
&\leq\underset{t\in[0,T]}{\sup}\left\Vert \nabla u(t,\cdot)\right\Vert _{\mathrm{BMO(\mathbb{T}^{2})}}\left\Vert \mathcal{M}F^{\varepsilon}\right\Vert _{L^{1}(\mathbb{T}^{2})}\lesssim_{\left\Vert \omega_{0}\right\Vert _{L^{\infty}(\mathbb{T}^{2})}}\left\Vert \mathcal{M}F^{\varepsilon}\right\Vert _{L^{1}(\mathbb{T}^{2})},
\end{align*}
where we obtain the first inequality thanks to \Cref{Duality inequality on torus}, and the
second inequality is by \Cref{Endpoint CZ Theorem }.
Now, by \Cref{Orlicz space type estimate -1-1} we get:

\begin{align*}
\left\Vert \mathcal{M}F^{\varepsilon}(t,\cdot)\right\Vert _{L^{1}(\mathbb{T}^{2})}\leq\eta+2\int_{\mathbb{T}^{2}}\left|F^{\varepsilon}(t,x)\right|\log_{+}\left|F^{\varepsilon}(t,x)\right|dx+2\log\left(\frac{1}{\eta}\right)\left\Vert F^{\varepsilon}(t,\cdot)\right\Vert _{L^{1}(\mathbb{T}^{2})}.\label{eq:-9}    
\end{align*}

 In order to bound $\int_{\mathbb{T}^{2}}\left|F^{\varepsilon}(t,x)\right|\log_{+}\left|F^{\varepsilon}(t,x)\right|dx$
by means of $\left\Vert F^{\varepsilon}(t,\cdot)\right\Vert _{L^1(\mathbb{T}^{2})}$,
we need an estimate on $\left\Vert F^{\varepsilon}(t,\cdot)\right\Vert _{L^\infty(\mathbb{T}^{2})}$.
By \Cref{Estimate in terms of initial data } we have

\begin{align*}
 \left|F^{\varepsilon}(t,x)\right|\leq\int_{\mathbb{R}^{2}}\left|\xi\right|^{2}f^{\varepsilon}(t,x,\xi)d\xi+\left\Vert \omega_{0}\right\Vert _{ L^{\infty}(\mathbb{T}^{2})}\int_{\mathbb{R}^{2}}f^{\varepsilon}(t,x,\xi)d\xi   
\end{align*}
\begin{equation}
\hspace{-1.5 cm}\lesssim_{\left\Vert \omega_{0}\right\Vert _{L^{\infty}(\mathbb{T}^{2})}} \left\Vert m_{2}^{\varepsilon}(t,\cdot) \right\Vert_{\infty}+\left\Vert \rho_{f}^{\varepsilon}(t,\cdot)\right\Vert _{L^\infty(\mathbb{T}^{2})}.\label{eq:-10}
\end{equation}
Hence, \Cref{propagation of the density } entails 
\begin{align*}
\left\Vert \mathcal{M}F^{\varepsilon}(t,\cdot)\right\Vert _{L^{1}(\mathbb{T}^{2})}\lesssim&_{\left\Vert \omega_{0}\right\Vert _{L^{\infty}(\mathbb{T}^{2})}}\eta+\log\left(\left\Vert m_{2}^{\varepsilon}(t,\cdot) \right\Vert_{\infty}+\left\Vert \rho_{f}^{\varepsilon}(t,\cdot)\right\Vert _{L^\infty(\mathbb{T}^{2})}\right)\left\Vert F^{\varepsilon}(t,\cdot)\right\Vert _{L^1(\mathbb{T}^{2})}\\&+\log\left(\frac{1}{\eta}\right)\left\Vert F^{\varepsilon}(t,\cdot)\right\Vert _{L^1(\mathbb{T}^{2})}\\
\lesssim&_{\left\Vert \omega_{0}\right\Vert _{L^{\infty}(\mathbb{T}^{2})}}\eta+\log\left(\Gamma_{\varepsilon}^{2}(t)+\Gamma_{\varepsilon}(t)\right)E_{\varepsilon}(t)+\log\left(\frac{1}{\eta}\right)E_{\varepsilon}(t).
\end{align*}
To conclude

\begin{equation}
|I_{1}(t)|\lesssim_{\left\Vert \omega_{0}\right\Vert _{L^{\infty}(\mathbb{T}^{2})}}\left(\eta+\log\left(\Gamma^{2}_{\varepsilon}(t)+\Gamma_{\varepsilon}(t)\right)E_{\varepsilon}(t)+\log\left(\frac{1}{\eta}\right)E_{\varepsilon}(t)\right).\label{eq:-1}
\end{equation}

\textbf{Step 2}.\textbf{ Estimate on $I_{2}$.} The estimate for $I_{2}$
is obtained via considerations similar to the ones in
Step 1. We have 

\[
I_{2}(t)= \frac{1}{\varepsilon}\int_{\mathbb{T}^{2}}\mathrm{div}(u)\left|\nabla V\star(\rho_{f}^{\varepsilon}-1)(t,x)\right|^{2}dx.
\]
Thanks to \Cref{Endpoint CZ Theorem } and \Cref{Duality inequality on torus} we have:

\begin{align*}
\left|\int_{\mathbb{T}^{2}}\mathrm{div}(u)\left|\nabla V\star(\rho_{f}^{\varepsilon}-1)(t,x)\right|^{2}dx\right|
& \leq\left\Vert \mathrm{div}(u(t,\cdot))\right\Vert _{\mathrm{BMO(\mathbb{T}^{2})}}\left\Vert  \mathcal{M}\left|\nabla V\star(\rho_{f}^{\varepsilon}-1)\right|^{2}(t,\cdot)\right\Vert _{{L}^{1}(\mathbb{T}^{2})}\\&\lesssim_{\left\Vert \omega_{0}\right\Vert _{L^{\infty}(\mathbb{T}^{2})}}  \left\Vert \mathcal{M}\left|\nabla V\star(\rho_{f}^{\varepsilon}-1)\right|^{2}(t,\cdot)\right\Vert _{L^{1}(\mathbb{T}^{2})}.  
\end{align*}
Notice that, using almost the same analysis as in the proof of \Cref{ESTIMATE ON FORCE FIELD }, we obtain the bound 

\[
\left\Vert \left|\nabla V\star(\rho_{f}^{\varepsilon}-1)(t,x)\right|^{2}\right\Vert _{L^{\infty}(\mathbb{T}^{2})}\lesssim1+\left\Vert \rho_{f}^{\varepsilon}(t,\cdot)\right\Vert _{L^1(\mathbb{T}^{2})}^{2}+\left\Vert \rho_{f}^{\varepsilon}(t,\cdot)\right\Vert _{L^\infty(\mathbb{T}^{2})}^{2}\lesssim1+\Gamma_{\varepsilon}^{2}(t).
\]
Thanks to \Cref{Orlicz space type estimate -1-1} and proceeding as in Step 1 we find
\begin{equation}
|I_{2}(t)|\lesssim_{\left\Vert \omega_{0}\right\Vert _{L^{\infty}(\mathbb{T}^{2})}}\eta+\log\left(1+\Gamma_{\varepsilon}^{2}(t)\right)E_{\varepsilon}(t)+\log\left(\frac{1}{\eta}\right)E_{\varepsilon}(t).\label{eq:-7-1}
\end{equation}

\textbf{Step 3}. \textbf{Estimate on $I_{3}$. }Let $\varphi(t,x)= A(t,x)\cdot u(t,x)$. We have 

\begin{align*}
\left|\int_{\mathbb{T}^{2}}\varphi(t,x)\left(\rho^{\varepsilon}(t,x)-1\right)dx\right|&\leq\underset{t\in[0,T]}{\sup}\left\Vert \nabla\varphi(t,\cdot)\right\Vert _{2}\left\Vert \rho^{\varepsilon}(t,\cdot)-1\right\Vert _{H^{-1}(\mathbb{T}^{d})}\\
&\leq\varepsilon\underset{t\in[0,T]}
{\sup}\left\Vert \nabla\varphi(t,\cdot)\right\Vert _{2}^2 + \varepsilon \norme{\nabla \Phi^\varepsilon(t)}_{L^{2}(\mathbb{T}^{2})}^2\\
&\leq\varepsilon\underset{t\in[0,T]}{\sup}\left\Vert \nabla\varphi(t,\cdot)\right\Vert _{2}^2+  E_{\varepsilon}(t)\lesssim_{\left\Vert \omega_{0}\right\Vert _{L^{\infty}(\mathbb{T}^{2})}}\varepsilon+E_{\varepsilon}(t).    
\end{align*}


\textbf{Step 4}. \textbf{Conclusion}. Altogether, we obtain 
\[
\dot{E_{\varepsilon}}(t)\lesssim_{\left\Vert \omega_{0}\right\Vert _{L^{\infty}(\mathbb{T}^{2})}}\varepsilon+\eta+\left(\log\left(\Gamma^{2}_{\varepsilon}(t)+\Gamma_{\varepsilon}(t)\right)+\log\left(1+\Gamma_{\varepsilon}^{2}(t)\right)+\log\left(\frac{1}{\eta}\right)\right)E_{\varepsilon}(t).
\]
Putting $\overline{\Gamma}_{\varepsilon}\coloneqq \underset{t\in [0,T]}{\mathrm{sup}}{\Gamma}_{\varepsilon}(t)$, we get by  Gr\"onwall's inequality   

\[
E_{\varepsilon}(t)\leq\frac{\left(\overline{\Gamma}_{\varepsilon}^{2}+\overline{\Gamma}_{\varepsilon}\right)^{CT}\left(1+\overline{\Gamma}_{\varepsilon}^{2}\right)^{CT}}{\eta^{CT}}\left(\varepsilon+\eta+E_{\varepsilon}(0)\right)\leq\frac{1}{\varepsilon^{\gamma CT}\eta^{CT}}\left(\varepsilon+\eta+E_{\varepsilon}(0)\right),
\]
where $C=C(\left\Vert \omega_{0}\right\Vert _{L^{\infty}(\mathbb{T}^{2})})$ and $\gamma=\gamma(\alpha)>0$. Choosing $\eta\sim\varepsilon^{\delta}$ for some arbitrary $\delta>0$ and $T_{\ast}=T_{\ast}(\left\Vert \omega_{0}\right\Vert _{L^{\infty}(\mathbb{T}^{2})},\beta,\alpha,\delta)>0$
sufficiently small, and using \textbf{H4}, we finally get $\underset{t\in[0,T_{\ast}]}{\sup}E_{\varepsilon}(t)\underset{\varepsilon\rightarrow0}{\rightarrow}0$. 
\begin{flushright}
$\square$
\par\end{flushright}

\begin{rem}
The convergence $\underset{t\in[0,T]}{\sup}E_{\varepsilon}(t)\underset{\varepsilon\rightarrow0}{\rightarrow}0$ implies $\rho^{\varepsilon}(t,\cdot)\underset{\varepsilon\rightarrow0}{\rightharpoonup}1$ and $J^{\varepsilon}(t,\cdot)\underset{\varepsilon\rightarrow0}{\rightharpoonup}u(t,\cdot)$ weakly in the sense of measures on $[0,T]$.     
\end{rem}
\section{\label{Construction-of-initial data } Construction of well prepared
initial data}
Take
\[
f_{0}^{\varepsilon}(x,\xi)\coloneqq\frac{1}{\varepsilon^{\beta}}\exp\left(-\frac{\left|\xi-u_{0}(x)\right|^{2}}{2\varepsilon^{\beta}}\right).
\]
By variable change we see that 
\[
\int_{\mathbb{R}^{2}}f_{0}^{\varepsilon}(x,\xi)d\xi=1
\]
hence 

\[
\iint_{\mathbb{T}^{2}\times\mathbb{R}^{2}} f_{0}^{\varepsilon}(x,\xi)d\xi dx=1,
\]
so that \textbf{H1} holds. This also shows that for this choice the
interaction part vanishes identicaly 
\[
E_{\varepsilon}^{2}(0)=0.
\]
As for the kinetic part, we compute 

\[
E_{\varepsilon}^{1}(0)=\frac{1}{2}\iint_{\mathbb{T}^{2}\times\mathbb{R}^{2}}\left|\xi-u_{0}(x)\right|^{2}f_{0}^{\varepsilon}(x,\xi)dxd\xi=\frac{1}{\varepsilon^{\beta}}\iint_{\mathbb{T}^{2}\times\mathbb{R}^{2}}\left|\xi\right|^{2}\exp\left(-\frac{\left|\xi\right|^{2}}{2\varepsilon^{\beta}}\right)dxd\xi
\]

\begin{equation}
=\varepsilon^{\beta}\iint_{\mathbb{T}^{2}\times\mathbb{R}^{2}}\left|\xi\right|^{2}\exp\left(-\frac{\left|\xi\right|^{2}}{2}\right)d\xi dx=O(\varepsilon^{\beta}).\label{kinetic part-1-1}
\end{equation}
This shows that $E_{\varepsilon}(0)=O(\varepsilon^{\beta})$, which
is requirement \textbf{H2}. To verify \textbf{H3}, note that 

\[
\left\Vert f_{0}^{\varepsilon}\right\Vert _{L^{\infty}\cap L^{1}}\leq\frac{1}{\varepsilon^{\beta}}+1
\]
and 

\begin{equation}
\left\Vert \left|\xi\right|^{k_{0}}f_{0}^{\varepsilon}\right\Vert _{L^{\infty}\cap L^{1}}\leq\frac{1}{\varepsilon^{\beta}}+\frac{1}{\varepsilon^{\beta}}\iint_{\mathbb{T}^{2}\times\mathbb{R}^{2}}\left|\xi\right|^{k_{0}}\exp\left(-\frac{\left|\xi-u_{0}(x)\right|^{2}}{2\varepsilon^{\beta}}\right)d\xi dx.\label{eq:-5}
\end{equation}
After changing variables the integral on the right hand side of (\ref{eq:-5})
writes 
\begin{align*}
\frac{1}{\varepsilon^{\beta}}\iint_{\mathbb{T}^{2}\times\mathbb{R}^{2}}\left|\xi+u_{0}\right|^{k_{0}}\exp\left(-\frac{\left|\xi\right|^{2}}{2\varepsilon^{\beta}}\right)d\xi dx\lesssim_{k_{0}}&\frac{1}{\varepsilon^{\beta}}\iint_{\mathbb{T}^{2}\times\mathbb{R}^{2}}\left|\xi\right|^{k_{0}}\exp\left(-\frac{\left|\xi\right|^{2}}{2\varepsilon^{\beta}}\right)d\xi dx\\
&+\left\Vert u_{0}\right\Vert _{\infty}^{k}\frac{1}{\varepsilon^{\beta}}\iint_{\mathbb{T}^{2}\times\mathbb{R}^{2}}\exp\left(-\frac{\left|\xi\right|^{2}}{2\varepsilon^{\beta}}\right)d\xi dx\\&=\left\Vert u_{0}\right\Vert _{\infty}^{k_{0}}+\varepsilon^{\frac{\beta k_{0}}{2}}\iint_{\mathbb{T}^{2}\times\mathbb{R}^{2}}\left|\xi\right|^{k_{0}}\exp\left(-\frac{\left|\xi\right|^{2}}{2}\right)d\xi dx\\&\lesssim_{k_{0},\left\Vert u_{0}\right\Vert _{\infty}}1.
\end{align*}
So to conclude this shows that 
\[
\left\Vert (1+\left|\xi\right|^{k_{0}})f_{0}^{\varepsilon}\right\Vert _{L^{\infty}\cap L^{1}}\lesssim_{k_{0},\left\Vert u_{0}\right\Vert _{\infty}}\varepsilon^{-\beta}.
\]

\appendix

\section{Well-posedness of the Vlasov--Poisson system on the two-dimensional torus}\label{sec:appen}
For completeness, we recall the well-posedness theory of the Vlasov--Poisson system on $\mathbb{T}^{2}\times\mathbb{R}^{2}$. Here the Debye length doesn't play any role, thus we set $\varepsilon=1$. 
We follow the presentation of the well-posedness theory for \eqref{eq:Vlasov Equation} in \cite{GI21}.
\begin{thm}
\textup{\label{thm:(Existence-for-Vlasov--Poisson)} (Well-posedness for
Vlasov--Poisson on the $2$ dimensional torus)} Let $f_{0}\in L^{\infty}\cap L^{1}(\mathbb{T}^{2}\times\mathbb{R}^{2})$
be a probability density. Assume that 
\begin{enumerate}
    \item There are some $k_{0}>2$ and $\kappa_{0}>0$ such
that for a.e. $(x,\xi)\in\mathbb{T}^{2}\times\mathbb{R}^{2}$ it holds
that 

\[
f_{0}(x,\xi)\leq\frac{\kappa_{0}}{1+\left|\xi\right|^{k_{0}}}.
\]
    \item  There are some $m_{0}>2$ and $\mu_{0}>0$ such that
\[
\iint_{\mathbb{T}^{2}\times\mathbb{R}^{2}}\left|\xi\right|^{m_{0}}f_{0}(x,\xi)dxd\xi\leq\mu_{0}.
\]
\end{enumerate}

Then, there exists a unique solution $f\in C([0,\infty);\mathcal{P}(\mathbb{T}^{2}\times\mathbb{R}^{2}))$
to \eqref{eq:Vlasov Equation}.
\end{thm}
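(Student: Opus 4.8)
The plan is to follow the classical scheme for global well-posedness of two-dimensional Vlasov--Poisson, in the form presented in \cite{GI21}: a local-in-time solution is produced by a fixed point argument at the level of the characteristic flow, it is made global by closing an a priori estimate on the force field, and uniqueness is obtained through a stability estimate of Loeper type. Throughout I would exploit the Lagrangian representation: if $(X_{(t,x,\xi)},\Xi_{(t,x,\xi)})$ denotes the flow of the characteristic system \eqref{eq:-3-1-1-1} (with $\varepsilon=1$), any solution satisfies $f(t,x,\xi)=f_0\big(X_{(t,x,\xi)}(0),\Xi_{(t,x,\xi)}(0)\big)$, so that $f(t,\cdot)$ is automatically a probability density with $\|f(t,\cdot)\|_{L^\infty}=\|f_0\|_{L^\infty}$, and the pointwise bound $f(t,x,\xi)\le\kappa_0/(1+|\Xi_{(t,x,\xi)}(0)|^{k_0})$ is propagated. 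The field is recovered from \Cref{green function on torus -1} as $\nabla\Phi(t,\cdot)=\nabla V\star(\rho(t,\cdot)-1)$, which turns \eqref{eq:Vlasov Equation} into a closed evolution for the pair $(f,\nabla\Phi)$; a Picard iteration on this pair, combined with the classical stability of the characteristic flow under perturbations of the field, yields a unique solution on a short interval, continuous in time with values in $\mathcal{P}(\mathbb{T}^2\times\mathbb{R}^2)$ for the Wasserstein distance.

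The crux is the a priori bound that turns the local solution into a global one. I would set $Q(t):=\int_0^t\|\nabla\Phi(s,\cdot)\|_{L^\infty}\,ds$, which controls the deviation of velocity characteristics through $|\Xi_{(t,x,\xi)}(0)-\xi|\le Q(t)$, exactly as in the proof of \Cref{bound on density }. Inserting the propagated pointwise bound on $f$ and integrating in $\xi$ over $\mathbb{R}^2$ --- this needs only $k_0>2$ for convergence in dimension two --- gives, as in that lemma, $\|\rho(t,\cdot)\|_{L^\infty}\lesssim_{\kappa_0}1+Q(t)^2$. Next, splitting the singular part of the kernel $\nabla V$ into $\{|x|\le l\}$ and $\{|x|\ge l\}$, optimising in $l$, using $\|\rho(t,\cdot)\|_{L^1}=1$, and absorbing the smooth remainder $\nabla V_0$ --- the argument of \Cref{ESTIMATE ON FORCE FIELD } with $\varepsilon=1$ --- yields $\|\nabla\Phi(t,\cdot)\|_{L^\infty}\lesssim 1+\|\rho(t,\cdot)\|_{L^\infty}^{1/2}\lesssim 1+Q(t)$. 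Hence $Q'(t)\lesssim 1+Q(t)$, and Grönwall's inequality gives $Q(t)\le C\,e^{Ct}<\infty$ for every $t\ge0$; the moment hypothesis $m_0>2$ enters here to ensure that velocity moments --- and hence the energy and the current $J^\varepsilon$ --- are finite and propagate, which is what makes the weak formulation meaningful. Since $\|\nabla\Phi\|_{L^\infty}$ is then locally bounded in time, the characteristic ODEs are globally solvable and the local solution extends to $[0,\infty)$.

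For uniqueness, let $f^1,f^2$ be two solutions with the same initial datum. By the estimate above both satisfy $\rho^i\in L^\infty_{\mathrm{loc}}([0,\infty);L^\infty(\mathbb{T}^2))$, and since $\nabla V(x)$ behaves like $x/|x|^2$ near the origin up to a $C^\infty$ term, each $\nabla\Phi^i(t,\cdot)$ is log-Lipschitz in $x$ with modulus controlled by $\|\rho^i(t,\cdot)\|_{L^\infty}$. I would then run Loeper's argument on the squared quadratic Wasserstein distance $q(t):=W_2^2(f^1_t,f^2_t)$: differentiating along an optimal coupling and combining the stability bound $\|\nabla\Phi^1-\nabla\Phi^2\|_{L^2}\lesssim \max_i\|\rho^i\|_{L^\infty}^{1/2}\,W_2(\rho^1,\rho^2)$ with the log-Lipschitz estimate on $\nabla\Phi^2$ produces a differential inequality of the form $q'(t)\lesssim q(t)\big(1+\log_+(1/q(t))\big)$; since $q(0)=0$, the Osgood criterion forces $q\equiv0$, i.e. $f^1=f^2$. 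Equivalently, one may run a direct Grönwall estimate on the distance between the two characteristic flows, again using the log-Lipschitz bound.

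The step I expect to be the main obstacle is closing the a priori estimate of the second paragraph: the feedback loop $Q\mapsto\|\rho\|_{L^\infty}\mapsto\|\nabla\Phi\|_{L^\infty}\mapsto Q$ closes only because, in dimension two, the Coulomb kernel is just barely integrable, so one must follow the constants carefully --- in particular the interaction with the torus correction $V_0$ and with the assumptions $k_0>2$, $m_0>2$ --- to rule out finite-time blow-up of $Q$. The companion subtlety is that the field is only log-Lipschitz, not Lipschitz, which is precisely why both the construction of the characteristic flow and the uniqueness proof must go through an Osgood-type argument rather than a plain Grönwall one.
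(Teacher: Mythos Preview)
Your proposal is correct, but your route to the global a priori bound differs from the paper's. You control $Q(t)=\int_0^t\|\nabla\Phi(s,\cdot)\|_{L^\infty}\,ds$ directly: Assumption~1 ($k_0>2$) plus the Lagrangian representation give $\|\rho(t,\cdot)\|_{L^\infty}\lesssim 1+Q(t)^2$, the kernel splitting gives $\|\nabla\Phi\|_{L^\infty}\lesssim 1+\|\rho\|_{L^\infty}^{1/2}$, and the loop closes at $Q'\lesssim 1+Q$. This is precisely the machinery the paper develops in Section~4.1 (\Cref{bound on density }, \Cref{ESTIMATE ON FORCE FIELD }, \Cref{propagation of the density }), specialised to $\varepsilon=1$. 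The appendix, by contrast, runs the Lions--Perthame moment method: it differentiates $M_k(t)=\iint|\xi|^k f\,dxd\xi$, bounds $\dot M_k\lesssim\|\nabla\Phi\|_{L^{k+2}}M_k^{(k+1)/(k+2)}$, and closes the estimate by using $\nabla V\in L^{2,\infty}(\mathbb{T}^2)$ together with the weak Young inequality and \Cref{lem:ineq_mk} to get $\|\nabla\Phi\|_{L^{k+2}}\lesssim M_k^{1/(k+2)}$; only afterwards does it invoke Assumption~1 to upgrade to $\rho\in L^\infty_{\mathrm{loc}}$ and apply Loeper. So the paper's a priori step rests primarily on Assumption~2 ($m_0>2$), whereas yours rests primarily on Assumption~1; in your argument the moment hypothesis is relegated to a side role (well-posedness of the weak formulation). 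Both approaches are valid and land on the same Loeper uniqueness criterion; yours is shorter and more in keeping with the body of the paper, while the paper's appendix is the more classical textbook presentation.
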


The assumptions on $f_0$ in the theorem above correspond to \textbf{H1-H2} in \Cref{main thm}. These assumptions are sufficient to define a unique weak solution to the Vlasov--Poisson system \cite{Batt Rein, Lions Perthame}, which also satisfies propagation of velocity moments. This is an important property which also implies propagation of regularity. Since we couldn't find a detailed proof of propagation of velocity moments for solutions to \eqref{eq:Vlasov Equation} on the two-dimensional torus, we write it below. 

\subsubsection*{Propagation of velocity moments}
We consider the velocity moment of a weak solution $f$ to \eqref{eq:Vlasov Equation} of order $k\geq 0$ defined by
\begin{equation}
    M_k(t)\coloneqq \iint_{\mathbb{T}^2 \times \mathbb{R}^2} \abs{\xi}^k f(t,x,v)dxd\xi.
\end{equation}
We also recall that we defined the partial moment for $k\geq 0$ by
\begin{equation*}
m_k(f)(x):=\int_{\mathbb{R}^2} \abs{\xi}^k f d\xi.
\end{equation*}
We will show that $M_k(t)$ is bounded for all time by showing it satisfies a Gr\"onwall inequality.
Before showing this, we recall a classical inequality for velocity moments, which is a generalization of \Cref{interpolation inequality }.
\begin{lem}\label{lem:ineq_mk}
	Let $0\leq k'\leq k < \infty$ and $r=\frac{k+2}{k'+2}$. 
	If $f\in L^\infty(\mathbb{T}^2 \times \mathbb{R}^2)$ with $M_k(f) < \infty$ then $m_{k'}(f)\in L^r(\mathbb{T}^2)$ and 
	
	\begin{equation}
	\norme{m_{k'}(f)}_r \leq c \norme{f}_\infty^{\frac{k-k'}{k+2}}M_k(f)^{\frac{k'+2}{k+2}}
	\end{equation} where $c=c(k,k',p)>0$.
\end{lem}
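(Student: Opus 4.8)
The plan is to establish the moment interpolation inequality of \Cref{lem:ineq_mk} by a direct computation that splits the velocity integral defining $m_{k'}(f)$ into the region where $\abs{\xi}$ is small and the region where it is large, optimizing the cutoff radius at the end. This is the standard Lions--Perthame type argument, and the exponents $r=\frac{k+2}{k'+2}$, $\frac{k-k'}{k+2}$, $\frac{k'+2}{k+2}$ are exactly what one gets from balancing the two contributions in dimension $d=2$.

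First I would fix $x\in\mathbb{T}^2$ and $R>0$ and write
\[
m_{k'}(f)(x)=\int_{\abs{\xi}\le R}\abs{\xi}^{k'}f\,d\xi+\int_{\abs{\xi}> R}\abs{\xi}^{k'}f\,d\xi.
\]
For the first term I bound $f\le\norme{f}_\infty$ and integrate $\abs{\xi}^{k'}$ over the ball of radius $R$ in $\mathbb{R}^2$, which gives a constant times $\norme{f}_\infty R^{k'+2}$. For the second term I write $\abs{\xi}^{k'}=\abs{\xi}^{k}\abs{\xi}^{k'-k}\le R^{k'-k}\abs{\xi}^{k}$ (using $k'\le k$, so the exponent $k'-k\le 0$), obtaining the bound $R^{k'-k}m_k(f)(x)$. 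Hence
\[
m_{k'}(f)(x)\le c\,\norme{f}_\infty R^{k'+2}+R^{k'-k}m_k(f)(x).
\]
Optimizing in $R$ — choosing $R$ so that the two terms balance, i.e. $R^{k+2}\sim m_k(f)(x)/\norme{f}_\infty$ — yields the pointwise bound
\[
m_{k'}(f)(x)\lesssim \norme{f}_\infty^{\frac{k-k'}{k+2}}\,m_k(f)(x)^{\frac{k'+2}{k+2}}.
\]
Then I raise both sides to the power $r=\frac{k+2}{k'+2}$, integrate in $x$ over $\mathbb{T}^2$, and note that $r\cdot\frac{k'+2}{k+2}=1$, so $\int_{\mathbb{T}^2} m_k(f)(x)^{r\cdot\frac{k'+2}{k+2}}dx=M_k(f)$; taking the $r$-th root gives the claimed inequality with a constant $c=c(k,k')$. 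The case $k'=k$ ($r=1$) is trivial, and the borderline constant when $m_k(f)(x)=0$ or $\norme{f}_\infty$-issues are handled by density/the convention that both sides vanish there.

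There is essentially no serious obstacle here; the only mild point of care is to check that the optimizing radius $R$ is admissible (it can be taken in $(0,\infty)$ for a.e.\ $x$ since $m_k(f)(x)<\infty$ for a.e.\ $x$, which follows from $M_k(f)<\infty$ by Fubini) and to track the constant's dependence, which is purely on $k,k'$ (and dimension, here fixed to $2$); the stray dependence on $p$ in the statement is a typo inherited from the general $L^p$-moment formulation and can be ignored. I would present this as a one-paragraph computation, referencing that it is the natural generalization of \Cref{interpolation inequality } obtained by replacing $\norme{f}_\infty^{k/(d+k)}M_k^{d/(d+k)}$ with the shifted exponents coming from interpolating $m_{k'}$ rather than $\rho=m_0$.
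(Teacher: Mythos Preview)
Your argument is correct and is precisely the standard Lions--Perthame splitting argument. Note that the paper does not actually give a proof of this lemma: it is merely recalled as a classical generalization of \Cref{interpolation inequality } (itself quoted from \cite{Griffin-Pickering Iacobelli 2023} without proof), so there is no ``paper's own proof'' to compare against; your computation is exactly the expected one and would serve as a complete proof.
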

First we notice that $M_k$ verifies the following differential inequality
\begin{align*}
\abs{\frac{d}{dt} M_k(t)} & =	\abs{\iint \abs{\xi}^k (-v\cdot\nabla_{x}f-\nabla_x \Phi\cdot \nabla_{v} f) d\xi dx}\\
& = \abs{\iint \abs{\xi}^k \mathop{\rm div_v}\left(\nabla_x \Phi f\right)d\xi dx} = \abs{\iint k\abs{\xi}^{k-2} v\cdot \nabla_x \Phi f d\xi dx}\\
& \leq \iint k\abs{\xi}^{k-1} f d\xi \abs{\nabla_x \Phi} dx \leq k \norme{\nabla_x \Phi(t)}_{k+2}\norme{\int_{\mathbb{R}^2} f(t,\cdot,\xi) d\xi}_{\frac{k+2}{k+1}}.
\end{align*}

By \Cref{lem:ineq_mk} with $k'=k-1$, we have
\begin{equation*}
    \norme{\int_{\mathbb{T}^2} \abs{\xi}^{k-1}f(t,\cdot,\xi) d\xi}_{\frac{k+2}{k+1}} \leq C M_k(t)^\frac{k+1}{k+2}.
\end{equation*}
Therefore we have 
\begin{equation*}
    \dot{M}_k(t) \leq k C \norme{\nabla_x \Phi(t)}_{k+2} M_k(t)^\frac{k+1}{k+2}.
\end{equation*}

To close the Gr\"onwall inequality, we aim to obtain $\norme{\nabla_x \Phi(t)}_{k+2} \leq M_k(t)^\frac{1}{k+2}$. Hence we need to study the electric field $\nabla_x \Phi$ more carefully. We recall that $\nabla_x \Phi= -\nabla_x G \ast \rho$ where $G$ is the Green function for the negative Laplacian on the torus. Indeed $G$ is the solution to $-\Delta G=\delta_0-1$ and is given by
\begin{equation*}
    G(x)=-\frac{1}{2\pi} \log\abs{x}+G_0(x)
\end{equation*}
where $G_0 \in C^\infty(\overline{B_{1/4}(0)})$.

Therefore $G$ has the same singularity as the Green function on the full space. Thus, $\nabla G $ belongs to the Lorentz space $\in L^{2,\infty}(\mathbb{T}^2)$. Hence, using the weak Young inequality we obtain
\begin{equation*}
    \norme{\nabla_x \Phi(t)}_{k+2}\leq C \norme{\rho(t)}_\frac{2k+4}{k+4}.
\end{equation*}

Now applying \Cref{lem:ineq_mk} with $k'=0$ we have
\begin{equation*}
    \norme{\rho(t)}_\frac{k+2}{2} \leq C M_k(t)^\frac{2}{k+2}.
\end{equation*}
Since $\frac{2k+4}{k+4} \leq \frac{k+2}{2}$ we have the following H\"older's inequality
\begin{equation*}
    \norme{\rho(t)}_\frac{2k+4}{k+4} \leq \norme{\rho(t)}_1^{1-\theta} \norme{\rho(t)}_{\frac{k+2}{2}}^{\theta},
\end{equation*}
and after carefully computing the exponents in the inequality we finally obtain $1-\theta=\theta=\frac{1}{2}$.
Therefore
\begin{equation*}
    \norme{\nabla_x \Phi(t)}_{k+2} \leq C \norme{\rho(t)}_\frac{2k+4}{k+4} \leq C \norme{\rho(t)}_\frac{k+2}{2}^\frac{1}{2} \leq C M_k(t)^\frac{1}{k+2}.
\end{equation*}
This inequality allows us to close the Gr\"onwall estimate and so $M_k(t)$ is bounded for all time. We notice that, once we obtain the propagation of velocity moments, we have a bound on the density that implies that the electric field is bounded. Once the electric field is bounded, Assumption 1 is verified for all times.
For more details see \cite{Lions Perthame, GI21, GI21bis, Griffin-Pickering Iacobelli 2023}.
\subsubsection*{Uniqueness}
Thanks to Assumption 1 in \Cref{thm:(Existence-for-Vlasov--Poisson)}, and to the propagation of moments, we can show that $\rho \in L^\infty_{loc}(\mathbb{R}^+;L^\infty(\mathbb{T}^2))$ for all times. Therefore, the uniqueness criterion established in Loeper's work \cite{L06} is verified, which concludes the proof of \Cref{thm:(Existence-for-Vlasov--Poisson)}.

\subsubsection*{Acknowledgments}

IBP was supported by the EPSRC grant number EP/V051121/1.
This work was also supported by the Advanced Grant Nonlocal-CPD (Nonlocal PDEs for Complex Particle Dynamics: Phase Transitions, Patterns and Synchronization) of the European Research Council Executive Agency (ERC) under the European Union's Horizon 2020 research and innovation programme (grant agreement No. 883363). 

AR was supported by the NCCR SwissMAP which was funded by the Swiss National Science Foundation grant number 205607. AR would like to thank the Swiss National Science Foundation for its financial support.

We would like to thank Pierre-Emmanuel Jabin for many stimulating discussions which have greatly contributed to this work, and to Matthew Rosenzweig for his interest in this work.  
We thank the anonymous referee for comments which improved the quality of this work.


\begin{thebibliography}{10}
\bibitem[1]{Bahouri et al} H. Bahouri, J.Y. Chemin and R. Danchin. \textit{Fourier
analysis and nonlinear partial differential equations.} Grundlehren
der mathematischen Wissenschaften. Vol. 343. (2011). 

\bibitem[2]{Batt Rein} J. Batt and G. Rein. \textit{Global classical solutions of the periodic Vlasov--Poisson system in three dimensions.} C. R. Acad. Sci. Paris S\'er. I Math., 313(6):411--416, (1991).

\bibitem[3]{Ben Porat} I. Ben-Porat. \textit{Derivation of Euler's equations of perfect
fluids from von Neumann's equation with magnetic field}. Journal of Statistical Physics (2023) Vol. 190:121. 

\bibitem[4]{Brenier}  Y. Brenier. \textit{Convergence of the Vlasov--Poisson
system to the incompressible Euler equations}. Comm. Partial Differential
equations Vol. 25 (3--4): 737--754 (2000). 

\bibitem[5]{Brenier Grenier} Y. Brenier and E. Grenier. \textit{Limite singuli\`ere du syst\`eme de Vlasov--Poisson dans le r\'egime de quasi neutralit\'e : le cas ind\'ependant du temps.} C. R. Acad. Sci. Paris S\'er. I Math., 318(2):121--124, (1994).

\bibitem[6]{B10} H. Br\'ezis. \textit{Functional Analysis,
Sobolev Spaces and Partial
Differential Equations}. Springer (2010).


\bibitem[7]{Crippa  Stefani} G. Crippa and G. Stefani. \textit{An elementary proof of existence and uniqueness for the
Euler flow in localized Yudovich spaces}. 	arXiv:2110.15648. 

\bibitem[8]{Delort} J.M. Delort. \textit{Existence de nappes de tourbillon en
dimension deux}. Journal of the American Mathematical Society, 4 (1991),
pp. 553--586. 

\bibitem[9]{Duerinckx} M. Duerinckx. \textit{Mean-field limits for some Riesz interaction
gradient flows}. SIAM Journal on Mathematical Analysis, 48 (2016),
pp. 2269--2300. 

\bibitem[10]{Duoandikoetxea} J. Duoandikoetxea. \textit{Fourier analysis}. American
Mathematical Society. 29 (2000). 

\bibitem[11]{Golse Saint Raymond} F. Golse and L. Saint-Raymond. \textit{The Vlasov--Poisson
system with strong magnetic field}. J. Math. Pures Appl. (9), 78(8):791--817 (1999).

\bibitem[12]{Grenier} E. Grenier. \textit{Defect measures of the Vlasov--Poisson system in the quasineutral regime.} Comm. Partial Differential Equations, 20(7-8):1189--1215, (1995).

\bibitem[13]{Grenier2} E. Grenier. \textit{Oscillations in quasineutral plasmas.} Comm. Partial Differential Equations, 21(3-4):363--394, (1996).

\bibitem[14]{GI20} M. Griffin-Pickering and M. Iacobelli. \textit{Singular Limits for Plasmas with Thermalised Electrons}. J. Math. Pures Appl., (9) 135 (2020), 199 - 255.

\bibitem[15]{GI21} M. Griffin-Pickering and M. Iacobelli. \textit{Recent developments on the well-posedness theory for Vlasov-type equations}. In From particle systems to partial differential equations, volume 352 of Springer Proc. Math. Stat., pages 301--319. Springer, Cham, 2021.


\bibitem[16]{GI21bis} M. Griffin-Pickering and M. Iacobelli. \textit{Global well-posedness for the Vlasov--Poisson system with massless electrons in the 3-dimensional torus}. Comm. Partial Differential Equations, 46(10):1892--1939, 2021.

\bibitem[17]{GI22} M. Griffin-Pickering and M. Iacobelli. \textit{Recent developments on quasineutral limits for Vlasov-type equations}. Recent advances in kinetic equations and applications, Springer INdAM Series, no. 48, (2022).


\bibitem[18]{Griffin-Pickering Iacobelli 2023} M. Griffin-Pickering and M. Iacobelli. \textit{Stability in quasineutral plasmas with thermalized electrons}. Preprint (2023).  

\bibitem[19]{Puel} M. Puel. \textit{Convergence of the Schr{\"o}dinger-Poisson system to the incompressible Euler equations.}  
Comm. Partial Differential Equations 27 (2002), no. 11-12, 2311-2331.

\bibitem[20]{Puel2} M. Puel. \textit{Convergence of the Schr{\"o}dinger-Poisson system to the Euler equations under the influence of a strong magnetic field.}  
Math. Model. Numer. Anal. 36 (2002), no. 6, 1071-1090.

\bibitem[21]{Puel-S.Raymond}  M. Puel and L. Saint-Raymond. \textit{Quasineutral limit for the relativistic Vlasov-Maxwell system}. Asymptotic Analysis, vol. 40, no. 3-4, pp. 303-352, 2004. 


\bibitem[22]  {Han-Kwan} D. Han-Kwan. \textit{Quasineutral limit of the Vlasov-Poisson system with massless electron}.
Comm. Partial Differential Equations, 36(8):1385-1425, 2011.

\bibitem[23]{Han-Kwan Iacobelli} D. Han-Kwan and M. Iacobelli. \textit{From Newton's
second law to Euler's equations of perfect fluids. }Proc. Amer. Math.
Soc., Vol. 149 (7): 3045--3061 (2021).

\bibitem[24]{Jonsson} A. Jonsson, P. Sj{\"o}gren and H. Wallin. \textit{ Hardy and Lipschitz spaces on subsets of $\mathbb{R}^{n}$}. Studia Mathematica Volume: 80, Issue: 2, page 141-166, (1984). 

\bibitem[25]{Lions Perthame} P. L. Lions and B. Perthame\textit{. Propagation
of moments and regularity for the 3-dimensional Vlasov--Poisson system}.
Inventiones mathematicae Vol. 105, pp 415--430 (1991). 

\bibitem[26]{Lions} P.-L. Lions. \textit{Mathematical topics in fluid mechanics}.
Vol. 1. Incompressible models, Oxford Lecture Series in Mathematics and its Applications, Oxford University Press, New York, 1996.

\bibitem[27]{L06} G. Loeper. \textit{Uniqueness of the solution to the Vlasov--Poisson system with bounded density}. J. Math. Pures Appl. (9), 86(1):68--79, (2006).

\bibitem[28]{Majda Bertozzi} A. J. Majda and A. L. Bertozzi\textit{. Vorticity
and Incompressible Flow}. Vol. 27 of Cambridge Texts in Applied
Mathematics. Cambridge University Press (2002). 

\bibitem[29]{Masmoudi} N. Masmoudi. \textit{From Vlasov--Poisson system to the incompressible Euler system.} Comm. Partial Differential Equations, 26(9-10), (2001).


\bibitem[30]{Pfaffelmoser} K. Pfaffelmoser. \textit{Global classical solutions of the Vlasov--Poisson system in three dimensions for general initial data.} J. Differential Equations, 95(2):281--303, (1992).

\bibitem[31] {Rosenzweig low reg} M. Rosenzweig. \textit{Mean-field convergence of point vortices to the incompressible Euler equation with vorticity in $L^\infty$}. Arch. Ration. Mech. Anal. Vol. 243 (2022), no. 3, 1361-1431. 

\bibitem[32]{Rosenzweig Quantum} M. Rosenzweig. \textit{From Quantum Many-Body Systems
to Ideal Fluids. }arXiv:2110.04195 (2021). 


\bibitem[33]{Rudin} W. Rudin. \textit{Real and Complex Analysis}. Mc Graw Hill, Singapore, 1986.

\bibitem[34]{Saint-Raymond} L. Saint-Raymond. \textit{Control of large velocities in
the two-dimensional gyrokinetic approximation}. J. Math. Pures Appl. 81 (2002) 379--399. 

 
\bibitem[35]{Serfaty}  S. Serfaty. \textit{Mean field limit for Coulomb-type
flows}. (Appendix in collaboration with Duerinckx, M.) Duke Math.
J. 169, 2887--2935 (2020).

\bibitem[36]{Stein} E. M. Stein. \textit{Harmonic analysis: real-variable
methods, orthogonality, and oscillatory integrals}, With the assistance
of Timothy S. Murphy; Monographs in Harmonic Analysis, III, Princeton
Mathematical Series, vol. 43, Princeton University Press, Princeton,
NJ, 1993.

\bibitem[37]{Yudovich} V. Yudovich. \textit{Non-stationary flow of an ideal incompressible
liquid}. USSR Computational Mathematics and Mathematical Physics, 3
(1963), pp. 1407 -- 1456.

\bibitem[38]{Zygmund} A. Zygmund. \textit{Trigonometric Series}. Cambridge University Press; 3rd edition.
\end{thebibliography}
\end{document}